\theoremstyle{plain}
\newtheorem{theorem}{Theorem}
\newtheorem*{question}{Question}
\newtheorem*{questions}{Questions}
\numberwithin{equation}{section}
\newtheorem{proposition}[equation]{Proposition}
\newtheorem{lemma}[equation]{Lemma}
\newtheorem{corollary}[equation]{Corollary}
\theoremstyle{definition}
\newtheorem{remark}[equation]{Remark}
\newtheorem{example}[equation]{Example}
\newcommand {\printname}[1] {}
\def    \R  {{\mathbb R}}
\def    \Z  {{\mathbb Z}}
\def    \F  {{\mathbb F}}
\def    \CP {{\mathbb {CP}}}
\def    \P {{\mathbb {P}}}
\def    \C  {{\mathbb C}}
\def    \N  {{\mathbb N}}
\def    \deg  {{\operatorname{deg}}}
\def    \Hom  {{\operatorname{Hom}}}
\def    \Ext  {{\operatorname{Ext}}}
\def    \even  {{\operatorname{even}}}
\def    \rank  {{\operatorname{rank}}}
\def    \torsion  {{\operatorname{torsion}}}
\def    \lot  {{\operatorname{\ lower\ order\ terms }}}
\def    \Tilde  {\widetilde}
\def    \Hat  {\widehat}
\def    \Mh     {\Hat{M}}
\def    \oh     {\Hat{\omega}}
\def    \Fh     {\Hat{F}}
\def    \ph     {\Hat{\phi}}
\def    \at     {\Tilde{\alpha}}
\def    \bt     {\Tilde{\beta}}
\def    \mt     {\Tilde{\mu}}
\def    \ut     {\Tilde{u}}
\def    \xt     {\Tilde{x}}
\def    \yt     {\Tilde{y}}
\def    \Gt     {\Tilde{G}}
\begin{document}
\title{Hamiltonian circle actions with minimal fixed sets}

\author{Hui Li and Susan Tolman}
\address{School of Mathematical Sciences, Box 173\\
        Suzhou University\\
        Suzhou, 215006, China.}
        \email{hui.li@suda.edu.cn}

\address{Department of mathematics\\
        University of Illinois at Urbana-Champaign\\
        Urbana, IL, 61801, USA.}
\email{stolman@math.uiuc.edu}

\thanks{2010 classification. Primary$\colon$ 53D05, 53D20; Secondary$\colon$
55N25, 57R20.}
\keywords{Symplectic manifold, Hamiltonian circle
action, moment map, symplectic quotient, equivariant cohomology,
Chern classes}
\thanks{The second author was partially supported by National Science Foundation
Grant DMS \#07-07122.}
\begin{abstract}
Consider an  effective  Hamiltonian circle action
on a compact symplectic $2n$-dimensional manifold $(M, \omega)$.
Assume that the fixed set $M^{S^1}$ is {\em minimal},
in two senses: it has exactly two components,
$X$ and $Y$, and  $\dim(X) + \dim(Y) = \dim(M) - 2$.

We prove that the integral
cohomology ring and Chern classes of $M$ are isomorphic to either
those of $\CP^n$ or (if $n \neq 1$ is odd) to those of $\Gt_2(\R^{n+2})$,  the Grassmannian of oriented
two-planes in $\R^{n+2}$.
In particular,
$H^i(M;\Z) = H^i(\CP^n;\Z)$ for all $i$, and
the Chern classes of $M$ are determined by the integral
cohomology {\em ring}.
We also prove that the fixed set data
of $M$ agrees exactly with the fixed set data for one of the standard circle actions on
one of these two manifolds.
In particular, we show that there are no points
with stabilizer $\Z_k$ for any $k > 2$.

The same  conclusions hold when $M^{S^1}$ has exactly two components
and the even Betti numbers of $M$ are minimal, that is,
$b_{2i}(M) = 1$ for all $i \in  \left\{0,\dots,\frac{1}{2}\dim(M)\right\}$.
This provides additional evidence
that very few symplectic manifolds with minimal even Betti numbers
admit Hamiltonian actions.

\end{abstract}
 \maketitle

\section{Introduction}

Let $M$ be a connected compact smooth manifold, and let $G$ be a
connected compact Lie group.
One fundamental question is whether $M$ admits a $G$ action, and -- if
so -- how many different actions can occur.

In the early 1970's, Petrie addressed this  question by studying
{\em homotopy complex projective spaces}, spaces which are homotopy
equivalent to $\CP^n$. He conjectured  that if a homotopy complex
projective space $M$ admits a circle action\footnote{ Unless we
specify otherwise, we shall always assume that our actions are
non-trivial.}, then the total Pontryagin class $p(M)$ of $M$ agrees
with $p(\CP^n)$ \cite{P1}.  If true, this conjecture would imply
that up to diffeomorphism, there are only finitely many homotopy
complex projective spaces which admit a circle action  for all $n
\neq 2$. Although Petrie's conjecture is still open, it has been
proved in many  special interesting cases, including when the fixed
point set  $M^{S^1}$ contains at most four connected components
\cite{W, Yo76, TsWa, Ma1}. In contrast, a circle action on a complex projective space may not be
equivariantly diffeomorphic to one of the standard actions on $\CP^n$;
Petrie was able to construct {\em exotic} circle actions on $\CP^3$, that is, actions whose normal
representations at the fixed points do not agree with those of any
of the actions induced by a circle subgroup $S^1 \subset SU(4)$
\cite{P2}.

We are interested in addressing the analogous questions for
symplectic manifolds. Because extra tools are available in this
case, we hope to be able to relax the restrictions on $M$ and still
draw stronger conclusions. More precisely, let the circle $S^1$ act
 on a compact symplectic manifold $(M,\omega)$ with moment map
$\phi \colon M \to \R$. Since $M$ is compact and symplectic, $
H^{2i}(M;\R) \neq 0$ for all $i$ such that $0 \leq 2i \leq \dim(M).$
 Since $\phi$ is a
Morse-Bott function and its critical set is the fixed set $M^{S^1}$,
this implies that
\begin{equation*}
\sum_{F \subset M^{S^1}} \left( \dim(F)  + 2 \right) \geq \dim(M) + 2,
\end{equation*}
where the sum is over all fixed components. We say that the fixed
components have {\bf minimal dimension} if
\begin{equation*}
\sum_{F \subset M^{S^1}} \left( \dim(F)  + 2 \right) =
\dim(M) + 2.
\end{equation*}
A priori, this assumption  does not seem terribly restrictive. For
example, if $H^*(M;\R) = H^*(\CP^n;\R)$, then $M$ has fixed
components of minimal dimension, but the converse is false; see
Lemma~\ref{index} and Remark~\ref{compare}. Nevertheless, we are
interested in answering the following questions:

\begin{questions}\label{q1}
Consider a Hamiltonian circle action on a compact symplectic
manifold $(M,\omega)$ with fixed components of minimal dimension.
\begin{itemize}
\item [(a)] Is $b_{2i+1}(M) = 0$ for all $i$? Is $H^*(M;\Z)$ torsion free?
\item [(b)] Can we list all possible cohomology rings $H^*(M;\Z)$?
\item [(c)] Are the Chern classes\footnote{See
\S\ref{background} for the definition of the Chern classes of a symplectic manifold.}
$c_i(M)$  determined by the
cohomology ring $H^*(M;\Z)$?
 \item [(d)] Do the normal representations at the fixed points always agree with those
of some ``standard" circle actions?
\end{itemize}
\end{questions}

In the case that $M$ is $6$-dimensional, all four questions are
answered affirmatively in \cite{T}; see Remark~\ref{compare}.
Partial results also follow from \cite{Dej, Yo77, Ma3, Ah, AO}.
Additionally, if $M$ is $6$-dimensional and the fixed set is
discrete then $M$ is equivariantly symplectomorphic to some
well-known K\"ahler example with additional symmetries \cite{Mc}.
Similarly, Morton has answered all four questions affirmatively
whenever a torus $T$ acts on $M$ so that $M$ is a GKM space and
$\dim T \geq \frac{1}{4} \dim M$ \cite{Mo}. Finally, under the
above assumptions, let $k$ be the largest integer
that divides $c_1(M) \in H^2(M;\Z)$. If the fixed set is discrete
then \cite{Mu, Ha} imply that  $k \leq n + 1$; moreover, if $k =
n+1$ or if  $k = n$ and $n \neq 1$ is odd, then the normal representations
at the fixed points agree with those of one of the standard actions
on $\CP^n$ or $\Tilde G_2(\R^{n+2})$, the Grassmannian of oriented
two-planes in $\R^{n+2}$, respectively.

In this paper, we answer
the questions above
in the case that the fixed set has
the smallest possible number of components -- two.
More concretely, we assume that $M^{S^1}$ has exactly two components,
$X$ and $Y$, and that $\dim(X) + \dim(Y) = \dim(M) - 2$.
It is easy to see that $\CP^n$ admits a Hamiltonian circle action satisfying these assumptions;
if $n \neq 1$ is odd then $\Tilde G_2(\R^{n+2})$ does as well;
see Examples~\ref{ex1} and \ref{ex2}.

Our first main theorem is that, under the above assumptions,
the integral cohomology ring and
the total Chern class of $M$ are isomorphic to those of one of these
two manifolds. (By Proposition~\ref{C}, the same claims hold for the
{\em equivariant} cohomology ring and total Chern class.)

\begin{theorem}\label{A}
Let the circle  act in a Hamiltonian fashion on a  compact $2n$-dimensional
symplectic manifold $(M, \omega)$.
Assume that  $M^{S^1}$
has exactly two components, $X$ and $Y$,
and that $\dim(X) + \dim(Y) = \dim(M) - 2$.
Then one of the following is true:
\begin{itemize}
\item [(A)]$H^*(M; \Z)=\Z[x]/(x^{n + 1})$ and
$c(M)=(1+x)^{n + 1}$; or
\item [(B)] $n \neq 1$
is odd, $H^*(M;\Z) =
\Z[x,y]/\big(x^{\frac{1}{2}(n+1) } - 2y, y^2\big)$,
and \\
$c(M) = \frac
{(1+x)^{n + 2}}
{1+2x}
 .$
\end{itemize}
In both cases, $x$ has degree $2$; in case (B),
$y$ has degree $n +1$.
\end{theorem}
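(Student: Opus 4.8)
The plan is to read the cohomology ring and Chern classes off the Morse theory of the moment map $\phi$, using symplectic reduction and equivariant localization to control the underlying geometry. Relabel so that $X=\phi^{-1}(\min\phi)$ and $Y=\phi^{-1}(\max\phi)$, and set $\dim X=2k$, so that $\dim Y=2n-2k-2$ and the Morse--Bott index of $Y$ is $2k+2$. Since $\phi$ is a perfect Morse--Bott function,
\begin{equation*}
P_t(M)=P_t(X)+t^{\,2k+2}\,P_t(Y),
\end{equation*}
the two summands being supported in the disjoint degree ranges $\{0,\dots,2k\}$ and $\{2k+2,\dots,2n\}$. Comparing lowest and highest degree terms and applying Poincar\'e duality on $M$, on $X$ and on $Y$ shows that $X$ and $Y$ are connected and that $b_2(X)=b_2(Y)=b_2(M)$; the integral Morse--Bott decomposition also shows $H^*(M;\Z)$ is torsion free as soon as $H^*(X;\Z)$ and $H^*(Y;\Z)$ are.

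The heart of the matter is to prove that $b_{2i}(M)=1$ for all $i$ --- equivalently, by the disjoint-support formula, that $b_{2i}(X)=b_{2i}(Y)=1$ and all odd Betti numbers of $X$ and $Y$ vanish --- that every isotropy group is trivial or $\Z_2$, and that the full fixed-point data coincides with that of $\CP^n$ or of $\Gt_2(\R^{n+2})$. The tool is a regular level set: since $\phi$ has no critical value strictly between its extrema, the reduced space $M_{\mathrm{red}}=\phi^{-1}(c)/S^1$ is a closed symplectic orbifold of dimension $2n-2$ that is at once the (possibly weighted) projectivization of the normal bundle $N_X$ of $X$ and of the normal bundle $N_Y$ of $Y$, hence a $\CP^{\,n-k-1}$-orbifold bundle over $X$ and a $\CP^{\,k}$-orbifold bundle over $Y$. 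Comparing the two resulting Leray--Hirsch descriptions of $H^*(M_{\mathrm{red}};\Q)$, feeding in $b_2(X)=b_2(Y)=b_2(M)$, and using that the weights of $N_X$ are positive integers with no common factor, one forces the Betti numbers of $X$ and $Y$ down to those of projective spaces and simultaneously bounds the weights; tracking how the fiber class of one bundle restricts to the fibers of the other couples the appearance of a weight $2$ to the symmetric case $k=n-k-1$ (which requires $n$ odd) and excludes weights larger than $2$. Along the way one carries the submanifolds $M^{\Z_m}\subset M$ for $\Z_m\subset S^1$ --- each a Hamiltonian $S^1/\Z_m$-manifold with fixed set inside $X\cup Y$ --- to rule out stray isotropy.

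Granting this, $H^*(M;\Z)$ is free abelian with exactly one generator in each even degree and none in odd degrees, and restriction gives a surjective ring map $H^*(M)\to H^*(X)=\Z[a]/(a^{k+1})$. Choosing $x\in H^2(M)$ mapping to $a$, one finds $x^i$ generates $H^{2i}(M)$ for $0\le i\le k$; the passage across the degree-$(2k+2)$ gap uses the canonical equivariant Morse class $\tau_Y\in H^{2k+2}_{S^1}(M)$ with $\tau_Y|_Y=e^{S^1}(N_Y)$ and $\tau_Y|_X=0$, whose image together with $x$ generates the ring. The single remaining unknown --- whether $x^{k+1}$ is a generator of $H^{2k+2}(M)$ or twice one --- is decided by an Atiyah--Bott--Berline--Vergne computation: with $\widetilde{x}$ an equivariant extension of $x$,
\begin{equation*}
\int_M \widetilde{x}^{\,n}=\int_X\frac{\bigl(\widetilde{x}|_X\bigr)^{n}}{e^{S^1}(N_X)}+\int_Y\frac{\bigl(\widetilde{x}|_Y\bigr)^{n}}{e^{S^1}(N_Y)},
\end{equation*}
which equals $1$ when all weights are $1$, giving ring (A), and $2$ when at each fixed component one weight is $1$ and the rest are $2$, giving ring (B) with $n$ odd. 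Running the same localization on the equivariant total Chern class --- whose restrictions are $c^{S^1}(TX)\,c^{S^1}(N_X)$ and $c^{S^1}(TY)\,c^{S^1}(N_Y)$ --- together with the constraint that it lies in $H^*_{S^1}(M)$, pins down the Chern classes of the fixed components and of their normal bundles, and hence $c(M)$, yielding $(1+x)^{n+1}$ in case (A) and $(1+x)^{n+2}/(1+2x)$ in case (B).

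The main obstacle is the second step. Numerical bookkeeping with Betti numbers cannot by itself force $b_2(M)=1$: there are formal Poincar\'e polynomials obeying the Morse and Poincar\'e-duality constraints that come from neither model --- for instance the one built from $X=Y=(\CP^1)^{(n-1)/2}$, which is palindromic with $b_n=0$ --- so one genuinely needs the ring structure of the reduced space and the integrality of localization, organized uniformly over all $k$ and all a priori possible isotropy patterns, to eliminate the spurious cases and then to match the fixed-point data exactly to one of the two models. That is where essentially all the work lies.
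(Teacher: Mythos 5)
Your overall architecture matches the paper's in spirit --- reduce everything to the fixed-point data, exploit the fact that a regular reduced level is simultaneously a (weighted) projectivization of $N_X$ and of $N_Y$, then recover $H^*(M;\Z)$ and $c(M)$ by localization --- but the decisive middle step is not actually proved, and you concede as much (``that is where essentially all the work lies''). Nothing in the proposal forces $H^*(X;\Z)=\Z[u]/u^{k+1}$ and $H^*(Y;\Z)=\Z[v]/v^{n-k}$, rules out stabilizers $\Z_m$ with $m>2$, or pins down the weight pattern (all weights $1$; or $\dim X=\dim Y$ with one weight $1$ and the rest $2$) and the normal-bundle Chern classes. ``Comparing the two Leray--Hirsch descriptions'' of $H^*(M_{\mathrm{red}};\Q)$ only yields the Poincar\'e-polynomial identity $P_t(X)\,(1+t^2+\cdots+t^{2(n-k-1)})=P_t(Y)\,(1+t^2+\cdots+t^{2k})$, which --- as your own $(\CP^1)^{(n-1)/2}$ example shows --- is consistent with spurious cases; the ring-level and integrality arguments that would exclude them are exactly what is missing. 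Since your final ABBV computations of $\int_M\widetilde{x}^{\,n}$ and of $c(M)$ require this fixed-point data (the weights and the equivariant Euler and Chern classes of $N_X$, $N_Y$) as input, the conclusion is conditional on the unproven step. (A smaller slip: $b_2(X)=b_2(Y)=b_2(M)$ fails when a fixed component is a point, e.g.\ $k=0$.)

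For comparison, the paper fills this gap with a chain of specific arguments you would need analogues of: an equivariant Poincar\'e-duality argument (Lemma~\ref{betti0}, via Proposition~\ref{implies}) showing the minimal fixed component satisfies $H^*(X;\F)=\F[u]/u^{k+1}$ whenever an equivariant extension of $[\omega|_X]$ is nonzero at the other fixed points; the explicit formula for $e^{S^1}(N_X)$ (Lemma~\ref{euler}) combined with Lemma~\ref{qchern}, which converts $c^{S^1}(N_X)$ into $c(X)$; an Euler-characteristic-plus-primitivity/integrality computation forcing $m=\phi(Y)-\phi(X)$ to equal $1$ in the semifree case (Proposition~\ref{case1}) or $2$ otherwise (Proposition~\ref{case2}), which is what makes the integral ring statements and $c(X)=(1+u)^{k+1}$ come out; a separate isotropy-submanifold analysis (Lemmas~\ref{I}, \ref{modulo}, \ref{distinctweights}, \ref{r=1}, \ref{1stChern}, \ref{alpha} and Proposition~\ref{case3}) excluding stabilizers $\Z_m$ for $m>2$ and locating $M^{\Z_2}$ in codimension two; and Lemma~\ref{Q} to get torsion-freeness of $H^*(M^{S^1};\Z)$ in the non-semifree case, after which Theorem~\ref{A} follows from the fixed-point data as in Section~\ref{sec:calc}. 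Your proposal names the right objects and the right general tools, but contains none of these arguments, so as written it is an outline in which the theorem's actual content is left open.
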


Our second main theorem is that the fixed set data
of $M$ agrees exactly with the fixed set data for one of the standard circle actions on
one of these two manifolds.
Here, the {\bf fixed set data} is
the integral cohomology ring and the total
Chern class of each fixed component, the set of weights for the
normal representation at each fixed component, and for each  weight
$k$, the total Chern class of the subbundle (of the normal bundle of
the fixed component)  on which $S^1$ acts with weight $k$.

\begin{theorem}\label{B}
Let the circle  act effectively\footnote{
A group $G$ acts {\bf effectively} on $M$ if for every non-trivial
$g \in G$, there exists $m \in M$ so that $g \cdot m \neq m$.}
on a  compact symplectic manifold
$(M, \omega)$  with moment map $\phi \colon M \to \R$.
Assume that $M^{S^1}$  has
exactly two components,  $X$ and $Y$, and
that $\dim(X) + \dim(Y) = \dim(M) - 2.$
Then
\begin{gather*}
H^*(X;\Z) = \Z[u]/u^{i  + 1} \ \  \mbox{and} \ \  c(X) = (1+u)^{i+1},
\quad \mbox{where} \  \dim(X) = 2i; \\
H^*(Y;\Z) = \Z[v]/v^{j + 1} \ \  \mbox{and} \ \  c(Y) = (1+v)^{j+1},
\quad \mbox{where} \  \dim(Y) = 2j.
\end{gather*}
Moreover, one of the following is true:
\begin{itemize}
\item  [(A)] The action is semifree\footnote{
A circle action is {\bf semifree} if
the action is free outside the fixed point set.},
\begin{gather*}
c(N_X) = (1+u)^{j + 1},
\quad \mbox{and} \quad
c(N_Y) = (1+v)^{i + 1},
\end{gather*}
where $N_X$ and $N_Y$ are the normal bundles to $X$ and $Y$, respectively.
\item [(B)] The action is not semifree,
but  no point has stabilizer $\Z_k$ for any  $k > 2$; moreover,
\begin{align*}
\dim(X) = \dim(Y)\geq 2 &\quad \mbox{and} \quad \dim(M^{\Z_2}) = \dim(M) - 2; \\
c\big(N_{M^{\Z_2}}\big)\big|_X = 1+ u &\quad \mbox{and} \quad
c\big(N_{M^{\Z_2}}\big)\big|_Y = 1 + v; \\
c\big(N_X^{M^{\Z_2}}\big) = \frac
{(1+u)^{i + 1}} {1 + 2u }
&\quad \mbox{and} \quad
c\big(N_Y^{M^{\Z_2}}\big) = \frac
{(1+v)^{i + 1}} {1 + 2v},
\end{align*}
where
$M^{\Z_2}$ denotes the submanifold fixed by $\Z_2$,
$N_{M^{\Z_2}}$ denotes the
normal bundle of $M^{\Z_2}$ in $M,$
and $N_X^{M^{\Z_2}}$ and $N_Y^{M^{\Z_2}}$ denote the normal bundles of $X$ and of $Y,$
respectively, in $M^{\Z_2}.$
\end{itemize}
\end{theorem}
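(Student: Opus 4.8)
The plan is to reduce everything to a careful study of the two fixed components and of the isotropy stratification near them. Normalize the moment map so that $\phi(X) = \min\phi$ and $\phi(Y)=\max\phi$, and write $\dim(X)=2i$, $\dim(Y)=2j$, so that $i+j=n-1$. Since $\phi$ is Morse--Bott with critical set exactly $X\sqcup Y$, there are no critical values strictly between $\phi(X)$ and $\phi(Y)$; hence $M\setminus Y$ deformation retracts onto $X$, $M\setminus X$ onto $Y$, and every reduced space $M_{\mathrm{red}}$ taken at an intermediate regular value is, up to diffeomorphism, independent of the level --- it is identified with the (possibly weighted) projectivization $\P(N_X)$ of the normal bundle to $X$, and equally with $\P(N_Y)$. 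Counting isotropy weights, $N_X$ splits into $S^1$-eigenbundles carrying $j+1$ positive weights in total, and $N_Y$ into $i+1$ negative weights.

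First I would pin down the cohomology. Using the Thom--Gysin long exact sequences of the pairs $(M,M\setminus X)$ and $(M,M\setminus Y)$, together with Poincar\'e duality on $M$, $X$ and $Y$, and the Leray--Hirsch description of $H^*(M_{\mathrm{red}})$ coming from the two projective-bundle structures, one shows that $H^*(M;\Z)$, $H^*(X;\Z)$ and $H^*(Y;\Z)$ are torsion free, vanish in odd degrees, and have all even Betti numbers equal to $1$; moreover the ring structures are forced to be $H^*(X;\Z)=\Z[u]/u^{i+1}$ and $H^*(Y;\Z)=\Z[v]/v^{j+1}$, with $u,v$ the restrictions of a generator of $H^2(M;\Z)$. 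This establishes the hypotheses of Theorem~\ref{A}; conversely, once $c(M)$ is known, the Chern classes $c(X)=(1+u)^{i+1}$ and $c(Y)=(1+v)^{j+1}$ drop out of the multiplicativity $c(M)|_X=c(X)\,c(N_X)$ (and likewise on $Y$).

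Next I would analyze the weights and stabilizers by localization. Applying the Atiyah--Bott--Berline--Vergne formula to the equivariant extensions of $x$, of $\omega$, and of the total Chern class $c(M)$ yields a system of polynomial identities relating the weights of $S^1$ on $N_X$ and $N_Y$ and the Chern roots of these bundles (which live in the rank-one rings $H^*(X)$, $H^*(Y)$) to the cohomology and Chern data of $M$. Combining these identities with the positivity built into the moment map --- all weights at $X$ positive, all at $Y$ negative --- one must eliminate every configuration except two: either all weights at $X$ equal $1$ and all at $Y$ equal $-1$, so the action is semifree (the situation of Proposition~\ref{case1}); or else $\dim(X)=\dim(Y)$, exactly one eigendirection at each of $X$, $Y$ carries weight $\pm 2$, and no point has stabilizer $\Z_k$ with $k>2$, in which case $\dim(M^{\Z_2})=\dim(M)-2$ with $N_{M^{\Z_2}}$ the weight-$2$ subbundle (the situation of Proposition~\ref{case2}). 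In the first case, the identification $M_{\mathrm{red}}=\P(N_X)=\P(N_Y)$ together with the ring structure already determined forces $c(N_X)=(1+u)^{j+1}$ and $c(N_Y)=(1+v)^{i+1}$. In the second, one runs the analogous but finer computation, tracking separately the weight-$1$ subbundle $N_X^{M^{\Z_2}}\subset N_X$ and the weight-$2$ line bundle $N_{M^{\Z_2}}|_X$ and using that $M^{\Z_2}$ carries its own induced Hamiltonian circle action with the same two fixed components; this produces $c(N_{M^{\Z_2}})|_X=1+u$ and $c(N_X^{M^{\Z_2}})|_X=(1+u)^{i+1}/(1+2u)$, and symmetrically on $Y$.

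I expect the main obstacle to be the weight-elimination step. The localization identities form a polynomial system that at first sight is badly underdetermined, and obtaining the full rigid classification --- rather than merely matching Euler characteristics or first Chern numbers --- requires exploiting simultaneously the positivity of the weights, the rank-one structure of $H^*(X)$ and $H^*(Y)$, and most likely an induction on $\dim(M)$ (passing to reduced spaces, or to $M^{\Z_2}$, which again has a Hamiltonian circle action with two fixed components). This is precisely the delicate bookkeeping that is packaged into Propositions~\ref{case1} and~\ref{case2}; everything else is formal homotopy-theoretic and Chern--Weil manipulation.
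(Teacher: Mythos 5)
There is a genuine gap: your proposal defers precisely the content of the theorem. The ring structure $H^*(X;\Z)=\Z[u]/u^{i+1}$, torsion-freeness, and vanishing of odd cohomology do \emph{not} follow formally from Thom--Gysin sequences, Poincar\'e duality, and Leray--Hirsch on the two projective-bundle structures of the reduced space. A priori $X$ is an arbitrary symplectic manifold of dimension $2i$, and ruling out extra even classes, odd classes, and torsion over $\Z$ is where most of the work in the paper lies: over a field one needs the argument of Lemma~\ref{betti0} (built on the equivariant class of Lemma~\ref{uvt} and duality), integrality and the exact generator require the Euler-characteristic computation that pins down $\phi(Y)-\phi(X)$ (the step $m=1$, resp.\ $m=2$, in Propositions~\ref{case1} and \ref{case2}), and killing torsion in the non-semifree case requires the mod~$p$ analysis of Lemmas~\ref{Q}, \ref{torsion}, and \ref{torsion-Grass} applied to $M^{\Z_2}$. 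Your second paragraph then openly states that the weight/stabilizer elimination --- semifree versus the $\Z_2$ case, $\dim(X)=\dim(Y)$, no stabilizer $\Z_k$ with $k>2$, and the normal-bundle Chern classes --- is ``packaged into Propositions~\ref{case1} and \ref{case2}''; but Theorem~\ref{B} \emph{is} those two propositions, so nothing is actually proved. The paper's route to that classification is not ABBV localization on the fixed components (which it explicitly contrasts with \cite{T}), but the reduced-space isomorphism of Proposition~\ref{TW}/Corollary~\ref{TW'} applied to isotropy submanifolds: this gives Lemma~\ref{i<j}, the arithmetic Lemmas~\ref{I} and \ref{modulo} give Proposition~\ref{distinctweights}, and the tension between Lemma~\ref{r=1} and Lemmas~\ref{1stChern}, \ref{alpha} yields Proposition~\ref{case3}. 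None of these steps, nor substitutes for them, appear in your sketch.

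There is also a circularity in how you propose to obtain the Chern classes: you want to read off $c(X)$ and $c(N_X)$ from $c(M)$ via $c(M)|_X=c(X)\,c(N_X)$, but in this paper $c(M)$ (Theorem~\ref{A}) is a \emph{consequence} of the fixed-point data of Theorem~\ref{B}, not an input; and even granting $c(M)$, the restriction only gives the product $c(X)\,c(N_X)$, which does not split without further information. The paper instead determines $c(X)$ directly from the reduced-space structure via Lemma~\ref{qchern} together with the Euler-class formula of Lemma~\ref{euler}, and then uses the resulting Euler characteristic to close the integrality loop. So while your opening picture (two fixed components, reduced space fibering over both) is the right starting point, the proposal as written asserts rather than proves every step at which the theorem could fail.
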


\begin{example}\label{ex1}
Given $n \geq 1$, let $\CP^n$ denote the complex projective space.
Since this $2n$-dimensional manifold naturally arises as a coadjoint
orbit of $SU(n+1)$, it inherits a symplectic form $\omega$ and a
Hamiltonian $SU(n+1)$ action.

Thus, for any $j \in \{0,\dots,n-1\}$,
there is a semifree Hamiltonian  circle action given by
$$\lambda\cdot [z_0, z_1, ..., z_n]=[\lambda z_0, \lambda z_1, ...,
\lambda z_j, z_{j+1}, ..., z_{n}].$$
The fixed set $(\CP^n)^{S^1}$ consists of two components:
\begin{align*}
\left\{ [z] \in \CP^n
\mid z_k = 0 \ \forall \ k  \leq j \right\} &\simeq \CP^{n-j-1}
\quad  \mbox{and} \\
\left\{ [z] \in \CP^n
\mid z_k = 0 \ \forall \ k  > j \right\} &\simeq \CP^{j}.
\end{align*}
Note that $2(n - j - 1) + 2 j = 2n - 2$, as required.
\end{example}

\begin{example}\label{ex2}
Given $n \geq 3$, let  $\Tilde G_2(\R^{n+2})$ denote the
Grassmannian of oriented two-planes in $\R^{n+2}$. Since this
$2n$-dimensional manifold naturally arises as a coadjoint orbit of
$SO(n+2)$, it inherits a symplectic form $\omega$ and a Hamiltonian
$SO(n+2)$ action.

Thus, if $n$ is odd, there is a Hamiltonian circle
action on $\Tilde G_2(\R^{n+2})$ induced by the action on
$\R^{n+2}\cong \R \times \C^{\frac{1}{2}(n+1)}$ given by
$$\lambda \cdot (t, z_1, ..., z_{\frac{1}{2}(n+1)})=
(t, \lambda z_1, ..., \lambda z_{\frac{1}{2}(n+1)}).$$ The fixed set
consists of two components, corresponding to the two orientations on
the real two-planes in $\P \big( \{0\} \times \C^{\frac{1}{2}(n+1)}
\big) \simeq \CP^{\frac{1}{2}(n-1)}$. Note that $
2\left(\frac{1}{2}(n-1) \right) + 2\left(\frac{1}{2}(n-1)\right) =
2n -2$, as required. Moreover, $\Tilde G_2(\R^{n+2})^{\Z_2}$  is the
set of planes which lie entirely within $\{0\} \times
\C^{\frac{1}{2}(n+1)}$. This  submanifold, which  is
symplectomorphic to $\Tilde G_2(\R^{n+1})$, has codimension $2$.
\end{example}

\begin{remark}
It is natural to wonder whether Examples \ref{ex1} and \ref{ex2} are the only
examples that satisfy the conditions of Theorems \ref{A} and \ref{B}.
Unfortunately, this question is very hard.
If $X$ or $Y$ is an isolated point, or if
$\dim(M) \leq  6$ and the action is semifree, then  --
up to equivariant symplectomorphism  --
there are no other examples  \cite{De, Go}.
Although there are serious technical difficulties,
it may be possible using techniques from \cite{Go, Mc}
to extend this -- but only for  manifolds with $\dim(M) \leq 6$.
However,  using our results,  the first author,  Olbermann,  and Stanley recently proved
that $M$  is simply connected.  Using this, they showed that --
up to equivariant diffeomorphism --  there are finitely many
such manifolds in each dimension \cite{LOS}.
\end{remark}

The Atiyah-Bott-Berline-Vergne
localization formula
is not sufficient for the work of this
paper. Instead, we work much more directly with the cohomology
ring and Chern classes of the reduced space itself. We believe that
this more direct approach will be vital for further progress.

The results in this paper suggest several paths for further
research. The most obvious is to study
the questions on page \pageref{q1}
when $M^{S^1}$ contains more than two connected components. For example,
in Proposition~\ref{implies}, we show that whenever $M$ has fixed
components of minimal dimension, then the minimal fixed component
$X$ is a real cohomology projective space, that is, $H^*(X;\R)
\simeq H^*(\CP^k;\R)$ for some $k$.

Alternatively, one could attempt to classify circle actions with
two fixed components.
For example, in the
appendix, we show that for any effective Hamiltonian circle action
on compact symplectic manifolds with two fixed components, no point
has stabilizer $\Z_k$ for any $k
> 6$.
This raises another question.

\begin{question}
Does there exist  an effective  Hamiltonian circle action on a
compact symplectic manifold $(M,\omega)$ such that $M^{S^1}$ has
exactly two components and there exists a  point $x \in M$ with
stabilizer $\Z_k$, for $k >2$?
\end{question}

Hausmann and Holm  also studied  Hamiltonian
circle actions with two fixed components, but from a very different
perspective \cite{HH}.

The outline of this paper is straightforward. We describe properties of
moment maps in \S2; this is mostly review. In \S3, we use
Theorem~\ref{B} to prove Theorem~\ref{A} and calculate the equivariant cohomology of $M$.
 The rest of the paper is
dedicated to proving Theorem 2. We consider the implications of our
two  main restriction -- that the fixed components have minimal
dimension, and that there are only two fixed components, in \S4 and
\S5, respectively. In \S6, we bring these arguments together to
complete the proof of Theorem~\ref{B} in the semifree case. Finally,
in \S7 we study isotropy submanifolds of actions with only two fixed
components, and in \S8 we use this to complete the proof of
Theorem~\ref{B}.

\subsubsection*{Acknowledgment}
The first author  would like to thank  the University of Luxembourg,
and particularly the University of Illinois at Urbana-Champaign  for
financial support while she was visiting the second author. We  thank Professor Ono for suggesting several references.

\section{Background}\label{background}

The main goal of  this section is to introduce some background material and
establish our notation.
However,  in a few cases we will need to slightly extend
known results.

Let the circle act (possibly trivially) on a space $X$. The {\bf
equivariant cohomology} of $X$ is $$H^*_{S^1}(X) = H^*(X
\times_{S^1} S^\infty).$$ For example, if $p$ is a point then
$H^*_{S^1}(p;\Z) = H^*(\CP^\infty;\Z) = \Z[t]$. More generally, if
$F \subset X^{S^1}$ is a fixed component, then $H_{S^1}^*(F)$ is
naturally isomorphic to $H^*(F) \otimes H^*(\CP^\infty)= H^*(F)[t].$
In contrast, if the stabilizer of every point $x \in X$ is finite,
then $H_{S^1}^*(X;\R)$ is naturally isomorphic to $H^*(X/S^1;\R)$.
Note that the projection map $X\times_{S^1} S^{\infty}\rightarrow
\CP^{\infty}$ induces a pull-back map
\begin{equation}\label{pi}
\pi^* \colon H^*(\CP^{\infty})\rightarrow
H^*_{S^1}(X);
\end{equation}
hence, $H^*_{S^1}(X)$ is a $H^*(\CP^{\infty})$ module.

Let $M$ be a compact manifold.
A {\bf symplectic form} on $M$ is a closed,
non-degenerate two-form $\omega \in \Omega^2(M)$.
A circle action on $M$ is {\bf symplectic}
if it preserves $\omega$.  A symplectic circle
action is {\bf Hamiltonian} if there exists
a {\bf moment map}, that is,
a map $\phi \colon M \to \R$ such that
$$- d\phi = \iota_{\xi_M} \omega,$$
where $\xi_M$ is the vector field on $M$ induced by the circle
action. Since $\iota_{\xi_M} \omega$ is closed, every symplectic
action is Hamiltonian if $H^1(M;\R) = 0$.

As we mentioned in the introduction, the moment map $\phi \colon M
\to \R$ is a Morse-Bott function whose critical set is exactly the
fixed point set $M^{S^1}$. Therefore, if $c \in \R$ is a regular
value of $\phi$, then every point in the level set $\phi^{-1}(c)$
has finite stabilizer.  Since $\phi^{-1}(c)$ is a manifold, this
implies that the {\bf symplectic reduction} of $M$ at $c$,
$$M_c := \phi^{-1}(c)/S^1,$$
is an orbifold.
Since $\omega|_{\phi^{-1}(c)}$ is a basic two-form,
there exists a symplectic form $\omega_c \in \Omega^2(M_c)$ such
that $\rho^*(\omega_c) = \omega|_{\phi^{-1}(c)}$,
where $\rho \colon \phi^{-1}(c) \to M_c$ is the quotient map.
Let
$$\kappa \colon H_{S^1}^*(M;\R) \to H^*(M_c;\R)$$
be the composition of the restriction map from
$H_{S^1}^*(M;\R)$ to $H_{S^1}^*(\phi^{-1}(c);\R)$ and
the isomorphism from $H_{S^1}^*(\phi^{-1}(c);\R) $ to $H^*(M_c;\R)$;
this is called the {\bf Kirwan map}.

Given a symplectic manifold $(M,\omega)$, there is an almost complex
structure $J \colon T(M) \to T(M)$ which is {\bf compatible} with
$\omega$, that is, $\omega(J(\cdot), \cdot)$ is a Riemannian metric.
Moreover, the set of such structures is connected, and so there is a
well defined total  Chern class $c(M) \in H^*(M;\Z)$. Similarly,
given a circle action on $(M,\omega)$ with moment map $\phi \colon M
\to \R$, there is a well-defined multiset of integers, called {\bf
weights}, associated to each fixed point $p$.
In fact, for any fixed
component $F$, the tangent bundle $T(M)|_F$ naturally splits into
subbundles -- one corresponding to each weight.

The negative normal bundle $N_F^-$ at  a fixed component $F$ is the
sum of the subbundles of $T(M)|_F$ with negative weights.  In
particular, if  $\lambda_F$ is the number of negative weights in
$T_p M$ for any $p\in F$ (counted with multiplicity), then the {\bf
index} of $\phi$ as a Morse-Bott function  at $F$ is $2\lambda_F$.
Under the identification $H^*_{S^1}(F) = H^*(F)[t]$, the equivariant
Euler class $e^{S^1}(N_F^-)$  is a polynomial in $t$ whose highest
degree term is $\Lambda^-_F\, t^{\lambda_F}$, where $\Lambda_F^- \in
\Z \smallsetminus \{0\}$ is the product of the negative (integer)
weights at $F$. As Atiyah and Bott pointed out, this fact implies
that $e^{S^1}(N_F^-)$ is not a zero divisor in $H^{*}_{S^1}(F;\R)$.
Kirwan proved that this fact has remarkable consequences for Hamiltonian
actions \cite{K}; we explain some of these consequences below.

Let $R$ be a commutative ring (with unit), for
example, $\R$,  $\Z$, or $\Z_p$. If $R = \R$, or if the action is
semifree, or if $H^{j-2\lambda_F} (F;\Z)$ is torsion-free {\em and}
$R = \Z$,  then multiplication by $e^{S^1}(N_F^-)$ induces an
injection from $H^{j-2\lambda_F}_{S^1}(F;R)$ to $H^{j}_{S^1}(F;R)$.
(See \cite{TW} for comments on the case $R \neq \R$.)
Let $M^\pm = \phi^{-1}(-\infty, \phi(F) \pm \epsilon)$, where
$\epsilon > 0$ is sufficiently small. For simplicity, assume that
$F$ is the only fixed set in $M^+ \smallsetminus M^-$. By the
previous paragraph, if $R = \R$, or if the action is semifree, or if
$H^{j-2\lambda_F}(F;\Z)$ and $H^{j - 2\lambda_F +1}(F;\Z)$ are
torsion-free and $R = \Z$, then
 the map from $H_{S^1}^{*}(M^+,M^-;R)=H_{S^1}^{*-2\lambda_F}(F;R)$ to $H_{S^1}^*(F;R)$
 is injective for $*=j$ and $*=j+1$;
therefore, the long exact sequence in equivariant
cohomology for the pair $(M^+,M^-)$ breaks into a short exact
sequence
\begin{equation}
\label{short}
 0 \to H_{S^1}^j(M^+,M^-;R) \to H_{S^1}^j(M^+;R)
\to H_{S^1}^j(M^-;R) \to 0.
\end{equation}
In particular, if $j \leq 2 \lambda_F$ and $R = \Z$, then
\eqref{short} is exact because $H^i(F;\Z)$ is torsion-free for all
$i \leq 1$. (Note that, if  $j \leq 2 \lambda_F - 2$ then
$H_{S^1}^j(M^+,M^-;R) = H_{S^1}^{j+1}(M^+,M^-;R) = 0$ and so
$H^j_{S^1}(M^+;R) = H^j_{S^1}(M^-;R)$ for any commutative ring $R$.)

Additionally, restriction induces a natural map of exact sequences
\begin{equation*}
\minCDarrowwidth20pt
\begin{CD}
\dots  @>>> H^j_{S^1}(M^+,M^-;R) @>>> H^j_{S^1}(M^+;R)
  @>>> H^j_{S^1}(M^-;R)  @>>>  \dots \\
@. @VVV @VVV @VVV @. \\
\dots  @>>> H^j(M^+,M^-;R) @>>> H^j(M^+;R)   @>>> H^j(M^-;R)  @>>> \dots.  \\
\end {CD}
\end{equation*}
The restriction map from $H^j_{S^1}(M^+,M^-;R)$ to $H^j(M^+,M^-;R)$
is surjective because $H^j_{S^1}(F;R) = H^j(F;R)[t]$.
Hence, by an easy diagram chase,
if \eqref{short} is exact
and
if the restriction map from $H^j_{S^1}(M^-;R)$ to $H^j(M^-;R)$ is surjective,
then
the restriction map from $H^j_{S^1}(M^+;R)$ to $H^j(M^+;R)$ is also surjective.
If, additionally,
the restriction map from $H^{j-1}_{S^1}(M^-;R)$ to $H^{j-1}(M^-;R)$ is
surjective,
then
\begin{equation}\label{short2}
 0 \to H^j(M^+,M^-;R) \to H^j(M^+;R)
\to H^j(M^-;R) \to 0
\end{equation}
is a short exact sequence.

Note that, if $j = 2$ and $R = \Z$,  then \eqref{short} is exact for
any fixed component $F$. This is because either  $\lambda_F = 0$, in
which case $e^{S^1}(N_F^-) = 1$, or $\lambda_F > 0$, in which
case $H^{j - 2 \lambda_F}(F;\Z)$ and $H^{j-2\lambda_F+1}(F;\Z)$ are
torsion-free. Therefore, the proposition below follows easily by
induction and the paragraph above.

\begin{proposition}\label{formal}
Let the circle act
on a compact symplectic manifold
$(M,\omega)$ with moment map $\phi \colon M \to \R$. The natural
restriction $H^2_{S^1}(M;\Z) \to H^2(M;\Z)$ is onto.
\end{proposition}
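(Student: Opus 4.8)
The plan is to run an induction on the fixed components of $\phi$, ordered by moment map value, using the short exact sequence \eqref{short} and the diagram chase spelled out in the paragraph preceding the statement. Write $M^{S^1} = F_0, F_1, \dots, F_N$ with $\phi(F_0) < \phi(F_1) < \dots < \phi(F_N)$ (after a small perturbation we may assume distinct critical values and at most one fixed component between consecutive regular values), and for each $\ell$ let $M_\ell = \phi^{-1}(-\infty, c_\ell)$ where $c_\ell$ is a regular value just above $\phi(F_\ell)$. The claim I will prove by induction on $\ell$ is that the restriction maps $H^1_{S^1}(M_\ell;\Z) \to H^1(M_\ell;\Z)$ and $H^2_{S^1}(M_\ell;\Z) \to H^2(M_\ell;\Z)$ are both surjective; since $M_N = M$, the $j=2$ half gives the proposition.

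First I would handle the base case: for small $\ell=0$, $M_0$ deformation retracts onto $F_0$, so $H^*_{S^1}(M_0;\Z) = H^*(F_0;\Z)[t]$ and the restriction map (setting $t=0$) is visibly surjective in every degree. For the inductive step, pass from $M_{\ell-1}=M^-$ to $M_\ell = M^+$ across the single fixed component $F = F_\ell$. The key input, already established in the excerpt, is that for $j \le 2$ and $R=\Z$ the sequence \eqref{short} is exact: indeed $H^i(F;\Z)$ is torsion-free for $i \le 1$, so the relevant multiplication-by-$e^{S^1}(N_F^-)$ maps are injective, and when $\lambda_F = 0$ one has $e^{S^1}(N_F^-)=1$ so there is nothing to check. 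Granting exactness of \eqref{short} for $j = 1$ and $j = 2$, and the inductive hypothesis that restriction $H^{j}_{S^1}(M^-;\Z) \to H^j(M^-;\Z)$ is onto for $j = 1, 2$, the diagram chase quoted just before Proposition~\ref{formal} applies: surjectivity of $H^j_{S^1}(M^+,M^-;\Z) \to H^j(M^+,M^-;\Z)$ (which holds because $H^*_{S^1}(F;\Z) = H^*(F;\Z)[t]$) together with the two surjectivities on $M^-$ in degrees $j-1$ and $j$ forces \eqref{short2} to be exact and, in particular, forces $H^j_{S^1}(M^+;\Z) \to H^j(M^+;\Z)$ to be surjective, for $j=1,2$. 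This closes the induction.

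One point that needs a word of care, and which I expect to be the only real obstacle, is the bookkeeping when several fixed components share a moment map value, or more generally making sure the "only one fixed component in $M^+ \smallsetminus M^-$" simplification is harmless: one can either perturb $\phi$ slightly within its class (this does not change the cohomology of the relevant sublevel sets for a regular value argument) or, more robustly, replace a single component $F$ by a disjoint union of components at the same level, in which case $H^*_{S^1}(F;\Z)$ is still a free $\Z[t]$-module on a torsion-free-in-degrees-$\le 1$ base and every step above goes through verbatim. The other thing to keep straight is that the hypotheses of the diagram chase demand surjectivity on $M^-$ in degree $j-1$ as well as $j$; this is why I set up the induction to carry the statement simultaneously in degrees $1$ and $2$ rather than degree $2$ alone, and why I included the (easy) base-case check in degree $1$.
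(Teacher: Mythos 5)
Your proof is correct and follows essentially the same route as the paper: induction over the sublevel sets of $\phi$, using the exactness of \eqref{short} for $j\le 2$ with $R=\Z$ (torsion-freeness of $H^i(F;\Z)$ for $i\le 1$, or $\lambda_F=0$) and the diagram chase preceding the proposition. The only difference is that you carry degree-$1$ surjectivity through the induction; this is harmless but unnecessary, since the chase for surjectivity in degree $j$ on $M^+$ needs only exactness of \eqref{short} and surjectivity on $M^-$ in degree $j$ itself, the degree $j-1$ hypothesis being required only for the short exactness of \eqref{short2}.
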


More generally, as we saw above, if the action is semifree, if $R =
\R$, or if $H^*(M^{S^1};\Z)$ is torsion-free and $R = \Z$,  then
\eqref{short} is exact for every $j$. By induction, this implies
that \eqref{short2} is exact for all $F$, that the restriction map
$H^*_{S^1}(M;R) \to H^*(M;R)$ is a surjection, and that the
restriction map $\iota^* \colon H^*_{S^1}(M;R) \to
H^*_{S^1}(M^{S^1};R)$ is an injection.

If $H^*(M^{S^1};\Z)$ is torsion-free and $R = \Z$, or if $R = \R$,
then \eqref{short} and \eqref{short2}
imply that
\begin{gather*}
 H_{S^1}^j(M;R) = \bigoplus_{F \subset M^{S^1}} H_{S^1}^{j - 2\lambda_F}(F;R),
\quad \mbox{and}  \\
H^j(M; R) = \bigoplus_{F \subset M^{S^1}} H^{j - 2 \lambda_F}(F;R)
\quad \forall j,
\end{gather*}
where the sum is over all fixed components. In particular, $\phi$ is
{\bf perfect} and {\bf equivariantly perfect}, that is, these
equations hold for $R = \R$.

In particular, if $H^*(M^{S^1};\Z)$ is torsion-free and $R = \Z$, or if
$R = \R$, then $H^*(M;R)$ is a finitely generated free $R$-module and
 the restriction map from $H^j_{S^1}(M;R) $ to $H^j(M;R)$ is surjective.
Therefore,
applying the Leray-Hirsch Theorem to the fiber
bundle $M\times_{S^1} S^\infty \stackrel{\pi}{\to} \CP^\infty$, we
see that
the kernel of  the restriction map $ H^j_{S^1}(M;R) \to  H^j(M;R) $  is
the ideal generated by $\pi^*(t)$, where $t \in
H^2(\CP^\infty;R)$ is the generator.
Hence, if we
want to compute the (ordinary) cohomology of $M$, it is enough to
determine the equivariant cohomology of $M$;
\begin{equation}
\label{ordequi} H^*(M;R) = H^*_{S^1}(M;R)/(t).
\end{equation}

We can use a similar argument as in \cite{T} to prove the following claim.

\begin{proposition}\label{multiple-Euler}
Let the circle act on a compact symplectic manifold $(M, \omega)$
with moment map $\phi \colon M \to \R$. Consider $\beta\in
H^*_{S^1}(M; \R)$
so that $\beta|_{F'}=0$ for all fixed components $F'$ such
that $\phi(F') < \phi (F)$, where $F$ is a fixed component. Then
$\beta|_F$ is a multiple of $e^{S^1}(N_F^-)$.
\end{proposition}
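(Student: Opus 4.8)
The plan is to use the Morse-theoretic filtration of $M$ by sublevel sets together with the injectivity properties of multiplication by equivariant Euler classes that were reviewed above. Let $F$ be the fixed component in question, and list the fixed components $F' \neq F$ with $\phi(F') < \phi(F)$ as $F_1, \dots, F_r$, ordered by increasing moment map value. Set $c = \phi(F)$ and let $M^- = \phi^{-1}(-\infty, c - \epsilon)$ and $M^+ = \phi^{-1}(-\infty, c + \epsilon)$ for small $\epsilon > 0$, so that $F$ is the only fixed component in $M^+ \smallsetminus M^-$. Over $\R$, the action is automatically equivariantly perfect, so the long exact sequence of the pair $(M^+, M^-)$ breaks into the short exact sequence $0 \to H^*_{S^1}(M^+, M^-; \R) \to H^*_{S^1}(M^+; \R) \to H^*_{S^1}(M^-; \R) \to 0$, and $H^*_{S^1}(M^+, M^-; \R) \cong H^{*-2\lambda_F}_{S^1}(F;\R)$ with the map into $H^*_{S^1}(M^+;\R)$ followed by restriction to $F$ being multiplication by $e^{S^1}(N_F^-)$ (this is the standard Morse-theoretic description of the Thom class, which underlies the discussion preceding Proposition~\ref{formal}).

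First I would observe that $\beta|_{M^-} = 0$: indeed, $\beta|_{M^-} \in H^*_{S^1}(M^-;\R)$, and since over $\R$ the restriction map $\iota^* \colon H^*_{S^1}(M^-;\R) \to H^*_{S^1}\big((M^-)^{S^1};\R\big) = \bigoplus_{i} H^*_{S^1}(F_i;\R)$ is injective (again because $\phi$ restricted to $M^-$ is equivariantly perfect over $\R$, so all the relevant short exact sequences split and injectivity to the fixed set follows by induction), and by hypothesis $\beta|_{F_i} = 0$ for every $i$, we conclude $\beta|_{M^-} = 0$. Then by exactness of the short exact sequence above, $\beta|_{M^+}$ lies in the image of $H^*_{S^1}(M^+, M^-;\R) \cong H^{*-2\lambda_F}_{S^1}(F;\R)$; write $\beta|_{M^+} = j_!(\alpha)$ for some $\alpha \in H^{*-2\lambda_F}_{S^1}(F;\R)$, where $j_!$ denotes the pushforward (Thom isomorphism composed with the map to $M^+$). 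Restricting to $F$ and using $\beta|_F = (\beta|_{M^+})|_F = j_!(\alpha)|_F = \alpha \cdot e^{S^1}(N_F^-)$ gives exactly the conclusion that $\beta|_F$ is a multiple of $e^{S^1}(N_F^-)$.

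The main subtlety — and the place I expect to spend the most care — is the base case and inductive bookkeeping when $F$ is not the global minimum: one must be sure that $\beta|_{M^-} = 0$ genuinely follows from the vanishing on the lower fixed components, which requires the injectivity of $H^*_{S^1}(M^-;\R) \to H^*_{S^1}((M^-)^{S^1};\R)$. This is precisely the consequence of equivariant perfectness over $\R$ recorded in the excerpt (``the restriction map $\iota^* \colon H^*_{S^1}(M;R) \to H^*_{S^1}(M^{S^1};R)$ is an injection'' when $R = \R$), applied to the open submanifold $M^-$ in place of $M$ — the same Morse-theoretic argument works verbatim since $\phi|_{M^-}$ is still a proper-below Morse-Bott function with the same local structure. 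One should also note that the argument is insensitive to whether there are several fixed components at the critical value $\phi(F)$ other than $F$ itself; if so, one can perturb or simply handle them by the same relative-sequence argument, since the hypothesis only constrains components strictly below $\phi(F)$, and components at the same level as $F$ restrict to $M^-$ trivially as well. Finally, I would remark that this is the real-coefficient analogue of the argument in \cite{T}, so citing that reference for the parallel computation is appropriate, as the statement already suggests.
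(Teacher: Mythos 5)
Your argument is correct and is exactly the standard Kirwan-type Morse--Bott argument that the paper has in mind: the paper gives no proof of Proposition~\ref{multiple-Euler} beyond citing the analogous argument in \cite{T}, and your use of equivariant perfectness over $\R$, injectivity of restriction to the fixed set of the sublevel set $M^-$, and the identification of the relative class's restriction to $F$ with multiplication by $e^{S^1}(N_F^-)$ is that same argument. (Only your parenthetical suggestion to ``perturb'' in the case of several components at the same critical level is shaky, but your alternative of handling them directly via the relative sequence, using that a class pushed forward from a component disjoint from $F$ restricts to zero on $F$, is fine.)
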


Given any manifold $M$, there is a natural map
from $H^*(M;\Z)$ to $H^*(M;\R)$.
The image of this map is a lattice in $H^*(M;\R)$.
We shall say that a class is {\bf integral} if it lies in
the image of this map and is {\bf primitive} if,
in addition,
it is not a positive integer multiple of any other integral class.

\begin{lemma}\label{uvt}
Let the circle act  on a compact symplectic manifold
$(M,\omega)$ with moment map $\phi \colon M \to \R$.  Let $F$ be a
fixed component.
\begin{itemize}
\item There exists
$\ut \in H_{S^1}^2(M;\R)$ so that,
\begin{gather*}
\ut|_{F'} = [\omega|_{F'}] + t \left(\phi(F) - \phi(F') \right)
\quad \mbox{and} \\
\kappa_c\big(\ut - t (\phi(F) - c)\big) = \omega_c
\end{gather*}
for all fixed components $F'$ and all regular values $c \in \R$.
Here, $\kappa_c \colon H^*_{S^1}(M;\R) \to H^*(M_c; \R)$ is the
Kirwan map and $(M_c,\omega_c)$ is the symplectic reduction of $M$
at $c$.
\item If $[\omega]$ is integral, then  $\ut$ is integral.
\end{itemize}
\end{lemma}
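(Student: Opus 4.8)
The plan is to construct $\ut$ by starting with the equivariant symplectic form normalized at $F$ and then correcting it so that its restriction to each fixed component $F'$ has exactly the stated value. Recall the standard equivariant extension of $\omega$: since the action is Hamiltonian with moment map $\phi$, the form $\omega - t\,\phi$ (using the Cartan model, where $t$ is the degree-two generator) is equivariantly closed, so it defines a class $[\omega - t\phi]_{S^1} \in H^2_{S^1}(M;\R)$ whose restriction to a fixed component $F'$ is $[\omega|_{F'}] - t\,\phi(F')$. This already differs from the desired $\ut|_{F'} = [\omega|_{F'}] + t(\phi(F) - \phi(F'))$ only by the globally defined class $t\,\phi(F)$, which lies in the image of $\pi^*$. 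So I would simply set
\[
\ut = [\omega - t\phi]_{S^1} + t\,\phi(F) \in H^2_{S^1}(M;\R).
\]
Then $\ut|_{F'} = [\omega|_{F'}] - t\phi(F') + t\phi(F) = [\omega|_{F'}] + t(\phi(F) - \phi(F'))$, which is the first required identity.

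For the second identity, I would invoke the definition of the Kirwan map and the fact that reduction carries the equivariant symplectic class to the reduced symplectic class. Concretely, $\kappa_c([\omega - t\phi]_{S^1})$: the restriction of $\omega - t\phi$ to $\phi^{-1}(c)$ is, in the Cartan model, represented by $\omega|_{\phi^{-1}(c)}$ together with the moment map contribution; but along the level set $\phi \equiv c$, the constant $c$ just pulls back, so $\kappa_c([\omega-t\phi]_{S^1}) = \omega_c - \kappa_c(t)\,c = \omega_c - \kappa_c(t\,c)$ where I am using that $\rho^*\omega_c = \omega|_{\phi^{-1}(c)}$ and that $\kappa_c$ is an algebra homomorphism. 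Hence $\kappa_c(\ut) = \omega_c - \kappa_c(tc) + \kappa_c(t\phi(F)) = \omega_c + \kappa_c\big(t(\phi(F)-c)\big)$, i.e. $\kappa_c\big(\ut - t(\phi(F)-c)\big) = \omega_c$, as desired. The one point requiring a little care here is the precise bookkeeping of the moment-map term in the Cartan model when restricting to a level set; this is the step I would write out most carefully, but it is standard.

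For the integrality statement: if $[\omega]$ is integral, then the equivariant class $[\omega - t\phi]_{S^1}$ need not a priori be integral, but the difference between $\ut$ and an integral lift is controlled by $H^2$ of a point. More precisely, by Proposition~\ref{formal} the restriction $H^2_{S^1}(M;\Z) \to H^2(M;\Z)$ is surjective, so there is an integral equivariant class $\alpha$ with $\alpha|_M = [\omega]$ (the image in ordinary cohomology). Since $\ut$ also restricts to $[\omega]$ in $H^2(M;\R)$ — indeed, forgetting the equivariant structure kills the $t$-terms — the difference $\ut - \alpha$ lies in the kernel of $H^2_{S^1}(M;\R) \to H^2(M;\R)$, which by the Leray–Hirsch discussion preceding \eqref{ordequi} (applicable here because we may work over $\R$, or because $H^{\le 2}$ of the fixed set is torsion-free) is the free rank-one module generated by $\pi^*(t)$. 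So $\ut - \alpha = r\,\pi^*(t)$ for some $r \in \R$. Restricting to the fixed component $F$ gives $\ut|_F - \alpha|_F = r\,t$; but $\ut|_F = [\omega|_F] + 0 = [\omega|_F]$, which is integral (as $[\omega]$ is), and $\alpha|_F$ is integral, so $r \in \Z$. Therefore $\ut = \alpha + r\,\pi^*(t)$ is integral.

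The main obstacle I anticipate is not any single hard step but rather getting the normalization conventions consistent — the sign in $-d\phi = \iota_{\xi_M}\omega$, the placement of the $t$ in the Cartan model, and the resulting formula for $\kappa_c$ — so that all three displayed identities come out with exactly the signs claimed. Everything else is a routine assembly of the perfection/Leray–Hirsch machinery already set up in this section together with Proposition~\ref{formal}.
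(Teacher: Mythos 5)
Your proposal is correct and follows essentially the same route as the paper: you take $\ut = [\omega - t\phi + t\phi(F)]$ in the Cartan/De Rham model, verify the restriction and Kirwan-map identities by restricting to the level set $\phi^{-1}(c)$, and prove integrality by comparing $\ut$ with an integral lift from Proposition~\ref{formal}, noting that their difference is a multiple of $t$ and restricting to $F$ to see the multiple is an integer. The sign and bookkeeping issues you flag are handled exactly as you anticipate, so no changes are needed.
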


\begin{proof}
To prove the first claim, let  $\ut = [\omega - t \phi +  t\phi(F)
]$ in the De Rham model of equivariant cohomology. If $c$ is a
regular value, then $\ut -  t\phi(F) + tc = [\omega - t \phi + t
c]$, and so $(\ut - t \phi(F) + tc)|_{\phi^{-1}(c)} =
[\omega|_{\phi^{-1}(c)}]$, which maps to $\omega_c$ under the
natural isomorphism $H_{S^1}^*(\phi^{-1}(c);\R) \simeq H^*(M_c;
\R)$.

If $[\omega] \in H^2(M;\R)$ is integral, then by Proposition~\ref{formal},
there exists an integral class
$\at \in H^2_{S^1}(M;\R)$ which maps to $[\omega]$
under the natural restriction $H_{S^1}^2(M;\R) \to H^2(M;\R)$.
Then the
restriction of $\at - \ut$ under the same map is zero.
Since $H^*(\CP^\infty;\R) = \R[t]$,  this
implies that $\at - \ut = \lambda t$ for some constant
$\lambda\in \R$. Finally, since $\at$ is integral,
$\at|_F = \ut|_F +\lambda t = [\omega|_F] +
\lambda t$ is integral; hence $\lambda\in\Z$.
\end{proof}

\begin{lemma}\label{Chern}
Let the circle act on a complex vector bundle $E$ of rank $d$ over a
compact manifold $X$ so that $E^{S^1} = X$. Assume that there exists
a non-zero $\lambda \in \Z$ so that the circle acts on $E$ with
weight $\lambda$. Then there exists $c_i \in H^{2i}(X;\Z)$ for all
$i \in \left\{0,\dots,\frac{1}{2}\dim(M) \right\}$ such that
\begin{align*}
c(E) &= 1 + c_1 + \dots + c_{d-1} + c_d, \\
c^{S^1}(E) &= (1 + \lambda t)^d + c_1 (1 + \lambda t)^{d-1}+ \dots + c_{d-1} (1 + \lambda t) +c_d,
\quad \mbox{and} \\
e^{S^1}(E) &=
(\lambda t)^d + c_1 (\lambda t)^{d-1}+ \dots +  c_{d-1} (\lambda t)
+c_d.
\end{align*}
Here, $c(E)$, $c^{S^1}(E)$, and $e^{S^1}(E)$ are the total Chern
class of $E$, the total {\em equivariant} Chern class of $E$, and
the  equivariant Euler class of $E$, respectively.
\end{lemma}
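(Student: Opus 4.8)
The plan is to reduce everything to the splitting principle together with the elementary structure of $S^1$-equivariant bundles on a trivially-acted base. First I would recall that since $E^{S^1} = X$ and $S^1$ acts with the single weight $\lambda \neq 0$, the equivariant bundle $E \to X$ is, as an equivariant bundle, isomorphic to $E \otimes \mathbb{C}_\lambda$, where $\mathbb{C}_\lambda$ is the one-dimensional representation of weight $\lambda$ and $E$ is regarded with trivial action; concretely, $E \times_{S^1} S^\infty \cong \pi^*(E)\otimes L^{\otimes \lambda}$ over $X \times \mathbb{CP}^\infty$, where $L$ is the tautological line bundle and $\pi \colon X \times \mathbb{CP}^\infty \to X$ the projection. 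The nonzero integer $\lambda$ is used only to guarantee the weight is nonzero so that the claimed leading term of $e^{S^1}(E)$ is genuinely $(\lambda t)^d$ and nonvanishing; otherwise $\lambda$ enters purely formally.

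Next I would invoke the splitting principle: pull back to a flag bundle on which $\pi^* E$ splits as a sum of line bundles $\ell_1 \oplus \cdots \oplus \ell_d$, and set $x_a = c_1(\ell_a)$, so that $c(E) = \prod_a (1 + x_a)$ and the ordinary Chern classes $c_i = c_i(E)$ are the elementary symmetric functions in the $x_a$. Under this splitting the equivariant bundle becomes $\bigoplus_a \ell_a \otimes \mathbb{C}_\lambda$, whose $a$-th equivariant Chern root is $x_a + \lambda t$ (here $t = c_1(L)$, matching the identification $H^*_{S^1}(X) = H^*(X)[t]$). Hence
\begin{equation*}
c^{S^1}(E) = \prod_{a=1}^d \bigl(1 + x_a + \lambda t\bigr)
= \prod_{a=1}^d \bigl((1 + \lambda t) + x_a\bigr),
\end{equation*}
and expanding this product in the "variable" $(1+\lambda t)$ collects exactly the elementary symmetric functions of the $x_a$, i.e. the $c_i$, as coefficients:
\begin{equation*}
c^{S^1}(E) = \sum_{i=0}^d c_i\,(1+\lambda t)^{d-i}
= (1+\lambda t)^d + c_1 (1+\lambda t)^{d-1} + \cdots + c_{d-1}(1+\lambda t) + c_d.
\end{equation*}
The equivariant Euler class is the top equivariant Chern class, $e^{S^1}(E) = c_d^{S^1}(E) = \prod_a (x_a + \lambda t)$; expanding in powers of $\lambda t$ gives $e^{S^1}(E) = \sum_{i=0}^d c_i (\lambda t)^{d-i}$, which is the third formula. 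Finally, setting $t = 0$ in the equivariant Chern class recovers $c(E) = \sum_i c_i$, confirming the first formula and the compatibility of the $c_i$ with the ordinary Chern classes. The fact that these identities, proved after pulling back to the flag bundle, descend to $X$ itself is just the injectivity of the flag-bundle pullback on cohomology, which is the content of the splitting principle; the $c_i$ so defined lie in $H^{2i}(X;\mathbb{Z})$ because they are the ordinary integral Chern classes of $E$.

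The only genuinely delicate point — and hence the step I would be most careful about — is the identification of $E$ as an equivariant bundle with $\pi^*E \otimes L^{\otimes \lambda}$ over the Borel construction: one must check that a complex vector bundle with trivial $S^1$-action on the base and a single weight $\lambda$ on the fibres really is, equivariantly, the untwisted bundle tensored with the weight-$\lambda$ character, and that this is preserved under the splitting-principle pullback. This is standard (it amounts to the observation that the $S^1$-action on $E$ is fibrewise scalar multiplication by $\lambda$-th powers, up to equivariant isomorphism), but it is where the hypothesis "$E^{S^1} = X$ and $E$ has the single weight $\lambda$" is actually consumed; everything after that is formal symmetric-function bookkeeping. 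Indexing detail: the range $i \in \{0,\dots,\tfrac12\dim(M)\}$ in the statement should of course read $i \le \min\{d, \tfrac12\dim(X)\}$ with $c_i = 0$ for $i > d$, and I would phrase the conclusion accordingly.
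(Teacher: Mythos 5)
Your proposal is correct and follows essentially the same route as the paper's proof: both apply the splitting principle, take the equivariant Chern roots to be $\alpha_a + \lambda t$, and read off the three formulas by expanding the product $\prod_a (1+\lambda t + \alpha_a)$ in elementary symmetric functions. Your extra care in justifying the identification of the equivariant bundle with the twist by the weight-$\lambda$ character is a point the paper treats as immediate, but it does not change the argument.
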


\begin{proof}
By the splitting principle, there exists a space $Y$
and a map $p \colon Y \to X$  such that $p^* \colon H^*(X;\Z) \to H^*(Y;\Z)$
is injective and the pullback bundle $p^*(E)$ breaks up as the direct
sum of line bundles.
Therefore, without loss of generality we  may assume that the bundle is a direct sum of line bundles
with first Chern class $\alpha_1,\dots \alpha_d$ respectively.
Then
$$
c(E) = \prod_{i=1}^d (1 + \alpha_i)
\quad \mbox{and} \quad
c^{S^1}(E) = \prod_{i=1}^d (1  + \lambda t + \alpha_i).
$$
The claim follows immediately.
\end{proof}

\section{Using Theorem~\ref{B} to calculate the cohomology ring of $M$}
\label{sec:calc}

In this section, we use Theorem~\ref{B} to compute the possible
cohomology rings (ordinary and equivariant) and Chern classes of
$M$. More specifically, we prove Theorem~\ref{A} and
Proposition~\ref{C}. We give two proofs. The first is very short but
depends on knowing the cohomology ring and  Chern classes of $\CP^n$
and $\Gt_2(\R^{n+2})$. The second relies on a direct calculation.

 The cohomology ring and  Chern classes of $\CP^n$ and (if $n \neq
1$ is odd) of $\Gt_2(\R^{n+2})$ are exactly those described in
Theorem~\ref{A}. Moreover, the fixed set data which arises in
Examples~\ref{ex1} and \ref{ex2} is exactly the fixed set data
described in Theorem~\ref{B}. Finally, we can transform any
non-trivial circle action into an effective circle action by
quotienting out by the subgroup which acts trivially. Therefore,
Thereom~\ref{A} is an immediate consequence of Theorem~\ref{B} and
the proposition below, which combines Corollary 3.16 and Remark 3.18
in \cite{T}.

\begin{proposition}[Tolman]\label{Tolman}
Let the circle act on  compact symplectic manifolds $(M,\omega)$ and
$(\Mh,\oh)$ with moment maps $\phi \colon M \to \R$ and $\ph \colon
\Mh \to \R$, respectively; assume that $H^j(\Mh;\Z) = H^j(\CP^n;\Z)$
for all $j$. Also assume that there exists a bijection from the
fixed components $F_1,\dots,F_k$ of $M$ to the fixed components
$\Fh_1,\dots,\Fh_k$ of $\Mh$ so  that there exists an isomorphism
$f^* \colon H_{S^1}^*(\Fh_i;\Z) \to H_{S^1}^*(F_i;\Z)$ such that
$f^*(c^{S^1}(\Mh)|_{\Fh_i}) = c^{S^1}(M)|_{F_i}$ for all $i$. Then
there is an isomorphism $f^\sharp \colon H^*(\Mh;\Z) \to H^*(M;\Z)$
so that $f^\sharp(c(\Mh)) = c(M)$.
\end{proposition}

The argument that the {\em equivariant} cohomology and Chern classes
of $M$ agree with those of $\CP^n$ or $\Gt_2(\R^{n+2})$ is nearly
identical, except that \cite[Corollary 3.16]{T} should be replaced
by \cite[Corollary 3.13]{T}.\\

 Alternatively, we can use
Theorem~\ref{B} to directly calculate the (equivariant) cohomology
and Chern classes of $M$.

\begin{proposition}\label{C}
Let the circle act effectively on a   compact symplectic
$2n$-dimensional manifold $(M, \omega)$  with moment map $\phi
\colon M \to \R$. Assume that  $M^{S^1}$ has exactly two components,
$X$ and $Y$, and that $\dim(X) + \dim(Y) = \dim(M) - 2$. If $\phi(X) < \phi(Y)$,
then one of
the following is true:
\begin{itemize}
\item[(A)]
\qquad \quad $H^*_{S^1}(M;\Z) = \Z[\xt, t]/ \big(
\xt^{i+1}\big(\xt+t\big)^{j+1} \big) \quad \mbox{and}$
$$c^{S^1}(M) = \big( 1 + \xt\big)^{i + 1} \big(1 +  \xt + t\big)^{j +
1}, \quad \mbox{where}$$
$$\xt|_X = u, \quad\mbox{and} \quad \xt|_ Y = v- t.$$

\item[(B)]
$H^*_{S^1}(M;\Z) = \Z[\xt,\yt, t]/ \Big(  \xt^{i+1} - 2 \yt, \yt
\big(\yt + \textstyle\frac{1}{2} \big( (\xt+2t)^{i+1} - \xt^{i+1}
\big) \big) \Big) \quad \mbox{and}$
$$c^{S^1}(M) = \frac {( 1 + \xt )^{i+1} (1 + \xt + 2t)^{i+1} (1 + \xt
+ t)} {1 + 2 \xt + 2 t}, \quad  \mbox{where}$$
$$\xt\big|_X = u, \quad \xt\big|_Y = v- 2t, \quad \yt\big|_X = 0,
\quad \mbox{and} \quad \yt\big|_Y = \textstyle\frac{1}{2} (v
-2t)^{i+1}.$$
\end{itemize}
In both cases, $t$ generates
$\pi^*\left(H^2(\CP^\infty;\Z)\right) \subset H^2_{S^1}(M;\Z)$ and
$\xt$ has degree $2$; in case (B), $\yt$ has degree $n+1$.
Moreover, $u$ and $v$ are the positive generators of
$H^2(X;\Z)$ and $H^2(Y;\Z)$, respectively, $\dim(X) = 2i$, and $\dim(Y) = 2j.$
\end{proposition}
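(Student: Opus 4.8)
The plan is to derive Proposition~\ref{C} from Theorem~\ref{B} together with the abstract equivariant-cohomology machinery assembled in \S\ref{background}. The key observation is that by Theorem~\ref{B} the fixed set data is known exactly: $H^*(X;\Z) = \Z[u]/u^{i+1}$, $H^*(Y;\Z) = \Z[v]/v^{j+1}$ (with $c(X),c(Y)$ the expected binomial expressions), and the normal bundle weights and Chern classes at $X$ and $Y$ are given in either the semifree case (A) or the non-semifree case (B). In particular the fixed-point cohomology is torsion-free, so the discussion surrounding \eqref{short}--\eqref{ordequi} applies with $R=\Z$: the moment map $\phi$ is equivariantly perfect, $\iota^*\colon H^*_{S^1}(M;\Z)\to H^*_{S^1}(X;\Z)\oplus H^*_{S^1}(Y;\Z)$ is injective, and $H^*(M;\Z)=H^*_{S^1}(M;\Z)/(t)$. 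Thus it suffices to identify $H^*_{S^1}(M;\Z)$ and $c^{S^1}(M)$ as subrings/elements of $H^*(X)[t]\oplus H^*(Y)[t]$, and then set $t=0$ to recover Theorem~\ref{A}.

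First I would treat case (A). Order so that $\phi(X)<\phi(Y)$; then $X$ is the minimum with $\lambda_X=0$, and $Y$ is the maximum. Since $\phi$ is equivariantly perfect, $H^*_{S^1}(M;\Z)\cong H^*_{S^1}(X;\Z)\oplus H^*_{S^1}(Y;\Z)[\text{shifted by }2\lambda_Y]$ as modules; because $X$ is the minimum, restriction to $X$ is already an isomorphism in low degrees and one uses Proposition~\ref{multiple-Euler} and Proposition~\ref{formal} to produce a class $\xt\in H^2_{S^1}(M;\Z)$ with $\xt|_X=u$. Its restriction to $Y$ must then be $v-t$: the difference $\xt|_Y-(v-t)$ lies in the image of $e^{S^1}(N_Y^-)$ by the localization criterion, but by Lemma~\ref{Chern} and the Chern-class data in Theorem~\ref{B}(A) one computes $e^{S^1}(N_Y^-)$ and checks for degree reasons that the only possibility consistent with $\xt$ being integral and primitive is $\xt|_Y=v-t$ (up to the sign/normalization choices already built into the statement). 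The relation $\xt^{i+1}(\xt+t)^{j+1}=0$ then follows because this class vanishes on both $X$ (where $\xt+t=u+t$, and $(u+t)^{j+1}$ has top $t$-degree term $t^{j+1}$, but $u^{i+1}=0$) and $Y$ (where $\xt^{i+1}=v^{i+1}\cdot(\dots)$ involves $u^{i+1}$-type vanishing — more precisely $\xt^{i+1}|_Y=(v-t)^{i+1}$ and we need $(v-t)^{i+1}v^{j+1}=0$ in $H^*(Y)[t]=\Z[v]/v^{j+1}[t]$, which holds since $v^{j+1}=0$), hence it is zero by injectivity of $\iota^*$. A dimension count (Poincaré series) shows there are no further relations. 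The formula for $c^{S^1}(M)$ comes from the splitting $T(M)|_X=TX\oplus N_X$, $T(M)|_Y=TY\oplus N_Y$, Lemma~\ref{Chern}, and injectivity of $\iota^*$: the candidate $(1+\xt)^{i+1}(1+\xt+t)^{j+1}$ restricts correctly to both components.

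Case (B) is parallel but requires the extra generator $\yt$. Here $\dim X=\dim Y=2i$ by Theorem~\ref{B}(B), so $j=i$ and $\lambda_Y=i+1$; the module decomposition gives $H^*_{S^1}(M;\Z)$ a free $\Z[t]$-basis of size $2(i+1)$, ranks $1$ in degrees $0,2,\dots,2i$ and again $1$ in degrees $2(i+1),\dots,2(2i+1)$, so one needs a degree-$2$ class $\xt$ and a degree-$2(i+1)$ class $\yt$. As before produce $\xt$ with $\xt|_X=u$ and show $\xt|_Y=v-2t$ (the weight $2$ on $N_{M^{\Z_2}}$ forces the coefficient $2$; the localization/integrality argument pins it down). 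For $\yt$: by Proposition~\ref{multiple-Euler}, any class vanishing on $X$ restricts on $Y$ to a multiple of $e^{S^1}(N_Y^-)$, which by Lemma~\ref{Chern} applied to the two pieces of $N_Y^-$ (the $\Z_2$-fixed part with weights that are even, and the rest) one computes explicitly; the generator in degree $2(i+1)$ corresponds to $\yt|_Y=\frac12(v-2t)^{i+1}$, $\yt|_X=0$ — one must check this is integral, which uses the precise Chern class $c(N_Y^{M^{\Z_2}})|_Y=(1+v)^{i+1}/(1+2v)$ from Theorem~\ref{B}(B). The relations $\xt^{i+1}=2\yt$ and $\yt(\yt+\frac12((\xt+2t)^{i+1}-\xt^{i+1}))=0$ are then verified by restricting both sides to $X$ and $Y$ and invoking injectivity of $\iota^*$; one checks these generate the full relation ideal by a Poincaré-series count. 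The Chern class formula is obtained the same way as in case~(A).

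The main obstacle I anticipate is the integrality and normalization bookkeeping in case~(B): showing that the half-integral-looking class $\yt|_Y=\frac12(v-2t)^{i+1}$ is genuinely integral (equivalently, that $(v-2t)^{i+1}\in H^*_{S^1}(Y;\Z)$ is divisible by $2$ after correcting by something pulled back from $M$), and that one cannot choose a "smaller" generator. This is exactly where the fine structure of Theorem~\ref{B}(B) — the weight-$2$ stabilizer locus $M^{\Z_2}$ and its Chern data — must be used rather than just the crude module decomposition; the parity of the binomial coefficients and the specific form $(1+v)^{i+1}/(1+2v)$ enter here. Everything else (the semifree case, the relation ideals, the Chern class identities, and passing to $H^*(M;\Z)$ via \eqref{ordequi}) is a routine, if lengthy, verification once $\xt$ and $\yt$ are in hand.
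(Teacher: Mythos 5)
Your overall skeleton does match the paper's: use Theorem~\ref{B} to see $H^*(M^{S^1};\Z)$ is torsion-free, hence $\iota^*\colon H^*_{S^1}(M;\Z)\to H^*_{S^1}(M^{S^1};\Z)$ is injective and $H^*(M;\Z)=H^*_{S^1}(M;\Z)/(t)$, then exhibit $\xt$ (and $\yt$), verify the relations and $c^{S^1}(M)$ by restricting to $X$ and $Y$. But the paper's actual engine, which your plan omits, is Proposition 3.9 of \cite{T}: once $H^k(M;\Z)=H^k(\CP^n;\Z)$ for all $k$, the explicit classes $\alpha_k$ of \eqref{alphai}, built from $c_1^{S^1}(M)$ with the normalizing factors $\Lambda^-_{F_i}/m_i$, form an \emph{integral} $\Z[t]$-module basis of $H^*_{S^1}(M;\Z)$. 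Feeding in the weight and Chern data of Theorem~\ref{B} via Lemma~\ref{Chern} gives $\xt=\alpha_1$ and $\yt=\alpha_{i+1}$ with exactly the stated restrictions, and the basis property is also what shows the listed generators and relations present the whole ring. This single citation is precisely what discharges the two steps you flag but do not resolve, and there your proposed substitutes do not work as stated.

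Concretely: (a) ``integral and primitive'' does not pin down $\xt|_Y$ --- $v+5t$ is integral and primitive too --- and your localization step is circular as phrased: to conclude via Proposition~\ref{multiple-Euler} that $\xt|_Y-(v-t)$ is a multiple of $e^{S^1}(N_Y^-)$ you need a \emph{global} class restricting to $v-t$ on $Y$ and to $u$ on $X$, which is exactly what is being constructed. The paper avoids this by taking $\xt=\frac{1}{n+1}\big(c_1^{S^1}(M)-(j+1)t\big)$ (case (A)), resp.\ $\frac1n\big(c_1^{S^1}(M)-nt\big)$ (case (B)), integral by \cite[Prop.~3.9]{T}, and simply computing its restrictions from \eqref{CA}, \eqref{CB}. (One can instead pin the $t$-coefficient down by an argument with the isomorphism of Corollary~\ref{TW'} between $H^*_{S^1}(X)/e^{S^1}(N_X)$ and $H^*_{S^1}(Y)/e^{S^1}(N_Y)$, or by redoing the normalization $m=\phi(Y)-\phi(X)=1$ resp.\ $2$ from the proofs of Propositions~\ref{case1} and \ref{case2}, but some such argument must be supplied; it is not a ``degree reason.'') (b) The integrality of $\yt$ with $\yt|_Y=\frac12(v-2t)^{i+1}$, which you correctly identify as the crux, is again handed to the paper by \cite[Prop.~3.9]{T} through the factor $\frac{1}{2n^k}$ coming from $\Lambda^-_Y=2^i(-1)^{i+1}$; ``parity of binomial coefficients'' is not an argument. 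A workable hands-on substitute would be to observe that Theorem~\ref{B}(B) and Lemma~\ref{Chern} give $e^{S^1}(N_Y)=\frac12(v-2t)^{i+1}$ exactly (the rank-one piece $N_{M^{\Z_2}}|_Y$ contributes $v-t$, the weight $-2$ piece contributes $\frac{(v-2t)^{i+1}}{2(v-t)}$), so the Morse--Thom class of degree $2(i+1)$ vanishing on $X$ restricts on $Y$ to this integral class; but as written your proposal leaves the decisive steps as acknowledged obstacles rather than proofs, whereas the paper resolves them by the cited basis theorem.
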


\begin{proof}
By  Theorem~\ref{B},
$H^*(X;\Z) = \Z[u]/u^{i+1}$ and $ H^*(Y;\Z) =
\Z[v]/v^{j+1}.$
In particular, $H^*(M^{S^1};\Z)$ is torsion-free.
As we showed in \S\ref{background}, this implies that
the restriction map $H_{S^1}^*(M;\Z) \to H_{S^1}^*(M^{S^1};\Z)$ is injective and  that
$$H^{k}(M;\Z) = \bigoplus_{F \subset M^{S^1}} H^{k - 2\lambda_F}(F;\Z)
= H^k(\CP^n;\Z) \quad \forall \ k.$$

Proposition 3.9 of \cite{T} states that whenever $H^k(M;\Z) = H^k(\CP^n;\Z)$
for all $k$,
the classes $1,\alpha_1,\dots,\alpha_n$ defined by
\begin{equation}\label{alphai}
\alpha_k = \frac{\Lambda^-_{F_k}}{m_k} \big(c_1^{S^1}(M) -
\Gamma_{F_k} t\big)^{k - \lambda_{F_k}} \prod_{\lambda_{F'} <
\lambda_{F_k}} \left(\frac{ c_1^{S^1}(M) - \Gamma_{F'}
t}{\Gamma_{F_k} - \Gamma_{F'} } \right)^{\frac{1}{2}\dim(F') + 1}
\end{equation}
form a basis for $H_{S^1}^*(M;\Z)$ as a $H^*(\CP^\infty;\Z) = \Z[t]$
module. Here, $F_k$ is the unique fixed component so that $H^{2k - 2
\lambda_{F_k}}(F_k;\Z) = \Z$, and $m_k \in \Z$ is chosen so that
$\frac{1}{m_k} c_1(M)^{k - \lambda_{F_k}}\big|_{F_k}$ generates
$H^{2k-2\lambda_{F_k}}(F_k;\Z)$ for each integer $k$ such that $0
\leq 2k \leq 2n$. Moreover, $\Lambda^-_{F}$ is the product of the
negative weights at $F$ and $\Gamma_F$ is the sum of the weights at
$F$ for each fixed component $F$. Finally, the product is over all
fixed components $F'$ such that $\lambda_{F'}  < \lambda_{F_k}.$

Assume first that the action is
semifree. By part (A) of Theorem~\ref{B} and Lemma~\ref{Chern},
\begin{equation}\label{CA}
c_1^{S^1}(M)\big|_X=(n+1)u+(j+1)t \quad\mbox{and} \quad  c_1^{S^1}(M)\big|_Y=(n+1)v-(i+1)t.
\end{equation}
Hence, $\frac{1}{(n+1)^k} c_1(M)^k\big|_X$ generates $H^{2k}(X;\Z)$  for all $k \in \{0,\dots,i\}$;
similarly,
$\frac{1}{(n+1)^k} c_1(M)^k\big|_Y$ generates $H^{2k}(Y;\Z)$ for all $k \in \{0,\dots,j\}$.
Additionally, $\Gamma_X = (j+1) $, $\Gamma_Y = - (i+1)$,
$\Lambda^-_X = 1$,  and $\Lambda^-_Y =   (-1)^{i+1}$.
Therefore, by \eqref{alphai},
$$\alpha_k =
\begin{cases}
\frac{1}{(n+1)^k}
\big( c_1^{S^1}(M) -( j + 1 ) t  \big)^k
& 0 \leq k \leq i \\
\frac{1}{ \left(n + 1 \right)^k}
\big( c_1^{S^1}(M) - \left( j + 1 \right) t \big)^{i+1}
\big( c_1^{S^1}(M) + \left( i + 1 \right) t \big)^{k-i-1}
& i < k \leq n.
\end{cases}
$$
In particular, \eqref{CA} implies that $\alpha_1|_X = u$ and
$\alpha_1|_Y = v - t$.  Hence, if we let $\xt=\alpha_1$, then
$\xt^{i+1}\big(\xt+t\big)^{j+1}=0$  and    part (A) of
Theorem~\ref{B} and Lemma~\ref{Chern} imply that
\begin{align*}
c^{S^1}(M)\big|_X  &= ( 1 + \xt)^{i + 1} (1 +  \xt + t)^{j + 1}\big|_X
\quad  \mbox{and} \\
c^{S^1}(M)\big|_Y  &= ( 1 + \xt)^{i + 1} (1 +  \xt + t)^{j + 1}\big|_Y.
\end{align*}
Since the restriction map $H^*_{S^1}(M;\Z) \to H_{S^1}^*(M^{S^1};\Z)$
is injective,  claim (A) follows easily.

Now assume that the
action is not semifree. By part (B) of Theorem~\ref{B} and
Lemma~\ref{Chern}, $\dim(X) = \dim(Y)$ and so $i =
\frac{1}{2}(n-1)$; moreover,
\begin{equation}\label{CB}
c_1^{S^1}(M)\big|_X = nu+nt \quad\mbox{and} \quad
c_1^{S^1}(M)\big|_Y = nv-nt.
\end{equation}
Hence, $\frac{1}{n^k} c_1(M)^k\big|_X$ generates $H^{2k}(X;\Z)$ and
$\frac{1}{n^k} c_1(M)^k\big|_Y $ generates $H^{2k}(Y;\Z)$
for all $k \in \{0,\dots,i\}$.
Additionally, $\Gamma_X = n$, $\Gamma_Y = - n$,
$\Lambda^-_X = 1$, and $\Lambda^-_Y =  2^i (-1)^{i+1}$.
Therefore, by \eqref{alphai},
$$\alpha_k =
\begin{cases}
\frac{1}{n^k}
\big( c_1^{S^1}(M) -n t  \big)^k
& 0 \leq k \leq i \\
\frac{1}{2 n^k}
\big( c_1^{S^1}(M) - n t \big)^{i+1}
\big( c_1^{S^1}(M) + n t \big)^{k-i-1}
& i < k \leq n.
\end{cases}
$$
In particular, \eqref{CB} implies that $\alpha_1|_X = u$,
$\alpha_1|_Y = v - 2t$, $\alpha_{i+1}|_X = 0$, and $\alpha_{i+1}|_Y
= \frac{1}{2}(v-2t)^{i+1}$. Hence, if we let $\xt = \alpha_1$ and
$\yt = \alpha_{i+1}$, then $\xt^{i+1} - 2 \yt=0$ and $ \yt \big(\yt
+ \textstyle\frac{1}{2} \big( (\xt+2t)^{i+1} - \xt^{i+1} \big) \big)
= 0$. (Note that the latter expression does lie in $\Z[\xt,\yt,t]$,
while the expression $\frac{1}{2}\yt(\xt+2t)^{i+1}$ does not.)
Moreover, part (B) of Theorem~\ref{B} and Lemma~\ref{Chern} imply
that
\begin{align*}
c^{S^1}(M)\big|_X &= \frac{ (1 + \xt)^{i + 1}
(1 + \xt + 2t)^{i + 1} (1 + \xt + t)}
{1 + 2 \xt + 2 t}\Big|_X ,
\quad  \mbox{and} \\
c^{S^1}(M)\big|_Y &= \frac{ (1 + \xt)^{i + 1}
(1 + \xt + 2t)^{i + 1} (1 + \xt + t)}
{1 + 2 \xt + 2 t}\Big|_Y .
\end{align*}
Since the restriction map  $H^*_{S^1}(M;\Z) \to H_{S^1}^*(M^{S^1};\Z)$ is injective, claim (B) follows
easily.

\end{proof}

Finally, as we showed in \S\ref{background}, the fact that
$H^*(M^{S^1};\Z)$ is torsion-free implies that $H^*(M;\Z) =
H_{S^1}^*(M;\Z)/(t)$, where $t$ generates
the image
$\pi^*(H^2(\CP^\infty;\Z)).$ (See \eqref{ordequi}.) Therefore,
Theorem~\ref{A} follows immediately from the proposition above.

\section{The case that the fixed components have minimal dimension}

Consider a  Hamiltonian circle action on a compact symplectic
manifold $(M,\omega)$. In this section, we consider the case that
the fixed components have minimal dimension,
that is,
\begin{equation}
\label{eqdim}
\sum_{F \subset M^{S^1}} \left( \dim(F)  + 2 \right) =
\dim(M) + 2.
\end{equation}

We first prove that \eqref{eqdim} holds whenever $M$ has {\bf
minimal even Betti numbers}, that is,
\begin{equation}
\label{eqbetti}
b_{2i}(M) = 1 \quad \forall \ i \in \big\{ 0,\dots,\textstyle\frac{1}{2} \dim(M) \big\},
\end{equation}
where $b_j = \dim \big( H^j(M;\R) \big)$ for all $j$.

\begin{lemma}\label{index}
Let the circle act on a compact symplectic manifold $(M, \omega)$
with moment map $\phi \colon M \to \R$. Assume that $b_{2i}(M) = 1$
for all $i \in \left\{0,\dots,\tfrac{1}{2}\dim(M)\right\}.$
Then
$\sum_F \left( \dim(F) + 2 \right) = \dim(M) + 2.$
\end{lemma}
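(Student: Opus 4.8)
The plan is to use the standard Morse-theoretic inequalities for the moment map $\phi$ and observe that the hypothesis $b_{2i}(M)=1$ for all $i$ forces these inequalities to be equalities. First I would recall that $\phi$ is a Morse--Bott function whose critical set is exactly $M^{S^1}$, and that each fixed component $F$ contributes to the Morse--Bott complex a shifted copy of $H^*(F)$, with the shift given by the index $2\lambda_F$. In particular the Morse--Bott inequalities give, for each degree $k$,
\begin{equation*}
b_k(M) \leq \sum_{F \subset M^{S^1}} b_{k - 2\lambda_F}(F).
\end{equation*}
Summing over all $k$ (or over all even $k$, noting that $\phi$ being an $S^1$-moment map forces every index $2\lambda_F$ to be even and each $F$ to have only even cohomology after a separate argument, but in fact for the total count one can sum over all $k$), we obtain
\begin{equation*}
\sum_k b_k(M) \leq \sum_{F} \sum_k b_{k - 2\lambda_F}(F) = \sum_{F} \sum_\ell b_\ell(F) = \sum_{F} \left( \sum_\ell b_\ell(F) \right).
\end{equation*}

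Next I would bring in the hypothesis. Since $b_{2i}(M)=1$ for all $i\in\{0,\dots,\tfrac12\dim M\}$ and $M$ is a $\dim(M)$-manifold, the total Betti number $\sum_k b_k(M)$ is at least $\tfrac12\dim(M)+1$; and a short argument (using that any symplectic manifold with $b_{2i}=1$ for the relevant range has no odd cohomology in that range, or simply that odd Betti numbers only decrease the relevant bound) shows $\sum_k b_k(M) = \tfrac12\dim(M)+1$. On the other side, each fixed component $F$ is itself a compact symplectic manifold (the restriction of $\omega$ to $F$ is symplectic, as $F$ is a symplectic submanifold), so $\sum_\ell b_\ell(F) \geq \tfrac12\dim(F)+1$. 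Combining,
\begin{equation*}
\tfrac12\dim(M) + 1 = \sum_k b_k(M) \leq \sum_F \left(\sum_\ell b_\ell(F)\right) \geq \sum_F \left( \tfrac12 \dim(F) + 1 \right).
\end{equation*}
Wait — the two inequalities point the same way only if I already know $\sum_F(\sum_\ell b_\ell(F))$ equals the lower bound, so the clean route is: the general inequality $\sum_F(\dim(F)+2)\geq \dim(M)+2$ is already established (it is quoted in the introduction and follows from $\phi$ being perfect over $\R$ for the bound, or just from $H^{2i}(M;\R)\neq 0$); I must prove the reverse inequality. For that I use that $\phi$ is \emph{equivariantly perfect} (Kirwan), hence perfect over $\R$, so the Morse--Bott inequality is an equality in every degree: $b_k(M) = \sum_F b_{k-2\lambda_F}(F)$. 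Summing over all $k$ gives $\sum_k b_k(M) = \sum_F \sum_\ell b_\ell(F) \geq \sum_F(\tfrac12\dim(F)+1)$, while the hypothesis forces $\sum_k b_k(M)=\tfrac12\dim(M)+1$. Therefore $\tfrac12\dim(M)+1 \geq \sum_F(\tfrac12\dim(F)+1)$, i.e. $\dim(M)+2 \geq \sum_F(\dim(F)+2)$, and combined with the reverse (already known) inequality we get equality.

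The main obstacle I anticipate is the bookkeeping around odd-degree classes: to conclude $\sum_k b_k(M) = \tfrac12\dim(M)+1$ from $b_{2i}(M)=1$ one must rule out (or at least not be hurt by) odd Betti numbers, and to conclude $\sum_\ell b_\ell(F)\geq \tfrac12\dim(F)+1$ one only needs that each $F$ is symplectic, which is standard. Since $\phi$ is perfect, $b_{\text{odd}}(M) = \sum_F b_{\text{odd-shift}}(F)$, so if $M$ has vanishing odd cohomology in the relevant range the same holds componentwise — but for the present count I don't even need this, since including odd classes only makes $\sum_k b_k(M)$ larger on the left, which would only help; the real point is that the hypothesis pins down \emph{all} the even Betti numbers to be exactly $1$, forcing equality. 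I would write this up carefully, citing the perfectness of $\phi$ (established in \S\ref{background}) and the elementary fact that a compact symplectic $2m$-manifold has $\sum_\ell b_\ell \geq m+1$, and conclude by comparing with the universally valid inequality $\sum_F(\dim(F)+2)\geq\dim(M)+2$.
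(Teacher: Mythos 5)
Your overall strategy -- perfection of $\phi$ over $\R$, the fact that every fixed component is symplectic and so has $b_{2j}(F)\geq 1$ for $0\leq 2j\leq \dim(F)$, and a counting comparison with the universally valid inequality $\sum_F(\dim(F)+2)\geq \dim(M)+2$ -- is sound, and once repaired it is essentially the paper's argument (the paper quotes Lemma 3.3 of \cite{T}: for each $i\in\{0,\dots,\frac{1}{2}\dim(M)\}$ there is a \emph{unique} fixed component $F$ with $0\leq 2i-2\lambda_F\leq \dim(F)$, and then counts; your even-degree Betti sum is the same count in different notation). However, as written there is a genuine gap in the step where you sum over \emph{all} degrees $k$ and assert that the hypothesis forces $\sum_k b_k(M)=\frac{1}{2}\dim(M)+1$. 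The hypothesis controls only the even Betti numbers; nothing rules out $b_{\mathrm{odd}}(M)>0$ at this stage, and then your chain only yields
$$\tfrac{1}{2}\dim(M)+1+\sum_k b_{2k+1}(M)\ \geq\ \sum_F\big(\tfrac{1}{2}\dim(F)+1\big),$$
which is too weak. Your fallback remark that odd classes ``only help'' has the inequality pointing the wrong way: enlarging $\sum_k b_k(M)$ \emph{weakens} the upper bound you need on $\sum_F(\frac{1}{2}\dim(F)+1)$, it does not strengthen it.

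The fix is simply to restrict to even degrees, and it needs none of the extra input you worried about. Since every index $2\lambda_F$ is even, perfection gives $b_{2i}(M)=\sum_F b_{2i-2\lambda_F}(F)$, an identity involving only \emph{even} Betti numbers of the components (so, contrary to your parenthetical, you do not need $F$ to have vanishing odd cohomology, nor $M$). Summing over $i\in\{0,\dots,\frac{1}{2}\dim(M)\}$ gives
$$\tfrac{1}{2}\dim(M)+1=\sum_F\sum_{0\leq 2j\leq \dim(F)} b_{2j}(F)\ \geq\ \sum_F\big(\tfrac{1}{2}\dim(F)+1\big),$$
because each $F$ is a compact symplectic manifold; combined with the reverse inequality from the introduction (or with the observation that the intervals $\{\lambda_F,\dots,\lambda_F+\frac{1}{2}\dim(F)\}$ must cover $\{0,\dots,\frac{1}{2}\dim(M)\}$), this yields the desired equality. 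With that correction your proof is complete and coincides in substance with the paper's.
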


\begin{proof}
This claim is an immediate consequence of Lemma 3.3 in \cite{T},
which states that, for each $i \in \left\{0,\dots,\frac{1}{2}\dim(M)\right\}$,
there exists a unique fixed component $F$ such
that $0 \leq 2i - 2 \lambda_F  \leq \dim(F).$
(Lemma 3.3 itself follows from the
facts that $\phi$ is a perfect Morse-Bott function
and that $H^{2i}(F;\R) \neq 0$ for all
fixed components $F$ and $i \in \left\{0,\dots,\frac{1}{2} \dim(F)\right\})$.
\end{proof}

The following proposition -- which is the main result in this section --
gives a partial converse.

\begin{proposition}\label{implies}
Let the circle act on a compact symplectic manifold $(M, \omega)$
with moment map $\phi \colon M \to \R$.
Assume that $\sum_{F \subset M^{S^1}}
\left( \dim(F) + 2 \right) = \dim(M) + 2$.
Let $X$ be the minimal fixed component.
\begin{enumerate}
\item $H^i(M;\R) = H^i(\CP^n;\R)$ for all $i \in \{0,\dots,\dim(X) + 2\}$.
\item
$H^{*}(X;\R) = \R[u]/u^{\frac{1}{2}\dim(X) + 1}$, where $u = [\omega|_X]$.
\item  If $[\omega]$ is integral and
the integers $\{ \phi(X) - \phi(F)\}_{F \subset (M\smallsetminus X)^{S^1}} $ are relatively prime\footnote{
A set of integers is {\bf relatively prime} if their greatest common divisor is $1$.},
then $H^*(X;\Z) = \Z[u]/u^{\frac{1}{2}\dim(X) + 1}$, where $u$ maps
to $[\omega|_X].$
\end{enumerate}
\end{proposition}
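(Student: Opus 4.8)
\textbf{Proof proposal for Proposition~\ref{implies}.}

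The plan is to exploit the hypothesis $\sum_F (\dim(F)+2) = \dim(M)+2$ together with the fact that $\phi$ is a perfect Morse--Bott function. First I would set up the basic combinatorics: for each $i \in \{0,\dots,n\}$ Lemma~3.3 of \cite{T} (quoted in the proof of Lemma~\ref{index}) gives a unique fixed component $F$ with $0 \le 2i - 2\lambda_F \le \dim(F)$; the minimality hypothesis forces this correspondence to be a bijection between the integers $\{0,\dots,n\}$ and the ``slots'' $\{(F, k) : 0 \le 2k \le \dim(F)\}$. In particular $b_{2i}(M) = 1$ for all $i$, so the real cohomology looks like that of $\CP^n$ in every even degree; since $\phi$ is perfect and the odd cohomology of each $F$ can only contribute in the indicated range, part (1) for $i \le \dim(X)+2$ follows once one checks that the minimal component $X$ accounts for degrees $0,2,\dots,\dim(X)$ and that the only fixed component contributing in degrees $\dim(X)+1, \dim(X)+2$ does so via its degree $0$ or degree $2$ cohomology, which is torsion-free; I would phrase this as a short exact sequence argument using \eqref{short}/\eqref{short2} localized to the bottom of the moment map.

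For part (2), the key point is that $X$ being the minimal fixed component means $\lambda_X = 0$, so a neighborhood $M^- = \phi^{-1}(-\infty, \phi(X)+\epsilon)$ deformation retracts onto $X$ and the restriction $H^*(M;\R) \to H^*(X;\R)$ is surjective in the relevant range. Combined with part (1), this gives $\dim H^{2k}(X;\R) \ge 1$ for $k \le \frac{1}{2}\dim(X)$; but $H^{2k}(X;\R) \ne 0$ already for such $k$ since $X$ is symplectic (it inherits a symplectic form, e.g.\ the restriction of $\omega$, which is where $u = [\omega|_X]$ enters), and the bijection above forces equality $\dim H^{2k}(X;\R) = 1$. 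Then one needs to see that $u = [\omega|_X]$ generates the ring: $u^k \ne 0$ for $k \le \frac{1}{2}\dim(X)$ because $[\omega|_X]$ is a symplectic class on the $\dim(X)$-dimensional manifold $X$, and since each graded piece is one-dimensional, $u^k$ spans $H^{2k}(X;\R)$. This yields $H^*(X;\R) = \R[u]/u^{\frac12\dim(X)+1}$. I should also confirm $H^{\mathrm{odd}}(X;\R) = 0$, which again comes from the bijection: no integer $i$ is left over to be matched to an odd-degree class of any fixed component.

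Part (3) is the integral refinement and is where the real work lies. By Lemma~\ref{uvt}, since $[\omega]$ is integral there is an integral equivariant class $\ut \in H^2_{S^1}(M;\R)$ with $\ut|_{F'} = [\omega|_{F'}] + t(\phi(X) - \phi(F'))$ for all $F'$; in particular $\ut|_X = [\omega|_X] = u$. The strategy is to show that the powers $\ut^0, \ut^1, \dots, \ut^{\frac12\dim(X)}$, after suitable bookkeeping, already generate $H^*(X;\Z)$ over $\Z$ in the degrees up to $\dim(X)$ — equivalently that $u^k$ is a \emph{primitive} integral class for each such $k$. The relative-primeness hypothesis is exactly what is needed here: restricting $\ut$ to the various fixed components $F' \ne X$ and using that $\kappa_c$ of an appropriate combination is the reduced symplectic form, one extracts the relation that the greatest common divisor of the integers $\phi(X)-\phi(F')$ controls the index of the subgroup of $H^*(X;\Z)$ generated by restrictions of global integral classes. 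More concretely, I expect to argue by induction on $k$: assuming $H^{2\ell}(X;\Z) = \Z\langle u^\ell\rangle$ for $\ell < k$, one writes an arbitrary generator of $H^{2k}(X;\Z)$ as $\frac{1}{m} u^k$ for some integer $m$, lifts $u$ to $\ut$, considers $\ut^k \in H^{2k}_{S^1}(M;\R)$, and uses Proposition~\ref{multiple-Euler} or a direct localization computation at the fixed components below $X$ in reverse (i.e.\ above, via the class $\ut - t(\phi(X) - c)$ and the Kirwan map at a regular value $c$ just above $\phi(X)$) to see that $m$ must divide $\gcd\{\phi(X) - \phi(F')\} = 1$. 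The main obstacle will be controlling the integral (as opposed to rational) structure through the Kirwan/localization machinery — making precise that the denominators appearing in $H^*(X;\Z)$ are governed exactly by those gcd's, rather than picking up spurious factors from torsion in intermediate reduced spaces. I would handle this by working with the explicit integral equivariant basis of Proposition~3.9 of \cite{T} (the classes $\alpha_i$ of \eqref{alphai}) restricted to $X$, since their restrictions to the minimal component are forced to be the primitive generators up to the normalizing integers $m_i$, and then showing $m_i = 1$ in degrees $\le \dim(X)$ precisely under the relative-primeness assumption.
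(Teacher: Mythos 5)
The central gap is in your treatment of parts (1) and (2). The hypothesis $\sum_F(\dim(F)+2)=\dim(M)+2$ together with perfection of $\phi$ only yields Lemma~\ref{index'}: for each $i$ there is a \emph{unique} fixed component $F$ with $0\le 2i-2\lambda_F\le\dim(F)$. That is a statement about the number, dimensions and indices of the components; it gives the lower bound $b_{2i}(M)\ge 1$ but places no upper bound on the Betti numbers of any component, so your claims that ``the bijection forces $\dim H^{2k}(X;\R)=1$'' and that $H^{\mathrm{odd}}(X;\R)=0$ ``comes from the bijection'' do not follow. Indeed Remark~\ref{compare} exhibits an action satisfying \eqref{eqdim} in which a fixed component has a two-dimensional middle Betti number, so \eqref{eqdim} cannot control the cohomology of the components by pure counting. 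Moreover, since perfection gives $b_j(M)=b_j(X)$ for $j\le\dim(X)+1$, your part (1) is essentially equivalent to part (2), and deducing (2) from (1) is circular. The missing idea is the paper's Lemma~\ref{betti0}: take $\ut$ from Lemma~\ref{uvt}, so $\ut|_X=[\omega|_X]$ and $\ut|_y=t(\phi(X)-\phi(y))\neq 0$ for any fixed $y\notin X$; if $X$ carried an extra class $\alpha$ (odd degree, or even degree not a multiple of a power of $u$), one lifts it (using the index gap $\dim(X)\le 2\lambda_F-2$) to $\at$ with $\at|_y=0$, uses Poincar\'e duality over the field to find $\beta$ with $\alpha\cup\beta=u^{\frac12\dim(X)}$, lifts $\beta$ to $\bt$, concludes $\ut^{\frac12\dim(X)}=\at\cup\bt$, and gets a contradiction by restricting to $y$. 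Nothing in your proposal replaces this duality argument.

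For part (3), your plan to invoke the basis \eqref{alphai} of Proposition 3.9 of \cite{T} is not available: that proposition assumes $H^*(M;\Z)\cong H^*(\CP^n;\Z)$, which is neither a hypothesis here nor true in this generality; and in any case primitivity of the powers $u^k$ alone would not give $H^*(X;\Z)=\Z[u]/u^{\frac12\dim(X)+1}$, since you must also rule out torsion and odd-degree integral classes, which your sketch never addresses. The paper's route is to rerun Lemma~\ref{betti0} with $\F=\Z_p$ for \emph{every} prime $p$: relative primeness of $\{\phi(X)-\phi(F)\}$ supplies, for each $p$, a fixed point $y$ with $\phi(X)-\phi(y)\not\equiv 0 \bmod p$, hence $\ut|_y\neq 0$ in $\Z_p$-coefficients, so $H^*(X;\Z_p)=\Z_p[u]/u^{\frac12\dim(X)+1}$ for all $p$; this simultaneously forces $H^*(X;\Z)$ to be torsion-free (Lemma~\ref{torsion}) and each $u^i$ to be primitive, which gives (3). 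That is exactly where the gcd hypothesis enters, rather than through a Kirwan-map divisibility count.
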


\begin{remark}\label{compare}
If $M$ is $6$-dimensional, then
\eqref{eqdim}
implies \eqref{eqbetti}.  The same claim holds if  $M$ is $8$-dimensional,
unless $M^{S^1}$ has exactly three components:
a  minimal point, a maximal point,
and a $4$-dimensional component of index $2$.
However, \eqref{eqdim} does not imply \eqref{eqbetti} in this case.

To see the first two claims, consider a Hamiltonian circle action
on a compact symplectic manifold $(M,\omega)$
which satisfies \eqref{eqdim}.
Clearly, $H^{0}(F;\R) = H^{\dim(F)}(F;\R) = \R$ for  every fixed component $F$.
Therefore, if every fixed component has dimension $0$
or $2$, then \eqref{eqbetti} follows from  Lemma~\ref{index'} below.
In the remaining cases considered above, \eqref{eqbetti} follows from Lemma~\ref{index'}
and Poincar\'e duality on $M$ and $F$.

To see the last claim,  note that for any
$n > 2$ there is a Hamiltonian circle action on $\Gt_2(\R^{n+2})$
induced by the action on $\R^{n+2} \simeq \C \times \R^n$
given by
$$\lambda \cdot (z,x_1,\dots,x_n) =
( \lambda z,x_1,\dots,x_n).$$
(See Example~\ref{ex2}.)
The fixed set has three components.
Two are isolated fixed points which correspond
to the orientations on the real two-plane $\C \times \{0\}$.
The third component  has dimension $2n - 4$ and
corresponds to the set of oriented real two-planes
in $\{ 0 \} \times \R^n$.
Hence, $(0+2) + (0+2) + (2n-4+2) = 2n+2$, as required
by \eqref{eqdim}.
However, if $n$ is even, then $H^{n}\big(\Gt_2(\R^{n+2});\R\big) = \R^2$.
\end{remark}

To prove Proposition~\ref{implies}, we will need the following
analog of Lemma 3.3 in \cite{T}.

\begin{lemma}\label{index'}
Let the circle act on a compact symplectic manifold $(M, \omega)$
with moment map $\phi \colon M \to \R$. Assume that $\sum_{F \subset
M^{S^1}} \left( \dim(F) + 2 \right) = \dim(M) + 2$.
\begin{itemize}
\item
For each $i \in \left\{0,\dots,\frac{1}{2}\dim(M)\right\}$,
there exists a unique fixed component $F$ such
that $0 \leq 2i - 2\lambda_F \leq \dim(F)$.
\item In particular, if $X$ is the minimal fixed component,
then $\dim(X) \leq 2\lambda_F - 2$
for all other fixed components $F$.
\end{itemize}
\end{lemma}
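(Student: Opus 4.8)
The plan is to adapt the argument behind Lemma 3.3 of \cite{T} — which identifies, for each cohomological degree, a unique fixed component carrying that degree — using only the hypothesis $\sum_F(\dim(F)+2) = \dim(M)+2$ in place of the stronger minimal-Betti-number condition. First I would recall that $\phi$ is a perfect Morse–Bott function with $\R$-coefficients (this is the Atiyah–Bott–Kirwan fact explained in \S\ref{background}): the Poincar\'e polynomial of $M$ equals $\sum_F t^{2\lambda_F}P_F(t)$, where $P_F(t) = \sum_j b_j(F)t^j$ is the Poincar\'e polynomial of $F$. Since $M$ is symplectic of dimension $2n$, the coefficient of $t^{2i}$ in the left-hand side is at least $1$ for every $i \in \{0,\dots,n\}$, so each degree $2i$ must be "hit" by at least one summand, i.e.\ there is at least one $F$ with $2\lambda_F \le 2i \le 2\lambda_F + \dim(F)$. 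The heart of the matter is a counting argument showing that the total number of degrees each $F$ can cover, summed over $F$, is exactly $n+1$ — forcing the covering to be a partition and hence each degree to be covered by exactly one $F$.

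Concretely, the second step is the bookkeeping: a fixed component $F$ can contribute to degrees $2i$ only for $i \in \{\lambda_F,\dots,\lambda_F + \tfrac12\dim(F)\}$, which is an interval of $\tfrac12\dim(F)+1$ integers. Therefore
\begin{equation*}
\sum_{F \subset M^{S^1}} \left(\tfrac12\dim(F)+1\right) = \tfrac12\Big(\sum_F\big(\dim(F)+2\big)\Big) = \tfrac12\big(\dim(M)+2\big) = n+1,
\end{equation*}
using the hypothesis. Since the $n+1$ degrees $\{0,2,\dots,2n\}$ must each be covered and the total covering capacity is exactly $n+1$, each degree is covered exactly once; this gives the first bullet. (One should also note that the extreme degrees $0$ and $2n$ are covered by the minimum $X$ and the maximum, respectively, by the usual Morse theory near the critical values of $\phi$; this pins down the endpoints of the partition.)

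For the second bullet, apply the first with $i$ chosen so that $2i$ is just above the top degree of $X$: since $X$ is the minimal fixed component, $\lambda_X = 0$, so $X$ covers exactly the degrees $\{0,2,\dots,\dim(X)\}$. Take $i = \tfrac12\dim(X)+1$. By the uniqueness in the first bullet, the degree $2i = \dim(X)+2$ is covered by a unique fixed component, which cannot be $X$; call it $F$. Then $2\lambda_F \le \dim(X)+2$. If we had $2\lambda_F = \dim(X)+2$ we would be done with strict inequality reversed, but in fact we need $\dim(X) \le 2\lambda_F - 2$, i.e.\ $2\lambda_F \ge \dim(X)+2$; this follows because the degrees $\{0,\dots,\dim(X)\}$ are already exhausted by $X$ alone, so no other component $F'$ can cover any degree $\le \dim(X)$, forcing $2\lambda_{F'} > \dim(X)$, hence $2\lambda_{F'} \ge \dim(X)+2$ for every $F' \ne X$.

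The main obstacle I anticipate is making the "covering is a partition" step fully rigorous rather than merely a dimension count: one must ensure that the intervals $\{\lambda_F,\dots,\lambda_F+\tfrac12\dim(F)\}$ are forced to be pairwise disjoint and to tile $\{0,\dots,n\}$, which uses both that their total length is exactly $n+1$ \emph{and} that their union is all of $\{0,\dots,n\}$ (the latter from $H^{2i}(M;\R)\ne 0$). A set of intervals of integers whose lengths sum to the size of the ground set and whose union is the whole ground set must be a partition — this is elementary but deserves a sentence. Everything else is the perfect Morse–Bott machinery already quoted from \S\ref{background} and \cite{T}.
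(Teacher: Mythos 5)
Your proposal is correct and follows essentially the same route as the paper: use $H^{2i}(M;\R)\neq 0$ plus the Morse--Bott (in)equalities to see that every even degree is covered by some component, then observe that the hypothesis $\sum_F(\dim(F)+2)=\dim(M)+2$ makes the total covering capacity exactly $n+1$, forcing uniqueness; the paper's proof is just a terse three-sentence version of this, and your second bullet (degrees $\le\dim(X)$ are exhausted by $X$, so $2\lambda_F\ge\dim(X)+2$ for all other $F$) is the intended deduction.
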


\begin{proof}
Since $M$ is symplectic $H^{2i}(M;\R) \neq 0$ for all $i \in
\{0,\dots,\frac{1}{2} \dim(M)\}$. Since $\phi$ is a  Morse-Bott
function,  there is at least one fixed component
$F$ such that $0 \leq 2i - 2\lambda_F \leq \dim(F)$. Since
$\sum_{F \subset M^{S^1}} \left( \dim(F) + 2 \right) = \dim(M) + 2$, this
proves the claim.
\end{proof}

\begin{remark}\label{extend}
Consider a Hamiltonian circle action on a compact
symplectic manifold $(M,\omega)$; assume that \eqref{eqdim} holds.
Although we will not use them in this paper,
several of the results in \S 3 of \cite{T} still work in this context
if we use Lemma~\ref{index'} above instead of Lemma 3.3 in \cite{T}.
For example,  the proof of Proposition 3.4 and Lemma 3.7 in \cite{T}
otherwise go through without any changes.
Therefore,
for all fixed components $F$ and $F'$,
\begin{gather*}
\phi(F') < \phi(F) \quad \mbox{exactly if} \quad \lambda_{F'} < \lambda_F;
\quad \mbox{moreover} \\\
H^j(M;\Z) = \bigoplus_{F \subset M^{S^1}} H^{j-2\lambda_F}(F;\Z) \quad \forall \ j,
\end{gather*}
where the sum is over all fixed components.
\end{remark}

\begin{lemma}\label{betti0}
Let the circle act on a compact symplectic manifold $(M, \omega)$
with moment map $\phi \colon M \to \R$.  Let $X$ be the minimal
fixed component and let $\F$ be a field. Assume that $\dim(X) \leq  2
\lambda_F- 2$ for all other fixed components $F$. Assume also that
there exist classes $\ut \in H^2_{S^1}(M; \F)$ and  $u \in H^2(X;
\F)$, such that $\ut|_X = u$,  and a fixed point $y$ such that
$\ut|_y  \neq 0$. Then
$$H^*(X;\F) = \F[u]/u^{\frac{1}{2} \dim(X)+1}.$$
\end{lemma}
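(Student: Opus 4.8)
The plan is to exploit the Morse-theoretic decomposition coming from the moment map, together with the dimension hypothesis, to pin down $H^*(X;\F)$ as a truncated polynomial ring on $u$. First I would observe that, since $X$ is the minimal fixed component and $\dim(X) \le 2\lambda_F - 2$ for every other fixed component $F$, there are no fixed components strictly between $X$ and the second-lowest critical level, and more to the point, in the range of degrees $* \le \dim(X)$ the Morse stratification sees \emph{only} $X$: every other component $F$ contributes in degrees $\ge 2\lambda_F \ge \dim(X)+2$. Concretely, setting $M^+ = \phi^{-1}(-\infty, \phi(X)+\epsilon)$ for small $\epsilon$, the submanifold $M^+$ equivariantly deformation retracts onto $X$, so $H^*_{S^1}(M^+;\F) = H^*_{S^1}(X;\F) = H^*(X;\F)[t]$; and for $* \le \dim(X) \le 2\lambda_F - 2$ the relative groups $H^*_{S^1}(M^{\ge}, M^+)$ vanish, so $H^*_{S^1}(M;\F) \to H^*_{S^1}(M^+;\F) = H^*(X;\F)[t]$ is an isomorphism in that degree range (this is exactly the parenthetical remark in \S\ref{background} about $H^j_{S^1}(M^+,M^-;R)$ vanishing for $j \le 2\lambda_F - 2$, applied componentwise). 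Hence, up to degree $\dim(X)$, the equivariant cohomology of $M$ \emph{is} $H^*(X;\F)[t]$.

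Next I would use the class $\ut$ and the Atiyah--Bott / Kirwan injectivity input. The key structural fact is that $\ut|_X = u$ while $\ut|_y \neq 0$ for some other fixed point (hence for some fixed component $F \neq X$, whose restriction lands in $H^0_{S^1}(F;\F)$, i.e.\ $\ut|_F$ has a nonzero pure-$t$ term $\Gamma\, t$ with $\Gamma \ne 0$). I want to show $u^{k} \neq 0$ in $H^{2k}(X;\F)$ for all $k \le \tfrac12\dim(X)$; combined with $u^{\frac12\dim(X)+1} = 0$ (which holds for degree reasons, since $H^{>\dim(X)}(X) = 0$) and the fact that $\dim_\F H^{2k}(X;\F) \ge 1$ for all such $k$ because $X$ is a compact symplectic manifold, this would force $H^{2k}(X;\F) = \F\cdot u^k$ for each $k$ and give the result. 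To see $u^k \ne 0$: consider the class $\ut^{k} \in H^{2k}_{S^1}(M;\F)$ with $2k \le \dim(X)$. By the degree-range isomorphism above, $\ut^k$ is detected by its restriction to $M^+$, i.e.\ by $u^k + (\text{lower-in-}u,\ \text{higher-in-}t\ \text{terms})\in H^*(X;\F)[t]$. If $u^k$ were zero, then $\ut^k$ restricted to $X$ would be a polynomial in $t$ with $H^{>0}(X)$-coefficients... — here I need to be slightly careful and instead argue via $\ut^k|_y$: since $\ut|_y \ne 0$ and $H^*_{S^1}(y;\F) = \F[t]$ is a domain, $\ut^k|_y \ne 0$; combined with the injectivity of $H^*_{S^1}(M;\F) \to \bigoplus_F H^*_{S^1}(F;\F)$ (valid here because $H^*(M^{S^1};\Z)$ is torsion-free in the relevant low degrees, or simply over a field after suitable care) this shows $\ut^k \ne 0$ in $H^{2k}_{S^1}(M;\F)$, and then the degree-range isomorphism identifies $\ut^k$ with an element of $H^{2k}(X;\F)[t]$ whose leading-in-$t$ behaviour is controlled; pushing this through shows the coefficient $u^k$ cannot vanish, because the $t$-adic filtration on $H^*_{S^1}(M;\F)$ is compatible with restriction to $X$ and $\ut - t(\phi(X)-\phi(\cdot))$-type normalizations (as in Lemma~\ref{uvt}) let one read off $u^k$ as the "$t^0$ part" of $\ut^k|_X$.

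The cleanest way to organize the last step, which I would adopt, is: the ring map $H^*_{S^1}(M;\F) \to H^*(X;\F)[t] \to H^*(X;\F)$ (restrict to $X$, then set $t=0$) sends $\ut \mapsto u$, hence $\ut^k \mapsto u^k$; meanwhile restriction-to-$y$-then-inspect-top-$t$-degree sends $\ut^k$ to a nonzero multiple of $t^k$. One then shows directly that if $u^k = 0$ in $H^*(X;\F)$ then $\ut^k$ would already be divisible by $t$ in $H^*_{S^1}(M;\F)$ in the degree range $2k \le \dim(X)$ (again using $H^*_{S^1}(M) \cong H^*(X)[t]$ there), but $\ut^k|_y$ has a nonzero $t^0 \cdot t^k = t^k$ coefficient relative to the obvious grading — contradiction unless we track degrees correctly. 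I expect the main obstacle to be precisely this bookkeeping: making rigorous the claim that in the degree range $* \le \dim(X)$ the class $\ut^k$ "is" $u^k$ plus $t$-multiples, and that nonvanishing of $\ut^k$ (guaranteed via the point $y$) therefore forces nonvanishing of $u^k$ — i.e.\ ruling out the possibility that all the "content" of $\ut^k$ sits in the $t$-divisible part while $u^k = 0$. The resolution is the isomorphism $H^{2k}_{S^1}(M;\F) \cong H^{2k}(X;\F) \oplus t H^{2k-2}_{S^1}(M;\F)$ for $2k \le \dim(X)$ (which itself follows from the retraction $M^+ \simeq X$ and vanishing of the relevant relative groups), under which the $H^{2k}(X;\F)$-summand is exactly $u^k \cdot \F$-worth of data coming from $\ut^k$; once that splitting is in hand, nonvanishing of $\ut^k$ modulo $t$ is equivalent to $u^k \ne 0$, and nonvanishing of $\ut^k$ modulo $t$ is forced by $\ut|_y \ne 0$ together with the structure of the $S^1$-equivariant cohomology of the minimum. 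Everything else — that $u^{\frac12\dim(X)+1} = 0$ and that each $H^{2k}(X;\F)$ is at least one-dimensional — is standard, so assembling these gives $H^*(X;\F) = \F[u]/u^{\frac12\dim(X)+1}$.
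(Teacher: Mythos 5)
There is a genuine gap: your plan proves only the \emph{lower} bound on $H^*(X;\F)$, while the actual content of the lemma is the \emph{upper} bound. The conclusion $H^*(X;\F)=\F[u]/u^{\frac12\dim(X)+1}$ asserts that $H^{\mathrm{odd}}(X;\F)=0$ and that each $H^{2k}(X;\F)$ is exactly one-dimensional, spanned by $u^k$. What your outline establishes is that $u^k\neq 0$ for $2k\le\dim(X)$ -- and that part is in fact a one-line consequence of the degree-range isomorphism you correctly set up: $\ut^k|_X=u^k$ (no bookkeeping about $t$-adic filtrations is needed, since $\ut|_X=u$ on the nose), restriction to $X$ is an isomorphism in degrees $\le\dim(X)\le 2\lambda_F-2$, and $\ut^k|_y=(\ut|_y)^k\neq 0$ in $\F[t]$. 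But the inference ``$u^k\neq 0$, combined with $\dim_\F H^{2k}(X;\F)\ge 1$, forces $H^{2k}(X;\F)=\F\cdot u^k$'' is a non sequitur: a lower bound on Betti numbers cannot yield the required upper bound, and nothing in your argument excludes odd-degree classes or even-degree classes linearly independent of the powers of $u$. Ruling those out is precisely what the lemma is for.

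The paper's proof supplies the missing mechanism, and some version of it must be added to your plan. Arguing by contradiction, it takes any offending class $\alpha$ (either of odd degree, or of degree $2i$ and not a multiple of $u^i$), lifts it to $\at\in H^*_{S^1}(M;\F)$ with $\at|_X=\alpha$ and, crucially, $\at|_y=0$ (in the odd case because $H^{\mathrm{odd}}(\CP^\infty;\F)=0$; in the even case by replacing $\at$ with $\at-\frac{\at|_y}{\ut^i|_y}\,\ut^i$). Poincar\'e duality over the field $\F$ then produces $\beta$ with $\alpha\cup\beta=u^{\frac12\dim(X)}$, which is lifted to $\bt$; injectivity of restriction to $X$ in degrees $\le\dim(X)$ forces $\ut^{\frac12\dim(X)}=\at\cup\bt$, and restricting to $y$ gives $(\ut|_y)^{\frac12\dim(X)}\neq 0$ on one side and $\at|_y\cup\bt|_y=0$ on the other -- a contradiction. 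Without such a pairing argument (or some other device giving an upper bound on $H^*(X;\F)$), your proposal does not prove the statement.
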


\begin{proof}
Assume that, on the contrary, $H^*(X;\F) \neq
\F[u]/u^{\frac{1}{2} \dim(X)+1}$.

First, we claim that there exist $\alpha \in H^{j}(X;\F)$ and $\at
\in H^j_{S^1}(M;\F)$ such that $\alpha \neq 0$, $\at|_X = \alpha$,
and $\at|_y = 0$. To see this, note that at least
one of the following is true:
\begin{itemize}
\item [(a)] there exists a non-zero class $\alpha \in H^{2i+1}(X;\F)$
for some $i$; or
\item [(b)] there exists a class $\alpha \in H^{2i}(X;\F)$
which is not a multiple of $u^i$ for some $i$. (Since $X$ is symplectic;
$H^{2i}(X;\F) \neq 0$ for all $i \in \left\{0,\dots,\frac{1}{2}\dim(X) \right\}$.)
\end{itemize}
If (a) is true, then since $\dim(X) < 2 \lambda_F$ for all other
fixed components $F$, there exists a class $\at \in
H^{2i+1}_{S^1}(M;\F)$ such that $\at\big|_X = \alpha$.
(See \eqref{short}.) Since
$H^{2i+1}(\CP^\infty;\F) = 0$, $\at\big|_y = 0$. Similarly,
if (b) is true then there exists $\at \in H^{2i}_{S^1}(M; \F)$ such
that $\at\big|_X = \alpha$. Since $\ut^i\big|_y \neq 0$,
we can define $\lambda = \frac{\at|_y}{\ut^i|_y}$ and then replace
$\alpha$ by $\alpha - \lambda u^i$ and $\at$ by $\at - \lambda
\ut^i$.

Since $\F$ is a field, Poincar\'e duality implies that there exists
a class $\beta \in H^{\dim(X) - j}(X;\F)$ such that $\alpha \cup
\beta = u^{\frac{1}{2} \dim(X)}$.  As before, there exists $\bt \in
H_{S^1}^{\dim(X) - j}(M; \F)$ such that $\bt\big|_X = \beta$. Since
$\ut^{\frac{1}{2}\dim(X)}\big|_X =  \left( \at \cup \bt
\right)\big|_X$, and since $\dim(X) \leq  2 \lambda_F - 2$ for all
other fixed components $F$, we can conclude that
$\ut^{\frac{1}{2}\dim(X)} =   \at \cup \bt $.  But $ (\at \cup
\bt)\big|_y = \at\big|_y \cup \bt\big|_y  = 0$, while
$\ut^{\frac{1}{2}\dim(X)}\big|_y  = \big( \ut\big|_y
\big)^{\frac{1}{2}\dim(X)} \neq 0$. This gives a contradiction.
\end{proof}

We are now ready to prove our main result.

\begin{proof}[Proof of Proposition~\ref{implies}.]
By Lemma~\ref{index'}, $\dim(X) \leq 2\lambda_F - 2$ for every other fixed
component $F$.
Moreover, there is exactly one fixed component $F$ with $2\lambda_F = \dim(X) + 2$.

By Lemma~\ref{uvt}, there exists $\ut \in H^2_{S^1}(M;\R)$
such that $\ut|_X = [\omega|_X]$ and $\ut|_y = t (\phi(X) - \phi(y)) \neq 0$
for all fixed points $y \not\in X$.
Since $\phi$ is a perfect Morse-Bott function,
claims (1) and  (2) are an immediate consequence of Lemma~\ref{betti0}.

If $[\omega]$ is integral, then by Lemma~\ref{uvt}
there exists
$\ut \in H^2_{S^1}(M;\Z)$ and $u \in H^2(X;\Z)$
so that $\ut|_X = u$, $\ut|_y = t(\phi(X) - \phi(y))$ for all fixed
points $y$, and $u$ maps to $[\omega|_X] \in H^2(X;\R)$.
If the integers $\{\phi(X) - \phi(F)\}_{F \subset (M\smallsetminus X)^{S^1}}$ are relatively prime,
then for any prime $p$ there exists a fixed point $y$ so
that $\phi(X) - \phi(y) \neq 0 \mod p$.
Therefore, by Lemma~\ref{betti0},
$H^*(X;\Z_p) = \Z_p[u]/u^{\frac{1}{2}\dim(X) + 1}$.
On the one hand, by Lemma~\ref{torsion} below, this implies that
$H^*(X;\Z)$ is torsion-free.
On the other hand, it implies that $u^i$ is primitive
for all $i \in \{0,\dots,\frac{1}{2}\dim(X)\}$.
Claim (3) follows immediately.
\end{proof}

\begin{lemma}\label{torsion}
Let $X$ be a compact manifold.
Assume that $H^{2i+1}(X;\Z_p) = 0$ for all $i$ and all primes $p$.
Then $H^*(X;\Z)$ is torsion free.
\end{lemma}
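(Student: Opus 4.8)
The statement to prove is Lemma~\ref{torsion}: if $X$ is a compact manifold with $H^{2i+1}(X;\Z_p) = 0$ for all $i$ and all primes $p$, then $H^*(X;\Z)$ is torsion-free. The plan is to argue via the universal coefficient theorem for cohomology, which for each $n$ and each prime $p$ gives a short exact sequence
\[
0 \to H^n(X;\Z) \otimes \Z_p \to H^n(X;\Z_p) \to \operatorname{Tor}(H^{n+1}(X;\Z),\Z_p) \to 0.
\]
(Since $X$ is compact, all cohomology groups are finitely generated, so $H^n(X;\Z) \cong \Z^{b_n} \oplus T_n$ with $T_n$ a finite abelian group, and the Tor term is exactly the $p$-torsion $T_{n+1}[p] = \operatorname{Tor}(T_{n+1},\Z_p)$.)

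First I would run this for $n = 2i$ even. The hypothesis says $H^{2i+1}(X;\Z_p) = 0$, hence the left-hand term $H^{2i}(X;\Z)\otimes\Z_p$ embeds in a zero group only after we note that $H^{2i+1}(X;\Z_p)=0$ forces both the middle term for $n=2i+1$ to vanish; more directly, apply the sequence with $n=2i+1$: the middle term $H^{2i+1}(X;\Z_p)=0$ forces $\operatorname{Tor}(H^{2i+2}(X;\Z),\Z_p)=0$, i.e.\ $H^{2i+2}(X;\Z)$ has no $p$-torsion. Thus every even-degree integral cohomology group is $p$-torsion-free for every prime $p$, hence torsion-free. Next, to handle odd degrees, use the sequence with $n=2i+1$ again but now reading off the left term: $H^{2i+1}(X;\Z)\otimes\Z_p \hookrightarrow H^{2i+1}(X;\Z_p) = 0$, so $H^{2i+1}(X;\Z)\otimes\Z_p = 0$ for all $p$; combined with finite generation this forces $H^{2i+1}(X;\Z) = 0$. (In particular the odd integral cohomology vanishes entirely, which is more than torsion-freeness.) Putting the two cases together, $H^n(X;\Z)$ is torsion-free for every $n$.

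I expect no serious obstacle here; the only point requiring a little care is the appeal to finite generation of $H^*(X;\Z)$, which holds because $X$ is a compact manifold (finite CW type), so that the Tor terms really are the full $p$-primary torsion subgroups and the conclusion "$T\otimes\Z_p = 0$ for all $p$ $\Rightarrow$ $T$ torsion-free" (indeed $=0$ in the odd case) is legitimate. One should also note that the hypothesis is used for \emph{all} primes $p$ simultaneously: a single prime would only kill $p$-torsion, whereas ranging over all $p$ kills all torsion. This is exactly the form in which the lemma is invoked in the proof of Proposition~\ref{implies}, where $H^*(X;\Z_p) = \Z_p[u]/u^{\frac12\dim(X)+1}$ is known for every prime $p$, so in particular the odd $\Z_p$-cohomology vanishes for every $p$.
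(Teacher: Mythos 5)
Your proof is correct and follows essentially the same route as the paper: both are immediate applications of the universal coefficient theorem together with finite generation of the (co)homology of the compact manifold $X$. The only cosmetic difference is that the paper works with the $\Hom/\Ext$ sequence from integral homology to $\Z_p$-cohomology, whereas you use the tensor/Tor (equivalently, Bockstein) sequence from integral cohomology; the mechanism --- torsion would force nonzero odd-degree mod $p$ cohomology for some prime $p$ --- is identical.
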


\begin{proof}
 Since $X$ is compact, the homology ring of $X$ is
finitely generated. Moreover, $\Hom(\Z,\Z_p) = \Z_p$ for all primes
$p$, while $\Hom(\Z_q,\Z_p) = \Z_p$ and $\Ext(\Z_q,\Z_p) = \Z_p$  if
$p$ divides $q$. Therefore, the claim follows immediately from the
universal coefficient theorem.
\end{proof}

\begin{lemma}\label{euler}
Let the circle act on a compact connected symplectic manifold $(M, \omega)$
with moment map $\phi \colon M \to \R$.
Let $X$ be the minimal fixed component.
Let $e^{S^1}(N_X) \in H^{\dim(M) - \dim(X)}_{S^1}(X;\R)$ be the
equivariant Euler class of the
normal bundle of
$X$, and let $\Lambda_X$ be the product of the weights (with multiplicity) on
the normal bundle of $X$.
\begin{enumerate}
\item
If $\sum_{F \subset M^{S^1}} \left (\dim(F) + 2 \right) = \dim(M) + 2,$
then
\begin{equation*}
 e^{S^1}(N_X) = \Lambda_X
\negthickspace \negthickspace \negthickspace
 \negthickspace
 \prod_{F \subset
\left( M \smallsetminus X\right)^{S^1}}
 \negthickspace
\left ( t + \frac{[\omega|_X] }{\phi(F) - \phi(X)}
\right )^{\frac{1}{2}\dim(F) + 1}
\negthickspace \negthickspace \negthickspace
\negthickspace \negthickspace \negthickspace
 \negthickspace \negthickspace
\negthickspace \negthickspace \negthickspace
\negthickspace \negthickspace \negthickspace
,
\end{equation*}
where
the product is over all fixed components except $X$.
\item  More generally, there exists  $\lambda \in H_{S^1}^*(X;\R)$
such that
\begin{equation} \label{eulereq}
\lambda \,e^{S^1}(N_X) = \prod_{F \subset \left( M \smallsetminus
X\right)^{S^1}}
 \negthickspace
\left ( t + \frac{[\omega|_X] }{\phi(F) - \phi(X)}
\right )^{\frac{1}{2}\dim(F) + 1}
\negthickspace \negthickspace \negthickspace
\negthickspace \negthickspace \negthickspace
 \negthickspace \negthickspace
\negthickspace \negthickspace \negthickspace
\negthickspace \negthickspace \negthickspace
.
\end{equation}
\end{enumerate}
\end{lemma}

\begin{proof}
By Lemma~\ref{uvt}, there
exists $\ut \in H^2_{S^1}(M;\R)$ so that
$$\ut|_F = [\omega|_F] + t \left( \phi(X) - \phi(F) \right)$$
for each fixed component $F$.
Hence
$$\big(\ut|_F  + t \left(\phi(F) - \phi(X)\right) \big)
^{\frac{1}{2}\dim(F) +1}=0$$ for all $F$,
and so the class
$$\prod_{F \subset \left( M \smallsetminus X\right)^{S^1}}
\big( \ut + t \left(\phi(F) - \phi(X) \right) \big)
^{\frac{1}{2}\dim(F) + 1 } $$
vanishes when restricted to any fixed component other than $X$.
Therefore, Proposition~\ref{multiple-Euler} (applied to $-\phi$) implies
that
\begin{equation*}
\prod_{F \subset \left(M \smallsetminus
X\right)^{S^1}} \big( [\omega|_X] + t (\phi(F) - \phi(X) )
\big)^{\frac{1}{2}\dim(F) + 1 } = \lambda' \, e^{S^1}(N_X)
\end{equation*}
for some $\lambda' \in H_{S^1}^*(X;\R)$.  Dividing both sides of
this equation by a suitable constant, we have (2).

Finally, if $\sum_{F \subset M^{S^1}} \left (\dim(F) + 2 \right)
= \dim(M) + 2$, then
$$\sum_{F \subset \left (M \smallsetminus X \right)^{S^1}} \left( \dim(F) + 2 \right)
= \dim(M) - \dim(X).$$ Therefore, $\lambda \in \R$, and by comparing
the coefficients of $t^{\frac{1}{2}\dim(M) - \frac{1}{2}\dim(X)}$ on
both sides of \eqref{eulereq}, we see that $\lambda \Lambda_X = 1$.
\end{proof}

\begin{remark}
More generally,
if $\sum_{F \subset M^{S^1}} \left (\dim(F) + 2 \right) = \dim(M) + 2$
and  $F$ is any fixed component, then
$$ e^{S^1}(N^-_F) = \Lambda^-_F \prod_{\phi(F') < \phi(F)}
\left ( t + \frac{[\omega|_F] }{\phi(F') - \phi(F)}
\right )^{\frac{1}{2}\dim(F') + 1} \in H_{S^1}^*(F;\R),$$ where
$e^{S^1}(N^-_F)$ is the equivariant Euler class of the
negative normal bundle of
$F$,  $\Lambda^-_F$ is the product of the weights (with multiplicity) in
the negative normal bundle of $F$, and
the product is over all fixed components $F'$ such
that $\phi(F') < \phi(F)$.
The proof for this more general case is nearly identical, except that
it uses the fact that
$$ \sum_{\phi(F') < \phi(F)} \left( \dim(F') + 2 \right) =
2\lambda_F,$$
which follows from Lemma~\ref{index'} and Remark~\ref{extend}.
\end{remark}

\begin{lemma}\label{primitive}
Let the circle act
on a compact connected symplectic manifold $(M, \omega)$
with moment map $\phi \colon M \to \R$.
Let $X$ be the minimal fixed component; assume that
$2 <  2 \lambda_F$ for all other fixed components $F$.
If $[\omega]$ is a primitive integral class, so is $[\omega|_X]$.
\end{lemma}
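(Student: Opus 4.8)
The plan is to relate the divisibility of $[\omega|_X]$ in $H^2(X;\Z)$ to that of $[\omega]$ in $H^2(M;\Z)$ through the degree-two equivariant cohomology of $M$. Since $[\omega|_X]$ is the restriction of an integral class under $H^2(M;\Z)\to H^2(X;\Z)$, it is automatically integral, so the only issue is primitivity. I will argue by contradiction: assuming $[\omega|_X] = k\gamma$ for some integral $\gamma\in H^2(X;\Z)$ and some integer $k\geq 2$, I will produce an integral class $g\in H^2(M;\Z)$ with $[\omega] = k\,g$, contradicting the primitivity of $[\omega]$.

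The crucial step — the one I expect to carry the weight of the proof — is to show that restriction induces an \emph{isomorphism} $H^2_{S^1}(M;\Z)\simeq H^2_{S^1}(X;\Z) = H^2(X;\Z)\oplus \Z\, t$. Here I would use the Morse theory of $\phi$ exactly as recalled in \S\ref{background}. Since $X$ is the minimal fixed component, for small $\epsilon>0$ the set $\phi^{-1}(-\infty,\phi(X)+\epsilon)$ is an equivariant disk bundle over $X$, so its equivariant cohomology is $H^*_{S^1}(X;\Z)$. Now build $M$ up by crossing the remaining critical levels: each fixed component $F\neq X$ that we pass satisfies $2\leq 2\lambda_F - 2$ by hypothesis, so the relative groups $H^2_{S^1}(M^+,M^-;\Z)$ and $H^3_{S^1}(M^+,M^-;\Z)$ both vanish, and hence $H^2_{S^1}(M^+;\Z) = H^2_{S^1}(M^-;\Z)$ at that stage (over any coefficient ring, cf.\ the parenthetical remark in \S\ref{background}). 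Composing these isomorphisms gives the displayed one; if several fixed components sit at the same critical level one simply handles them one at a time. The main care needed is bookkeeping of coefficients: I would run the whole argument over $\R$, using the integral lattices $H^*(-;\Z)\to H^*(-;\R)$, so as to sidestep possible torsion in $H^*(X;\Z)$ — the Morse-theoretic vanishing above is what makes both injectivity and surjectivity work in this degree.

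With the isomorphism in hand the rest is short. By Lemma~\ref{uvt} applied to the fixed component $X$, there is an integral class $\ut\in H^2_{S^1}(M;\R)$ with $\ut|_X = [\omega|_X]$ and with $\ut$ restricting to $[\omega]$ under the natural map $H^2_{S^1}(M;\R)\to H^2(M;\R)$ (this map kills $\pi^*(t)$, so the summand $\lambda t$ appearing in the proof of Lemma~\ref{uvt} does not interfere). Assuming $[\omega|_X] = k\gamma$ with $k\geq 2$ as above, use surjectivity to lift $\gamma\in H^2(X;\Z)\subset H^2_{S^1}(X;\Z)$ to an integral class $\widetilde\gamma\in H^2_{S^1}(M;\R)$ with $\widetilde\gamma|_X = \gamma$. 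Then $(\ut - k\,\widetilde\gamma)|_X = [\omega|_X] - k\gamma = 0$, so by injectivity $\ut = k\,\widetilde\gamma$ in $H^2_{S^1}(M;\R)$. Restricting to $H^2(M;\R)$ then gives $[\omega] = k\,g$, where $g$ is the image of $\widetilde\gamma$, an integral class. Since $k\geq 2$ this contradicts the primitivity of $[\omega]$, and therefore $[\omega|_X]$ is primitive.
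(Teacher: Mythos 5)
Your argument is correct, and its engine is the same as the paper's: the hypothesis $2<2\lambda_F$ for all $F\neq X$ means that, starting from the disk bundle over the minimum $X$ and crossing the remaining critical levels, the relative groups vanish in degrees $2$ and $3$, so degree-two cohomology is unchanged. The difference is where you apply this. The paper applies it to \emph{ordinary} cohomology and simply records that the restriction $H^2(M;\Z)\to H^2(X;\Z)$ is an isomorphism; since this map sends $[\omega]$ to $[\omega|_X]$ and is compatible with the integral lattices, primitivity is transported in one line, with no need for Lemma~\ref{uvt}. You instead prove the equivariant isomorphism $H^2_{S^1}(M)\simeq H^2_{S^1}(X)=H^2(X)\oplus\Z t$, bring in the integral class $\ut$ from Lemma~\ref{uvt}, and run a lift-and-contradict argument ($[\omega|_X]=k\gamma$, lift $\gamma$, conclude $[\omega]=kg$). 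This is valid — your worry about torsion is in fact moot, since in degrees $\leq 2\lambda_F-1$ the relative groups vanish outright over $\Z$, which is exactly what you need to lift $\gamma$ integrally — but the equivariant detour and the use of $\ut$ buy nothing here; the non-equivariant degree-two isomorphism already contains the whole statement. (One micro-point your write-up leaves implicit: in the final contradiction you should note $g\neq[\omega]$, which follows since $[\omega]\neq 0$ and $k\geq 2$.)
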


\begin{proof}
Since $\phi$ is a Morse-Bott function and  $2 < 2 \lambda_F$ for
all other fixed components $F$,
the natural restriction map from $H^2(M;\Z)$ to $H^2(X;\Z)$
is an isomorphism.
\end{proof}

\begin{lemma}\label{ring}
Let the circle act on a compact connected symplectic manifold $(M,
\omega)$ with moment map $\phi \colon M \to \R$. Let $X$ be the
minimal fixed component; assume that $\dim(X) < 2 \lambda_F$ for all
other fixed components $F$. If
$\lambda [\omega|_X]^j$ is an  integral class for some
$j \in \left\{0,\dots,\frac{1}{2}\dim(X)\right\}$ and $\lambda \in \R$,
then
$ \lambda \left(\phi(X) - \phi(F)\right)^j \in \Z$
for each fixed component $F \subset M^{S^1}$.
\end{lemma}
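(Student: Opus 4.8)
The plan is to promote the hypothesized integrality from $H^{2j}(X;\R)$ up into the equivariant cohomology of $M$, and then restrict back to each fixed component to read off the numerical claim.

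First I would apply Lemma~\ref{uvt} with the minimal fixed component $X$ playing the role of $F$. This produces a class $\ut\in H^2_{S^1}(M;\R)$ with
$$\ut|_{F'}=[\omega|_{F'}]+t\bigl(\phi(X)-\phi(F')\bigr)$$
for every fixed component $F'$; in particular $\ut|_X=[\omega|_X]$, so $\ut^{j}\in H^{2j}_{S^1}(M;\R)$ restricts to $[\omega|_X]^{j}$ on $X$.

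Next I would check that restriction induces an isomorphism from $H^{2j}_{S^1}(M;\Z)$ onto $H^{2j}_{S^1}(X;\Z)$, and likewise with $\R$ coefficients. Since $X$ is a symplectic submanifold, $\dim(X)$ is even, so the hypothesis $\dim(X)<2\lambda_F$ together with $2j\le\dim(X)$ gives the strict inequality $2j+1<2\lambda_F$ for every fixed component $F\neq X$. Now build $M$ up through the sublevel sets $\phi^{-1}\bigl((-\infty,c)\bigr)$ as $c$ increases, as in \S\ref{background}: just above $\phi(X)$ the sublevel set is an equivariant disk bundle over the minimum $X$, hence equivariantly homotopy equivalent to $X$, so its degree-$2j$ equivariant cohomology is $H^{2j}_{S^1}(X;\Z)$ via restriction; and crossing any further fixed component $F$ alters $H^{k}_{S^1}$ only through the relative groups $H^{k-2\lambda_F}_{S^1}(F;\Z)$, which vanish for $k\in\{2j,2j+1\}$ because $2j+1<2\lambda_F$, so the long exact sequence of the pair leaves the degree-$2j$ equivariant cohomology unchanged at every stage (see the discussion following \eqref{short}). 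Composing these isomorphisms yields the claim over $\Z$; tensoring with $\R$, which is flat over $\Z$, yields the same over $\R$, and this real isomorphism carries the lattice of integral classes of $H^{2j}_{S^1}(M;\R)$ onto that of $H^{2j}_{S^1}(X;\R)$.

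Finally, $\lambda[\omega|_X]^{j}=\lambda\,\ut^{j}\big|_X$ is integral in $H^{2j}(X;\R)\subseteq H^{2j}_{S^1}(X;\R)$ by hypothesis, so pulling back through the isomorphism above shows $\lambda\,\ut^{j}$ is integral in $H^{2j}_{S^1}(M;\R)$; hence $\lambda\,\ut^{j}\big|_F$ is integral in $H^{2j}_{S^1}(F;\R)$ for every fixed component $F$. Expanding,
$$\lambda\,\ut^{j}\big|_F=\lambda\bigl([\omega|_F]+t(\phi(X)-\phi(F))\bigr)^{j}=\sum_{b=0}^{j}\binom{j}{b}\,\lambda\,[\omega|_F]^{\,j-b}\bigl(\phi(X)-\phi(F)\bigr)^{b}t^{b}$$
inside $H^{2j}_{S^1}(F;\Z)=\bigoplus_{b=0}^{j}H^{2(j-b)}(F;\Z)\,t^{b}$, where a class is integral exactly when each of its $t^{b}$-coefficients is. The coefficient of $t^{j}$ is $\lambda\bigl(\phi(X)-\phi(F)\bigr)^{j}\in H^0(F;\R)=\R$, so it must lie in $H^0(F;\Z)=\Z$; that is, $\lambda\bigl(\phi(X)-\phi(F)\bigr)^{j}\in\Z$, as desired. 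The main obstacle is the cohomological isomorphism above; it is entirely formal given the machinery of \S\ref{background}, the only care needed being to exploit the strict inequality $2j+1<2\lambda_F$, so that the relevant relative groups vanish over $\Z$ with no hypothesis on torsion in $H^*(F;\Z)$.
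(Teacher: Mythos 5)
Your argument is correct and is essentially the paper's proof: both use Lemma~\ref{uvt} to produce $\ut$, both observe that the hypothesis $\dim(X)<2\lambda_F$ makes the restriction $H^{2j}_{S^1}(M;\Z)\to H^{2j}_{S^1}(X;\Z)$ an isomorphism (so $\lambda\,\ut^{j}$ is integral), and both then restrict to the other fixed components to extract $\lambda(\phi(X)-\phi(F))^{j}\in\Z$ from the $t^{j}$-coefficient (the paper restricts to a point $y\in F$, which is the same computation). You merely spell out the Morse-theoretic vanishing of the relative groups that the paper asserts in one line.
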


\begin{proof}
By Lemma~\ref{uvt}, there exists $\ut \in  H^2_{S^1}(M;\R)$ such
that $\ut|_F = [\omega|_F] + t(\phi(X) - \phi(F))$  for
each fixed component $F$.
Since $\phi$ is a Morse-Bott function and
$\dim(X) < 2\lambda_F$ for all fixed
components $F$ other than $X$,  the natural
restriction
map from  $H_{S^1}^{2i}(M;\Z)$ to $H^{2i}_{S^1}(X;\Z)$ is an isomorphism for all
$i \in \left\{0,\dots,\frac{1}{2}\dim(X)\right\}$.
Therefore, if $\lambda\,  \ut^j|_X = \lambda [\omega|_X]^j$ is
an integral class, then so is $\lambda \, \ut^j$.
Therefore, for any $y$ in a fixed component $F$,
$\lambda \ut^j|_y = \lambda  (\phi(X) - \phi(F))^j \, t^j$
is integral.
\end{proof}

\section{The case that there are only two fixed components}

In this section, we turn to considering the implications
of our other main restriction -- the
assumption that there are only two fixed components, $X$ and $Y$.
The key idea is  to exploit the
fact that each (nonempty regular) reduced space
is a bundle over $X$ {\em and} a bundle over  $Y$;
more specifically, it is the projectivization of the normal bundle
to $X$ and of the normal bundle to $Y$.

\begin{proposition}\label{TW}
Let the circle act  on a connected compact symplectic manifold
$(M, \omega)$  with moment map $\phi \colon M \to \R$. Let $X$ be
the maximal fixed component and fix $i \in \N$. If the action is
semifree, or if $H^*(X;\Z)$ is torsion-free, or if $i  \leq \dim(M) - \dim(X)$,
let $R = \Z$; otherwise, let $R = \R$. Given a regular value $c \in \R$
so that $M^{S^1} \cap \phi^{-1}(c, +\infty) = X$, there is an isomorphism
\begin{gather*}
\kappa_{X,c} \colon H_{S^1}^i(X; R)/  e^{S^1}(N_X)
\stackrel{\simeq}{\longrightarrow}
H_{S^1}^i(\phi^{-1}(c);R) \quad \mbox{such that}\\
\kappa_{X,c}(\at|_X) = \at|_{\phi^{-1}(c)}
 \quad \forall \  \at \in H^i_{S^1}(M;R).
\end{gather*}
Here, $e^{S^1}(N_X)$ is the equivariant Euler class
of the normal bundle to $X$.
\end{proposition}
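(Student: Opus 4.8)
The plan is to identify $\phi^{-1}(c)$, up to $S^1$-equivariant homotopy, with the unit sphere bundle $S(N_X)$ of the normal bundle to $X$, and then to read off its equivariant cohomology from the equivariant Gysin sequence of that bundle.

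First I would set up the geometry. Since $X$ is the maximal fixed component and $M^{S^1}\cap\phi^{-1}(c,+\infty)=X$, the moment map has no critical value in $(c,\phi(X))$. Fix $\epsilon$ with $0<\epsilon<\phi(X)-c$. A Morse--Bott normal form for $\phi$ near $X$ --- together with the fact that $\phi(X)=\max\phi$, so that $\phi^{-1}(\phi(X)-\epsilon)$ is contained in an $S^1$-invariant tubular neighborhood of $X$ once $\epsilon$ is small --- identifies $\phi^{-1}(\phi(X)-\epsilon)$ $S^1$-equivariantly with $S(N_X)$, in such a way that the inclusion $S(N_X)=\phi^{-1}(\phi(X)-\epsilon)\hookrightarrow M$ is $S^1$-homotopic, via the radial homotopy inside the normal disk bundle, to the composite of the bundle projection $\pi\colon S(N_X)\to X$ with $X\hookrightarrow M$. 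Finally, the gradient flow of $\phi$ for an invariant metric deformation retracts $\phi^{-1}[c,\phi(X)-\epsilon]$ $S^1$-equivariantly onto $\phi^{-1}(c)$ and onto $\phi^{-1}(\phi(X)-\epsilon)=S(N_X)$, so there is an $S^1$-equivariant homotopy equivalence $\phi^{-1}(c)\simeq S(N_X)$ under which $\at|_{\phi^{-1}(c)}$ corresponds to $\pi^*(\at|_X)$ for every $\at\in H^*_{S^1}(M;R)$.

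Next I would apply the equivariant Gysin sequence of $\pi\colon S(N_X)\to X$. Since $N_X$ is a complex vector bundle (the sum of its weight subbundles), the associated bundle $N_X\times_{S^1}S^\infty\to X_{S^1}$ is an oriented real vector bundle of rank $k:=\dim(M)-\dim(X)=2\lambda_X$ with Euler class $e^{S^1}(N_X)\in H^k_{S^1}(X;R)$; hence over any ring $R$ there is an exact sequence containing
\begin{equation*}
H^i_{S^1}(X;R)\xrightarrow{\,\pi^*\,}H^i_{S^1}(S(N_X);R)\xrightarrow{\,\partial\,}H^{i-k+1}_{S^1}(X;R)\xrightarrow{\,\cup\, e^{S^1}(N_X)\,}H^{i+1}_{S^1}(X;R),
\end{equation*}
where $\pi^*$ has kernel $e^{S^1}(N_X)\cup H^{i-k}_{S^1}(X;R)$. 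Because $X$ is the maximal fixed component, $N_X$ equals its negative normal bundle, so $e^{S^1}(N_X)=e^{S^1}(N_X^-)$, and the consequences of the Atiyah--Bott observation recalled in \S\ref{background} apply. The key point is that, under each hypothesis of the proposition, multiplication by $e^{S^1}(N_X^-)$ is injective from $H^{i+1-k}_{S^1}(X;R)$ to $H^{i+1}_{S^1}(X;R)$: automatically if $R=\R$; if $R=\Z$ when the action is semifree; and if $R=\Z$ when $H^{i+1-k}(X;\Z)$ is torsion free, which covers both the hypothesis that $H^*(X;\Z)$ is torsion free and the hypothesis $i\le\dim(M)-\dim(X)=k$ --- since then $i+1-k\le 1$, and $H^0(X;\Z)=\Z$ and $H^1(X;\Z)$ are torsion free.

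Granting this injectivity, $\partial=0$, so $\pi^*$ is surjective in degree $i$; hence $\pi^*$ descends to an isomorphism $H^i_{S^1}(X;R)/e^{S^1}(N_X)\xrightarrow{\ \simeq\ }H^i_{S^1}(S(N_X);R)$. Composing with the isomorphism $H^i_{S^1}(S(N_X);R)\cong H^i_{S^1}(\phi^{-1}(c);R)$ from the first step yields $\kappa_{X,c}$, and the identity $\kappa_{X,c}(\at|_X)=\at|_{\phi^{-1}(c)}$ is exactly the compatibility noted in the first step. I expect the only genuine obstacle to be the bookkeeping in the previous paragraph: checking that the degree $i+1-k$ appearing in the relevant segment of the Gysin sequence, together with the coefficient ring prescribed by the hypotheses, always lies in the range where the injectivity statement of \S\ref{background} is available --- in particular, that $i\le\dim(M)-\dim(X)$ is precisely what forces $i+1-k\le 1$. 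The remaining ingredients --- the normal-form identification $\phi^{-1}(\phi(X)-\epsilon)\cong S(N_X)$, the gradient-flow retractions, and the equivariant Gysin sequence --- are standard.
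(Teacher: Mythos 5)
Your argument is correct and is essentially the paper's own (alternative) proof: the paper cites Tolman--Weitsman for the semifree/torsion-free/$\R$ cases and then gives exactly your argument, using the long exact sequence of the pair $(N_X, N_X\smallsetminus X)$ together with the Thom isomorphism --- which is the equivariant Gysin sequence you write --- plus the identifications $N_X\sim X$, $N_X\smallsetminus X\sim\phi^{-1}(c)$ and the non-zero-divisor property of $e^{S^1}(N_X^-)$ recalled in \S\ref{background}, with the case $i\le\dim(M)-\dim(X)$ handled just as you do, via torsion-freeness of $H^{\le 1}(X;\Z)$.
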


\begin{proof}
Assume first that the action is semifree, or $H^*(X;\Z)$ is
torsion-free, or  $R = \R$.  Then this claim is a special case of
the theorem on the cohomology of reduced spaces proved in \cite{TW};
see Theorem 3 and Propositions 6.4 and 6.7.

Alternatively, as
we showed in \S\ref{background},
if any of these criteria holds {\em or} if $i \leq 2 \lambda_X$ and $R=\Z$,
the long exact sequence in equivariant cohomology for the
pair $(N_X, N_X \smallsetminus X)$ breaks into a short exact
sequence:
$$0 \to H^i_{S^1}(N_X, N_X \smallsetminus X;R) \to
H_{S^1}^i(N_X ;R) \to H^i_{S^1}(N_X \smallsetminus X;R) \to
0.$$
Since $N_X \sim X$ and $N_X\smallsetminus X\sim
\phi^{-1}(c)$, by
the Thom isomorphism theorem we can
rewrite this short exact sequence as follows:
$$
0 \to H^{i - \dim(M) + \dim(X)}_{S^1}(X;R) \to H_{S^1}^i(X;R) \to
H^i_{S^1}(\phi^{-1}(c);R) \to
0, $$
where the second arrow is multiplication by $e^{S^1}(N_X)$.
\end{proof}

If there are exactly two fixed sets, this has the following
corollary:

\begin{corollary}\label{TW'}
Let the circle act on a  compact symplectic manifold $(M,\omega)$
with moment map $\phi \colon M \to \R$.  Assume that $M$ has exactly
two fixed components, $X$ and $Y$. Fix $i \in \N$. If the action is
semifree, or if $i \leq \min \{\dim(M)- \dim(X),\dim(M) -
\dim(Y)\}$, let $R = \Z$; otherwise, let $R = \R$. There is an
isomorphism
\begin{gather*}
f \colon H_{S^1}^i(X; R)/  e^{S^1}(N_X)  \stackrel{\simeq}{\longrightarrow}
H_{S^1}^i(Y; R)/  e^{S^1}(N_Y)
\quad \mbox{such that}\\
f (\at|_X) = \at|_Y \quad \forall \ \at \in H^i_{S^1}(M;R).
\end{gather*}
Moreover,
\begin{gather*}
f\left([\omega|_X] \right) = [\omega|_Y] + t \big(\phi(X) - \phi(Y)\big) \quad \mbox{and} \\
s\, f([\omega|_X]) + (1-s) [\omega|_Y] \neq 0 \quad \forall  \ s \in
(0,1)\quad \mbox{when} \,\, \dim(M) > 2.
\end{gather*}

\end{corollary}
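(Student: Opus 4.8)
The plan is to obtain the isomorphism $f$ by composing the two isomorphisms produced by Proposition~\ref{TW}, one for $X$ and one for $Y$. Without loss of generality assume $\phi(Y) < \phi(X)$, so that $X$ is the maximal fixed component and $Y$ is the minimal one. Pick any regular value $c$ with $\phi(Y) < c < \phi(X)$; then $M^{S^1} \cap \phi^{-1}(c,+\infty) = X$ and, applying Proposition~\ref{TW} to $-\phi$, also $M^{S^1} \cap \phi^{-1}(-\infty,c) = Y$. Proposition~\ref{TW} (with the stated choice of coefficient ring $R$, which is compatible: the hypothesis $i \le \min\{\dim(M)-\dim(X),\dim(M)-\dim(Y)\}$ guarantees $i \le 2\lambda_X$ when viewed from the $X$ side and $i \le 2\lambda_Y$ from the $Y$ side, so $R=\Z$ is legitimate in both invocations) then gives isomorphisms
\begin{gather*}
\kappa_{X,c} \colon H^i_{S^1}(X;R)/e^{S^1}(N_X) \xrightarrow{\ \simeq\ } H^i_{S^1}(\phi^{-1}(c);R), \\
\kappa_{Y,c} \colon H^i_{S^1}(Y;R)/e^{S^1}(N_Y) \xrightarrow{\ \simeq\ } H^i_{S^1}(\phi^{-1}(c);R),
\end{gather*}
both intertwining restriction from $H^i_{S^1}(M;R)$. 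I would then set $f = \kappa_{Y,c}^{-1} \circ \kappa_{X,c}$. The identity $f(\at|_X) = \at|_Y$ for $\at \in H^i_{S^1}(M;R)$ is immediate from the corresponding identities for $\kappa_{X,c}$ and $\kappa_{Y,c}$, since both send $\at|_{(\cdot)}$ to $\at|_{\phi^{-1}(c)}$.

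For the statement about $[\omega|_X]$, I would apply the displayed identity to the class $\ut \in H^2_{S^1}(M;\R)$ from Lemma~\ref{uvt} built using the fixed component $X$: that lemma gives $\ut|_{F'} = [\omega|_{F'}] + t(\phi(X) - \phi(F'))$ for every fixed component $F'$, so $\ut|_X = [\omega|_X]$ and $\ut|_Y = [\omega|_Y] + t(\phi(X) - \phi(Y))$. Hence $f([\omega|_X]) = f(\ut|_X) = \ut|_Y = [\omega|_Y] + t(\phi(X) - \phi(Y))$, as claimed. (One must check $i=2$ is covered; this is fine since $2 \le \dim(M) - \dim(X)$ and $2 \le \dim(M) - \dim(Y)$ whenever $X$ and $Y$ are proper, so $R$ may be taken to be $\Z$, and the $\R$-class $\ut$ is handled by tensoring with $\R$.)

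The last inequality is the only genuinely substantive point. For $s \in (0,1)$, the class $s\,f([\omega|_X]) + (1-s)[\omega|_Y] = [\omega|_Y] + s\,t(\phi(X)-\phi(Y)) \in H^2_{S^1}(Y;\R)/e^{S^1}(N_Y)$ is the image under $\kappa_{Y,c}$ — precomposed with $\kappa_{X,c}^{-1}$ on the convex combination — of a class that restricts to $\phi^{-1}(c')$ for the regular value $c' = s\phi(X) + (1-s)\phi(Y) \in (\phi(Y),\phi(X))$; more precisely, by Lemma~\ref{uvt}, $\kappa_{c'}\big(\ut - t(\phi(X)-c')\big) = \omega_{c'}$, the symplectic form on the reduced space $M_{c'}$, which is nowhere zero, so in particular nonzero in $H^2(M_{c'};\R)$. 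Since $c'$ is a regular value lying above $Y$ and below $X$, the isomorphism $\kappa_{Y,c'} \colon H^2_{S^1}(Y;\R)/e^{S^1}(N_Y) \to H^2(M_{c'};\R) = H^2_{S^1}(\phi^{-1}(c');\R)$ of Proposition~\ref{TW} carries exactly $[\omega|_Y] + s\,t(\phi(X)-\phi(Y))$ to $[\omega_{c'}] \neq 0$; being an isomorphism, it has trivial kernel, so the source class is nonzero. The main obstacle is keeping the bookkeeping of the two Thom-isomorphism/Euler-class quotients straight and verifying that the class $\ut - t(\phi(X)-c')$ restricts on $\phi^{-1}(c')$ to $\omega_{c'}$ with the normalization above; once that is pinned down via Lemma~\ref{uvt}, nonvanishing of the symplectic form on the (nonempty, since $c'$ is a regular value between the two fixed components) reduced space finishes the argument.
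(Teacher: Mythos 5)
Your proposal is correct and follows essentially the same route as the paper: you define $f=\kappa_{Y,c}^{-1}\circ\kappa_{X,c}$ from Proposition~\ref{TW}, compute $f([\omega|_X])$ via the class $\ut$ of Lemma~\ref{uvt}, and obtain the nonvanishing by working at the $s$-dependent regular value $c'=s\phi(X)+(1-s)\phi(Y)$, where the convex combination is identified with the restriction of $\ut - t(\phi(X)-c')$ and hence with the reduced symplectic class $\omega_{c'}\neq 0$. This is exactly the paper's argument, so no changes are needed.
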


\begin{proof}
For simplicity, we may assume that $\phi(X) < \phi(Y)$.

By  Proposition~\ref{TW},  for any $c \in \big(\phi(X),\phi(Y) \big)$
$$ f =   \left(\kappa_{Y,c}\right)^{-1}  \circ  \kappa_{X,c} \colon
H_{S^1}^i(X; R)/  e^{S^1}(N_X) \to
H_{S^1}^i(Y; R)/  e^{S^1}(N_Y)$$
is an isomorphism
such that
$f (\at|_X) = \at|_Y$   for all  $\at \in H^i_{S^1}(M;R)$.

By Lemma~\ref{uvt},
there exists $\ut \in H_{S^1}^*(M;\R)$ such
that $\ut|_X = [\omega|_X]$ and  $\ut|_Y = [\omega|_Y] +
t \big(\phi(X) - \phi(Y) \big).$
Therefore,
$f\left([\omega|_X] \right) = [\omega|_Y] + t \big(\phi(X) - \phi(Y)\big).$

Finally, fix any $s \in (0,1)$ and let $$c = s \phi(X) + (1-s)\phi(Y) \ \in \ \big(\phi(X),\phi(Y)\big).$$
Since $c$ is a regular value,
Lemma~\ref{uvt} implies that
$\kappa_c\left(\ut - t ( \phi(X) - c) \right) = \omega_c$,
where $\kappa_c $ is the Kirwan map and $(M_c,\omega_c)$
is the symplectic reduction of $M$ at $c$.
Therefore, under the identification of  $H_{S^1}^*(\phi^{-1}(c);\R)$ and  $H^*(M_c;\R)$,
\begin{gather*}
\kappa_{Y,c} \left( s\, f\left([\omega|_X]\right) + (1-s) [\omega|_Y] \right)  \\
 =
\kappa_{X,c}
\Big(s \, [\omega|_X] + (1-s)\, \big( [\omega|_X] - t \left(\phi(X) - \phi(Y) \right) \big)\Big) \\
 =  \kappa_{X,c}\big([\omega|_X] -
 t (\phi(X) - c ) \big) \\
 =  \big(\ut -  t (\phi(X) - c ) \big)\big|_{\phi^{-1}(c)} \\
 =  \kappa_{c}\big(\ut -  t (\phi(X) - c ) \big) \\
 = \omega_c.
\end{gather*}
Since $\omega_c \neq 0$ when $\dim(M_c) > 0$, the final claim
follows immediately.
\end{proof}

It is particularly easy to analyze the case
where one of the two fixed components has codimension two.

\begin{remark}\label{two}
Consider an effective  Hamiltonian circle action on a compact symplectic
manifold $(M,\omega)$. Assume that $M^{S^1}$ has exactly two
components, $X$ and $Y$, and that $Y$ has codimension two.
Then the fixed set
data near $Y$ is determined by the data near $X$.
More precisely,  there is a natural isomorphism
$$H^*(Y;\Z) = H^*_{S^1}(X;\Z)/e^{S^1}(N_X);$$
under this identification, $$e(N_Y) = t \quad
\mbox{and} \quad c(Y) = c(X) c^{S^1}(N_X).$$
To see this note that, since  $\rank_\C(N_Y) = 1$, the action  must be semifree.
Moreover, since $e^{S^1}(N_Y) = -t + e(N_Y)$,
the inclusion $H^*(Y;\Z) \to H_{S^1}^*(Y;\Z)$
induces an isomorphism from $H^*(Y;\Z)$ to $H_{S^1}^*(Y;\Z)/ e^{S^1}(N_Y)$.
Finally,
$$c^{S^1}(M)\big|_X = c(X)c^{S^1}(N_X)
\quad \mbox{and} \quad
c^{S^1}(M)\big|_Y =  c(Y)\left(1+ e^{S^1}(N_Y)\right).$$
Therefore, the claims  follow immediately from Corollary~\ref{TW'}.

\end{remark}
Additionally, when the two fixed components have minimal dimension,
the Chern class of each component is determined by the Chern class
of its normal bundle and the weights of the isotropy action on the
other component.

\begin{lemma}\label{qchern}
Let the circle act on a compact symplectic manifold $(M, \omega)$
with moment map $\phi \colon M \to \R$. Assume that $M^{S^1}$ has
exactly two components, $X$ and $Y$, where $\dim(X)+\dim(Y) = \dim(M)-2$.
Under the natural isomorphism
$$H^*(X) \simeq H_{S^1}^*(X)/\left([\omega|_X]  + t \big(\phi(Y) - \phi(X) \big)\right)$$
the total Chern class of $X$ is $$c(X)
= \frac{ \prod_{\lambda} \left( 1 + \lambda t \right)} {
c^{S^1}(N_X) }, $$ where the product is over the weights
(counted with multiplicity)
$\lambda$ in $N_Y$.
Here, $N_X$ and $N_Y$ are the normal bundles to $X$ and $Y$, respectively.
\end{lemma}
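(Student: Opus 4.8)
\textbf{Proof plan for Lemma~\ref{qchern}.}
The plan is to compute $c^{S^1}(M)|_X$ in two different ways and then pass to ordinary cohomology via the natural isomorphism identifying $H^*(X)$ with $H_{S^1}^*(X)/e^{S^1}(N_X)$. On the one hand, since $TM|_X = TX \oplus N_X$ and $S^1$ acts trivially on $TX$, we have $c^{S^1}(M)|_X = c(X)\, c^{S^1}(N_X)$. On the other hand, I want to produce an independent expression for $c^{S^1}(M)|_X$ by exploiting the second fixed component $Y$ and the fact -- established in Section~\ref{background} using Lemma~\ref{index'}-type reasoning, or more directly here -- that the restriction $H^*_{S^1}(M;R)$ behaves well. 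Concretely, $c^{S^1}(M)|_Y = c(Y)\, c^{S^1}(N_Y)$, and the $S^1$-equivariant Chern class of $N_Y$ is, by Lemma~\ref{Chern} applied weight-by-weight (after splitting $N_Y$ into its weight subbundles), a unit times $\prod_\lambda(1+\lambda t + (\text{Chern roots}))$; the point is that modulo the relation coming from $e^{S^1}(N_Y)$ the ``$c(Y)$ part'' can be absorbed, so that $c^{S^1}(M)|_Y$ is congruent to $\prod_\lambda(1+\lambda t)$ up to a unit in the relevant quotient ring.

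First I would invoke Corollary~\ref{TW'}: for the top relevant degree it gives an isomorphism
$$f\colon H_{S^1}^*(X;R)/e^{S^1}(N_X) \xrightarrow{\ \simeq\ } H_{S^1}^*(Y;R)/e^{S^1}(N_Y)$$
with $f(\at|_X) = \at|_Y$ for every $\at \in H^*_{S^1}(M;R)$, and it identifies $f([\omega|_X]) = [\omega|_Y] + t(\phi(X)-\phi(Y))$. Taking $\at = c^{S^1}(M)$ shows that the two expressions $c(X)c^{S^1}(N_X)$ and $c(Y)c^{S^1}(N_Y)$ correspond under $f$ in these quotient rings. Then I would note that $Y$ has codimension $\dim(M)-\dim(Y) = \dim(X)+2$, so $N_Y$ has complex rank $\tfrac12\dim(X)+1 = \tfrac12\dim(M)-\tfrac12\dim(Y)$; since $\dim(Y) \le \dim(X)$ forces $\dim(M)-\dim(Y)$ to be large relative to $\dim(Y)$, the class $c(Y)$ lies in a range of degrees where, after passing to $H^*(Y;R) \simeq H^*_{S^1}(Y;R)/e^{S^1}(N_Y)$, it is invertible. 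The cleanest route: in $H^*(Y;R)$ we have $c(Y) \cdot c(N_Y) = c^{S^1}(M)|_Y \bmod (t)$; but I actually want the equivariant statement, so instead I would argue that $e^{S^1}(N_Y)$ together with $t$ generates enough of the ideal that $c^{S^1}(N_Y) \equiv \prod_\lambda(1+\lambda t)$ up to higher-degree corrections killed in the quotient, whence $c(X) = \frac{\prod_\lambda(1+\lambda t)}{c^{S^1}(N_X)}$ as claimed.

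The main obstacle I anticipate is making precise \emph{why} the non-equivariant Chern data of $Y$ drops out -- i.e., pinning down the exact quotient ring and degree bookkeeping so that $c(Y)$ becomes a unit (equivalently, so that its positive-degree part is annihilated). The honest way to handle this is: set $d = \tfrac12\dim(X)+1$, the complex rank of $N_Y$, and observe $e^{S^1}(N_Y)$ is the lowest piece (in the $t$-adic sense) of a degree-$2d$ class whose leading term is $\Lambda_Y t^{d}$ with $\Lambda_Y = \prod_\lambda \lambda \neq 0$; since $\dim(Y) = \dim(X) - (\text{something})$, every class in $H^{>0}(Y;R)$ multiplied into $c^{S^1}(N_Y)$ lands above the top degree once we also use that $H^*(Y;R)$ is itself a quotient by $e^{S^1}(N_Y)$ — so in the quotient ring $H^*_{S^1}(Y;R)/e^{S^1}(N_Y)$, working degree by degree, $c^{S^1}(M)|_Y$ is forced to equal $c(Y)\cdot\prod_\lambda(1+\lambda t)$ times a unit, and transporting back through $f^{-1}$ and dividing by $c^{S^1}(N_X)$ (a unit in $H^*_{S^1}(X;R)$ since its degree-$0$ term is $1$) gives the formula. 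I would phrase the final identification in the ring $H^*_{S^1}(X;R)/e^{S^1}(N_X) \cong H^*(X;R)$, using that under this isomorphism $\xt := [\omega|_X]+t(\phi(Y)-\phi(X))$ maps to $0$, exactly as in the statement.
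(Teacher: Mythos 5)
Your overall strategy (compare the two factorizations $c^{S^1}(M)|_X = c(X)\,c^{S^1}(N_X)$ and $c^{S^1}(M)|_Y = c(Y)\,c^{S^1}(N_Y)$ through the isomorphism of Corollary~\ref{TW'}) is the same as the paper's, but the step you yourself flag as the main obstacle is exactly where the argument breaks, and your sketch for it does not work. It is not true that, in the quotient ring $H^*_{S^1}(Y;\R)/e^{S^1}(N_Y)$, the class $c(Y)\,c^{S^1}(N_Y)$ is congruent to $\prod_\lambda(1+\lambda t)$ up to a unit: that quotient is the cohomology of the reduced space (a weighted projectivization of $N_Y$ over $Y$), into which the positive-degree part of $H^*(Y;\R)$ injects, so neither $c(Y)$ nor the ordinary Chern classes of the weight subbundles of $N_Y$ ``drop out'' there. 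Saying $c(Y)$ ``becomes a unit'' misses the point -- every total Chern class is a unit in a ring where positive-degree elements are nilpotent; what you need is that its positive-degree part is killed, and it is not killed by $e^{S^1}(N_Y)$ alone. The auxiliary degree count you propose also relies on an assumption $\dim(Y)\le\dim(X)$ that is not in the statement. (A smaller slip of the same kind: $c^{S^1}(N_X)$ is not a unit in $H^*_{S^1}(X;R)=H^*(X)[t]$, since $t$ is not nilpotent; it only becomes invertible after passing to the quotient by $[\omega|_X]+t(\phi(Y)-\phi(X))$, which is where the stated formula lives.)

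The paper's proof resolves this by composing $f$ with restriction to a single point $y\in Y$: on $H^*_{S^1}(\{y\};\R)/t^{\frac12\dim(M)-\frac12\dim(Y)}$ all positive-degree classes of $Y$ vanish identically, so $c^{S^1}(M)|_y=\prod_\lambda(1+\lambda t)$ on the nose, with no need to control $c(Y)$. The real content is then to check that this further restriction loses nothing, and this is precisely where the hypothesis $\dim(X)+\dim(Y)=\dim(M)-2$ enters: by Lemma~\ref{euler}, $e^{S^1}(N_X)=\Lambda_X\bigl(t+\frac{u}{m}\bigr)^{\frac12\dim(Y)+1}$ is a multiple of $u+mt$, so the quotient on the $X$ side collapses to $H^*_{S^1}(X;\R)/(u+mt)$, and by Proposition~\ref{implies}, $H^*(X;\R)=\R[u]/u^{\frac12\dim(X)+1}$, so the induced map $g$ (sending $u\mapsto -mt$) is an isomorphism onto $\R[t]/t^{\frac12\dim(M)-\frac12\dim(Y)}$; only then can the identity be transported back and solved for $c(X)$. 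Your proposal never verifies any such injectivity and never uses the codimension-two hypothesis in this way, so even if you repaired the $Y$-side congruence (say, by also quotienting by $f(u+mt)=v$ and invoking Proposition~\ref{implies} for $Y$), you would still be unable to conclude the formula in $H^*_{S^1}(X;\R)/(u+mt)\simeq H^*(X;\R)$. As written, the proposal has a genuine gap at its central step.
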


\begin{proof}
By Corollary~\ref{TW'}, there is an isomorphism
\begin{gather*}
f \colon H^*_{S^1}(X;\R)/ e^{S^1}(N_X)  \to
H^*_{S^1}(Y;\R)/ e^{S^1}(N_Y)  \quad \mbox{such that} \\
f(t) = t, \quad
f \Big (c^{S^1}(M)\big|_X \Big) = c^{S^1}(M)\big|_Y,
\quad \mbox{and} \quad
f(u) = v - mt,
\end{gather*}
where $u = [\omega|_X]$,
$v = [\omega|_Y]$, and
$m = \phi(Y) - \phi(X)$.

Fix a point $y \in Y$.
Since $f(u + mt)|_y = 0$,
the composition of $f$
and the restriction map $$\iota_y^* \colon H_{S^1}^*(Y;\R)/e^{S^1}(N_Y)
\to H_{S^1}^*(\{y\};\R)/t^{\frac{1}{2}\dim(M) - \frac{1}{2}\dim(Y)}$$
induces a  map
\begin{gather*}
g \colon H_{S^1}^*(X;\R)/\big( u + mt,  e^{S^1}(N_X)\big) \to
H_{S^1}^*(\{y\};\R)/t^{\frac{1}{2}\dim(M) - \frac{1}{2}\dim(Y)} \quad \mbox{so that} \\
g(u) = -mt \quad \mbox{and} \quad g \big(c^{S^1}(M)|_X \big) =
c^{S^1}(M)\big|_y =  \prod_\lambda (1 + \lambda t),
\end{gather*}
where again the product is over all the weights $\lambda$ in $N_Y$.
 Moreover, since $\dim(X) + \dim(Y) = \dim(M) - 2$,
Lemma~\ref{euler} implies that
$$ e^{S^1}(N_X)=\Lambda_X\left(t+\frac{u}{m}\right)^{\frac{1}{2}\dim(Y)+1};$$
in particular,  $e^{S^1}(N_X)$ is a multiple of $u + mt$.
Therefore,
$$H_{S^1}^*(X;\R)/ \big(u + mt, e^{S^1}(N_X) \big)
= H_{S^1}^*(X;\R)/\left( u + mt \right) .$$
Finally, Proposition~\ref{implies}
implies that
$$
H^*(X;\R) = \R[u]/u^{\frac{1}{2}\dim(X) + 1}
.$$
Therefore, $g$ is an isomorphism.
Since $c^{S^1}(M)|_X = c(X)c^{S^1}(N_X)$,
the claim follows immediately.
\end{proof}

\section{Proof of  Theorem~\ref{B} for semifree actions}

In this section, we prove Theorem~\ref{B} in the case when the
circle action is
semifree.

\begin{proposition}\label{case1}
Let the circle  act on a compact symplectic manifold
$(M, \omega)$  with moment map $\phi \colon M \to \R$. Assume that
$M^{S^1}$ has exactly two components, $X$ and $Y$,
and that $\dim(X)+\dim(Y)=\dim(M) - 2$.
Also assume that the action is semifree.
 Then
\begin{align*}
H^*(X;\Z) = \Z[u]/u^{\frac{1}{2} \dim (X) + 1}
& \quad \mbox{and} \quad
H^*(Y;\Z) = \Z[v]/v^{\frac{1}{2} \dim (Y) + 1} ;\\
c(X) = (1 + u)^{\frac{1}{2} \dim (X) + 1} \
& \quad \mbox{and} \quad
c(Y)=(1+v)^{ \frac{1}{2} \dim (Y) +1}; \\
c({N_X})=(1+u)^{\frac{1}{2}\dim (Y)+1}
& \quad \mbox{and} \quad
c({N_Y}) =(1+v)^{\frac{1}{2} \dim(X) +1};
\end{align*}
where $N_X$ and $N_Y$ denote the normal bundles to $X$ and $Y$,
respectively.
Moreover, if $\phi(Y) > \phi(X)$ and $[\omega]$ is a primitive integral class, then
$\phi(Y) - \phi(X) = 1$.
\end{proposition}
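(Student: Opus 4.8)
We may assume $\phi(X) < \phi(Y)$, so $X$ is the minimal and $Y$ the maximal fixed component. Since the action is semifree, all the short exact sequences \eqref{short} and \eqref{short2} are exact over $\Z$, so $H^*(M;\Z)$ is torsion-free and $\phi$ is (equivariantly) perfect; in particular $H^j(M;\Z) = \bigoplus_F H^{j-2\lambda_F}(F;\Z)$. Because there are only two fixed components and $\dim(X)+\dim(Y) = \dim(M)-2$, we have $2\lambda_Y = \dim(X)+2$, so the cohomology of $M$ coincides additively with that of $\CP^n$; in particular $b_2(M)=1$ and $[\omega]$ can be rescaled to a primitive integral class.

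First I would pin down the cohomology rings of $X$ and $Y$. By Lemma~\ref{index'} (or directly), $\dim(X) < 2\lambda_Y$, so Proposition~\ref{implies} applies: since the only other fixed component is $Y$, the set of integers $\{\phi(X)-\phi(Y)\}$ is a single integer, and after rescaling $[\omega]$ it is $\pm 1$ — hence relatively prime in the required sense. Therefore $H^*(X;\Z) = \Z[u]/u^{i+1}$ with $u=[\omega|_X]$ primitive, where $\dim(X)=2i$. Running the same argument with $-\phi$ (so that $Y$ is the minimal component) gives $H^*(Y;\Z)=\Z[v]/v^{j+1}$ with $v=[\omega|_Y]$ primitive, $\dim(Y)=2j$. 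Note $i+j = n-1$.

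Next I would compute the normal bundles using Corollary~\ref{TW'} and Lemma~\ref{euler}. Since the action is semifree, $S^1$ acts on $N_X$ with all weights equal to $+1$ and on $N_Y$ with all weights equal to $-1$; thus $N_X$ has rank $\dim(M)-\dim(X) = 2j+2$ wait — rank over $\C$ is $\tfrac12(\dim M - \dim X) = j+1$, and similarly $N_Y$ has complex rank $i+1$. By Lemma~\ref{euler}(1), with $m = \phi(Y)-\phi(X) = 1$ after normalization,
\[
e^{S^1}(N_X) = \Lambda_X\, (t + u)^{j+1} = (t+u)^{j+1},
\]
since all weights of $N_X$ are $+1$ so $\Lambda_X = 1$. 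Now Lemma~\ref{Chern} (applied with $\lambda = 1$) tells us that, writing $c(N_X) = 1 + c_1 + \dots + c_{j+1}$ with $c_k \in H^{2k}(X;\Z) = \Z\cdot u^k$, the equivariant Euler class is $e^{S^1}(N_X) = (t)^{j+1} + c_1 t^{j} + \dots + c_{j+1}$. Comparing with $(t+u)^{j+1}$ forces $c_k = \binom{j+1}{k} u^k$, i.e. $c(N_X) = (1+u)^{j+1}$. The identical argument with $-\phi$ gives $c(N_Y) = (1+v)^{i+1}$; here all weights of $N_Y$ are $-1$ so $\Lambda_Y = (-1)^{i+1}$, but one checks the signs work out because $-\phi(X)+\phi(Y) = 1$ and the Euler class formula absorbs the sign.

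Finally the Chern classes of $X$ and $Y$ themselves follow from Lemma~\ref{qchern}: the product there is over the weights in $N_Y$, all equal to $-1$, of which there are $i+1$, so
\[
c(X) = \frac{\prod_\lambda (1+\lambda t)}{c^{S^1}(N_X)}\Big/(u+t) = \frac{(1 - t)^{i+1}}{c^{S^1}(N_X)},
\]
evaluated in $H^*_{S^1}(X;\R)/(u+t)$, i.e. setting $t = -u$; this gives $c(X) = (1+u)^{i+1}/(1+u)^{\,?}$ — so I must be careful: $c^{S^1}(N_X)$ restricted this way is $(1+t+u)^{j+1} \mapsto 1$ at $t=-u$, which is wrong, so instead I should feed the full equivariant statement. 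The cleaner route is: from $c^{S^1}(M)|_X = c(X)\, c^{S^1}(N_X)$ and $c^{S^1}(M)|_Y = c(Y)\, c^{S^1}(N_Y)$, together with $f(c^{S^1}(M)|_X) = c^{S^1}(M)|_Y$ under Corollary~\ref{TW'}, deduce $c^{S^1}(M)|_X = (1+u)^{i+1}(1+u+t)^{j+1}$ by matching both restrictions against the known $c^{S^1}(N_X) = (1+u+t)^{j+1}$, $c^{S^1}(N_Y)=(1+v-t)^{i+1}$, and then read off $c(X) = c^{S^1}(M)|_X / c^{S^1}(N_X)$ modulo $(u+t)$, giving $c(X) = (1+u)^{i+1}$. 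Symmetrically $c(Y) = (1+v)^{j+1}$.

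\textbf{Main obstacle.} The genuine work is the normalization and sign bookkeeping: establishing that $\phi$ may be normalized so that $[\omega]$ is primitive integral \emph{and} $\phi(Y)-\phi(X) = 1$ simultaneously (this uses Lemma~\ref{ring} and Lemma~\ref{primitive} to transfer primitivity of $[\omega]$ to $[\omega|_X]$ and to control the integrality of the coefficient), and then tracking the weight $-1$ on $N_Y$ through Lemma~\ref{euler} and Lemma~\ref{Chern} without sign errors. The cohomology-ring computation of $X$ and $Y$ is essentially immediate from Proposition~\ref{implies} once the normalization is in place; the identification of $c(X), c(Y)$ from the equivariant Chern class of $M$ is routine polynomial matching in $\Z[u,t]$ and $\Z[v,t]$.
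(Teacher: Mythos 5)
There is a genuine gap, and it sits exactly where you flag your ``main obstacle'': you never prove that, with $[\omega]$ normalized to be a primitive integral class, the difference $m=\phi(Y)-\phi(X)$ equals $1$. Rescaling $\omega$ lets you arrange \emph{either} $[\omega]$ primitive integral \emph{or} $m=1$, but not both simultaneously --- that simultaneity is a theorem, not a normalization, and it is the actual content of the paper's proof. You need $m=1$ twice: to satisfy the ``relatively prime'' hypothesis of Proposition~\ref{implies}(3) (with only one other fixed component, that hypothesis says precisely $m=1$), and to turn $e^{S^1}(N_X)=(t+u/m)^{j+1}$, $c^{S^1}(N_X)=(1+t+u/m)^{j+1}$, $c(X)=(1+u/m)^{i+1}$ into the stated integral formulas. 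Citing Lemmas~\ref{ring} and \ref{primitive} is not enough by itself; the missing mechanism in the paper is an Euler characteristic computation. Working over $\R$ (so no integrality is needed yet), Proposition~\ref{implies}(2), Lemma~\ref{euler}, Lemma~\ref{Chern} and Lemma~\ref{qchern} give $c(X)=(1+u/m)^{i+1}$ with $u=[\omega|_X]$; integrating the top term against $\chi(X)=i+1$ shows $u^i/m^i$ is a \emph{primitive integral} class. Since $u$ is primitive (Lemma~\ref{primitive}), Poincar\'e duality then forces $u^{i-1}/m^i$ to be integral, and Lemma~\ref{ring} converts that into $m^i\mid m^{i-1}$, i.e.\ $m=1$. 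Only after this does Proposition~\ref{implies}(3) give $H^*(X;\Z)=\Z[u]/u^{i+1}$, with the same argument run for $-\phi$ on $Y$. Your outline has the logical order reversed: you use $m=\pm1$ to get the integral ring and the Chern classes, whereas the paper must first get the real-coefficient Chern class formulas to prove $m=1$.

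Two smaller remarks. First, your mid-proof worry about Lemma~\ref{qchern} (``the denominator restricts to $1$ at $t=-u$, which is wrong'') is a misreading: that is exactly the intended computation --- the numerator $(1-t)^{i+1}$ becomes $(1+u/m)^{i+1}$ and the denominator $c^{S^1}(N_X)$ becomes $1$, yielding $c(X)=(1+u/m)^{i+1}$, which is the paper's equation; your alternative route via matching restrictions of $c^{S^1}(M)$ through Corollary~\ref{TW'} could be made to work but is less direct and needs its own justification. Second, apart from the $m=1$ issue, the remaining steps of your plan (real cohomology of $X$ and $Y$ from Proposition~\ref{implies}, $e^{S^1}(N_X)$ from Lemma~\ref{euler}, $c^{S^1}(N_X)$ from Lemma~\ref{Chern}, and $c(X)$ from Lemma~\ref{qchern}, with the sign bookkeeping for the weight $-1$ on $N_Y$) do follow the paper's approach and are fine once $m=1$ is established.
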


\begin{proof}
Clearly, the proposition holds if  $\dim(X)=\dim(Y)=0$.
Without loss of generality, we assume that $\phi(X) < \phi(Y)$ and that $\dim(X) > 0$.
By assumption, there exist natural numbers $i > 0$ and $j$ such that
\begin{gather}\label{semidim}
\dim(X) = 2i,  \ \
\dim(Y) = 2j,
\ \  \mbox{and} \ \
\dim(M) = 2i + 2j + 2;  \quad \mbox{hence}\\
\label{semirank}
\rank_\C (N_X) = j+1 \quad \mbox{and} \quad \rank_\C(N_Y) = i + 1.
\end{gather}
By Proposition~\ref{implies} and Lemma~\ref{euler},
\begin{gather} \label{semiring}
H^*(X;\R) = \R[u]/u^{i+1}, \quad \mbox{where} \ u = [\omega|_X], \quad \mbox{and} \\
\label{semieuler} e^{S^1}(N_X)=\left( t+ \frac{u}{m}\right)^{j+1}, \quad \mbox{where} \ m = \phi(Y) - \phi(X).
\end{gather}
Since the action is semifree,  \eqref{semieuler} and Lemma~\ref{Chern}
imply that the total equivariant Chern class of $N_X$ is
\begin{equation} \label{chernsemin}
c^{S^1}(N_X)=\left(1+t+\frac{u}{m}\right)^{j+1}.
\end{equation}
Similarly,  \eqref{semirank}, \eqref{chernsemin}, and Lemma~\ref{qchern} imply
that the total Chern class of $X$ is
\begin{equation}\label{chernsemix}
c(X) =\left(1+\frac{u}{m}\right)^{i+1}= 1 + (i+1) \frac{u}{m} +
\dots + (i +1)\frac{u^i}{m^i}.
\end{equation}
By \eqref{semiring} and  \eqref{chernsemix}, the Euler characteristic of $X$ is
\begin{equation}\label{semie}
i+1 =  \sum_k (-1)^k \dim \big(H^k(X;\R)\big)  =  \int_X c_i(X) = (i +1)
\int_X \frac{u^i}{m^i}.
\end{equation}
Therefore, $\frac{u^i}{m^i}  \in H^{2i}(X;\R)$ is a primitive
integral class. By multiplying $[\omega]$ by a constant, we may
assume that $[\omega]$ is also  a primitive integral
class. Hence, $u = [\omega|_X] \in H^2(X;\R)$ is a primitive integral class by
Lemma~\ref{primitive}. By Poincar\'e duality, these two facts  imply
that $\frac{u^{i-1}}{m^i}  \in H^{2i-2}(X, \R)$ is an integral
class. By Lemma~\ref{ring}, this implies that $m^i$ divides
$m^{i-1}$, that is,
\begin{equation}\label{msemi}
 m=1.
\end{equation}
By Proposition~\ref{implies}, this implies that
\begin{equation}\label{semi}
H^*(X;\Z) = \Z[u]/u^{i + 1}.
\end{equation}
Since nearly identical arguments can be applied to $Y$,
the claim now follows immediately from
\eqref{semidim},
\eqref{chernsemin},
\eqref{chernsemix},
\eqref{msemi},   and
\eqref{semi}.

\end{proof}

\section{Isotropy submanifolds}

Let the circle act effectively on a compact symplectic manifold $(M, \omega)$.
If the action is not semifree, then the assumption that there are only
two fixed components induces strong restrictions on $M$ itself and on its isotropy
submanifolds, especially if the fixed components have relatively simple cohomology.
Here, an {\bf isotropy submanifold} is a symplectic submanifold $M^{\Z_k} \subsetneq M$
which is not fixed by the $S^1$ action, but is fixed by the $\Z_k$
action for some $k > 1$.

We begin with some results which do not depend on the
cohomology of the fixed components.

\begin{lemma}\label{i<j}
Let the circle act effectively on a connected compact symplectic manifold
$(M,\omega)$ with moment map $\phi \colon M \to \R$.  Assume that
$M$ has exactly two fixed components, $X$ and $Y$.
\begin{itemize}
\item
If the action is not semifree, then
$\dim(X) = \dim(Y).$
\item Given an isotropy submanifold $Q \subsetneq M$,
there exists a cohomology  class $\at \in H_{S^1}^{\dim(Q) - \dim(X)}(M;\Z)$ so that
$$\at|_X =  e^{S^1}(N_X^Q)
\quad  \mbox{and} \quad
\at|_Y =  \pm e^{S^1}(N_Y^Q) ,$$
where $N_X^Q$ and $N_Y^Q$ are the normal bundles of $X$ and $Y$ in $Q$.
\end{itemize}
\end{lemma}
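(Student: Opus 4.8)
The plan is to treat the two bullet points separately, since the first is a purely topological statement about the critical set of the moment map and the second is a localization argument.

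For the first bullet, I would argue as follows. Suppose $\dim(X) \neq \dim(Y)$; without loss of generality $\phi(X) < \phi(Y)$. Since the action is effective but not semifree, there is some $k > 1$ and an isotropy submanifold $M^{\Z_k} \subsetneq M$. The key observation is that $M^{\Z_k}$ is itself a compact symplectic submanifold carrying a Hamiltonian $S^1/\Z_k$-action, and its fixed set $(M^{\Z_k})^{S^1}$ is a union of components of $M^{S^1} = X \sqcup Y$. Thus $M^{\Z_k}$ is fixed by $S^1$ unless it contains at least one of $X, Y$; in fact, because a connected isotropy submanifold flows between fixed components under the moment map, $M^{\Z_k}$ must meet both the sublevel sets below $\phi(X)+\epsilon$ and above $\phi(Y)-\epsilon$, hence must contain both $X$ and $Y$ (each is the unique local min/max of $\phi$). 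Now I would compare dimensions at $X$ and at $Y$: the normal bundle $N_X$ splits as $N_X^{M^{\Z_k}} \oplus N_{M^{\Z_k}}|_X$, and likewise at $Y$. On $N_{M^{\Z_k}}$ the circle acts with weights divisible by $k$, while on $N_X^{M^{\Z_k}}$ (resp. $N_Y^{M^{\Z_k}}$) it does not. Since $X$ is the minimum all weights of $N_X$ are positive and all weights of $N_Y$ are negative (as $Y$ is the maximum), so $\lambda_X = 0$ and $\lambda_Y = \tfrac{1}{2}\dim(M) - \tfrac{1}{2}\dim(Y)$; together with the Morse-theoretic index count and the fact that $M^{\Z_k}$ is a closed symplectic submanifold with its own minimal/maximal structure, one forces $\dim(X) = \dim(Y)$. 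I expect the cleanest route is via the index identity $\lambda_Y = \tfrac12\dim M - \tfrac12 \dim Y$ applied both to $M$ and to the submanifold $M^{\Z_k}$, which pins down $\dim X = \dim Y$ once the action on $M$ itself is non-semifree. This is the step I anticipate requiring the most care, since one must rule out the possibility that the only isotropy submanifolds avoid $X$ or $Y$.

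For the second bullet, let $Q = M^{\Z_k} \subsetneq M$ be an isotropy submanifold. As just discussed, $Q$ is a closed connected symplectic submanifold of $M$ containing both $X$ and $Y$ as components of $Q^{S^1}$, and it is invariant, so it has an equivariant Thom class, or rather an equivariant Euler class $e^{S^1}(N^Q) \in H^{\dim(M) - \dim(Q)}_{S^1}(Q)$ of its normal bundle in $M$. I would build the desired class $\at$ by applying Proposition~\ref{multiple-Euler}, or more directly the structure of the long exact sequences from \S\ref{background}. Concretely, consider the equivariant Euler class of the normal bundle $N^M_Q$ of $Q$ in $M$; pushed forward (via the equivariant Gysin/Thom map associated to $Q \hookrightarrow M$) it yields a class in $H^{\dim(M)-\dim(Q)}_{S^1}(M;\Z)$ whose restriction to any fixed component $F \subset M^{S^1}$ is $e^{S^1}(N^M_Q)|_F$ if $F \subset Q$ and $0$ otherwise. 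Since both $X$ and $Y$ lie in $Q$, and since $T M|_F = T Q|_F \oplus N^M_Q|_F$ gives $N^M_F = N^Q_F \oplus N^M_Q|_F$, the multiplicativity of the equivariant Euler class shows that this pushed-forward class restricts to $X$ as $e^{S^1}(N^M_Q)|_X$. The remaining bookkeeping is to identify $\dim(M) - \dim(Q) = \dim(Q) - \dim(X) $ when $\dim X = \dim Y$ and $Q$ is "halfway", which follows from the dimension/index constraints, and to track the sign of $e^{S^1}(N_Y^Q)$, which can differ from $+1$ because the orientation conventions at the maximum $Y$ reverse the sign of the weights. Thus $\at$ is simply the image under the equivariant Gysin map $H^*_{S^1}(Q;\Z) \to H^{*+\dim M - \dim Q}_{S^1}(M;\Z)$ of $e^{S^1}(N^M_Q) \in H^{\dim M - \dim Q}_{S^1}(Q;\Z)$, and its restrictions to $X$ and $Y$ are computed by the excess-intersection / self-intersection formula, giving $e^{S^1}(N_X^Q)$ and $\pm e^{S^1}(N_Y^Q)$ respectively.

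The main obstacle, as noted, is the first bullet: establishing that the mere failure of semifreeness forces the existence of an isotropy submanifold containing \emph{both} fixed components, and then converting the resulting Morse-index data into the equality $\dim X = \dim Y$. Once that is in hand, the second bullet is a fairly mechanical application of equivariant Gysin maps and multiplicativity of equivariant Euler classes, with only the sign ambiguity at $Y$ requiring attention.
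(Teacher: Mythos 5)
Both halves of your proposal have genuine gaps. For the first bullet, the Morse-index bookkeeping you propose carries no information about $\dim X$ versus $\dim Y$: at the minimum $X$ the index is $0$ and at the maximum $Y$ it is $\dim M-\dim Y$, and the same statements applied to $\phi|_Q$ on an isotropy submanifold $Q$ give $0$ and $\dim Q-\dim Y$, no matter what $\dim X$ and $\dim Y$ are. (Indeed, in the semifree examples $\CP^n$ the two fixed components have unequal dimensions with exactly the same index pattern, so no index identity can ``pin down'' the equality; the only role non-semifreeness plays in your sketch is to produce $Q$, and after that the argument has no mechanism.) The paper's proof is cohomological: applying Corollary~\ref{TW'} to $Q$ shows that a class $\mu\in H^{\dim Q-\dim X}_{S^1}(M;\R)$ restricts to a multiple of $e^{S^1}(N_X^Q)$ on $X$ exactly when it restricts to a multiple of $e^{S^1}(N_Y^Q)$ on $Y$; equivariant perfection of $\phi$ then gives classes $\at,\bt$ with $\at|_X=e^{S^1}(N_X^Q)$ and $\bt|_Y=e^{S^1}(N_Y^Q)$, hence $\at|_Y=a\,e^{S^1}(N_Y^Q)$ and $\bt|_X=b\,e^{S^1}(N_X^Q)$ with $ab=1$, where $a\in H^{\dim Y-\dim X}_{S^1}(Y;\R)$ and $b\in H^{\dim X-\dim Y}_{S^1}(X;\R)$; invertibility forces $a,b$ to have degree zero, i.e.\ $\dim X=\dim Y$. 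That step is the actual content and is absent from your outline.

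For the second bullet, the Gysin construction produces the wrong class in the wrong degree. The pushforward associated to $\iota\colon Q\hookrightarrow M$ of $1$ lies in $H^{\dim M-\dim Q}_{S^1}(M)$ and restricts on a fixed component $F\subset Q$ to $e^{S^1}(N_Q^M)|_F$, the Euler class of the normal bundle of $Q$ in $M$; the lemma requires instead $e^{S^1}(N_X^Q)$, the Euler class of the normal bundle of $X$ \emph{inside} $Q$, which is the complementary summand in $N_X^M=N_X^Q\oplus N_Q^M|_X$ and lives in degree $\dim Q-\dim X$. Your proposed reconciliation ``$\dim M-\dim Q=\dim Q-\dim X$'' is false in general (in the eventual classification $\dim M^{\Z_2}=\dim M-2$ while $\dim M^{\Z_2}-\dim X$ is typically much larger), so the degrees do not even match. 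Finally, the coefficient $\pm1$ at $Y$ is not an orientation convention to be ``tracked'': it is proved by an integrality argument --- once $\dim X=\dim Y$, Corollary~\ref{TW'} applies with $\Z$ coefficients in this degree, and the existence of \emph{integral} classes restricting to the Euler classes (available because $\dim Q-\dim X<2\lambda_Y$) shows $a,b\in\Z$ with $ab=1$, hence $a=b=\pm1$. Your sketch addresses neither the integrality nor why the multiple cannot be some other unit-like class.
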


\begin{proof}
Assume that the action is not semifree, and fix any isotropy
submanifold $Q \subsetneq M$. Consider a cohomology class $\mu \in
H^{\dim(Q) - \dim(X)}_{S^1}(M;\R)$. By applying Corollary~\ref{TW'}
to $\mu|_Q \in H_{S^1}^{\dim(Q) - \dim(X)} (Q;\R)$,
we see that $\mu|_X$ is a multiple of $e^{S^1}(N_X^Q)
\in H_{S^1}^*(X;\R)$ exactly if $\mu|_Y$ is a multiple of
$e^{S^1}(N_Y^Q) \in H_{S^1}^*(Y;\R)$.

Assume first that $\dim(X) > \dim(Y)$.
Since \eqref{short} is exact for $R = \R$, there
exists  $\at \in H_{S^1}^{\dim(Q) - \dim(X)}(M;\R)$ such
that $\at|_X = e^{S^1}(N_X^Q)$.
By the first paragraph, $\at|_Y = a e^{S^1}(N_Y^Q)$
for some $a \in H^{\dim(Y) - \dim(X)}_{S^1}(Y;\R)$.
Since $\dim(Y) - \dim(X) < 0$, this implies that $\at|_Y = 0$.
Now we apply Corollary~\ref{TW'} to $\at$ on $M$;  since $\at|_Y = 0$,
 $\at|_X$ is a multiple
of $e^{S^1}(N_X)$, where $N_X$  is the normal bundle of $X$  in $M$.
Since $\deg\left(\at\right) <\deg\big(e^{S^1}(N_X) \big)$,  this implies that $\at|_X = 0$,
which gives a contradiction.
Therefore, $$\dim(X) = \dim(Y).$$

Since $\dim(Q) - \dim(X) = \dim(Q) - \dim(Y)$, Corollary~\ref{TW'}
(applied to $\mu|_Q$)
implies that for any cohomology class $\mu \in H^{\dim(Q) -
\dim(X)}_{S^1}(M;\Z)$, $\mu|_X$ is an {\em integer} multiple of
$e^{S^1}(N_X^Q) \in H_{S^1}^*(X;\Z)$ exactly if $\mu|_Y$ is an {\em
integer} multiple of $e^{S^1}(N_Y^Q) \in H_{S^1}^*(Y;\Z)$. Moreover,
since
$\dim(Q) - \dim(X) < \dim(M) - \dim(Y) = 2 \lambda_Y$,
\eqref{short} is exact if we take $R = \Z$, $j = \dim(Q) - \dim(X)$,  and $F = Y$.
Thus, there exists an {\em integral} class  $\at \in
H_{S^1}^{\dim(Q) - \dim(X)}(M;\Z)$ such that $\at|_X =
e^{S^1}(N_X^Q)$.
Similarly, there exists
$\bt \in H_{S^1}^{\dim(Q) - \dim(Y)} (M;\Z)$ such that $\bt|_Y =
e^{S^1}(N_Y^Q)$.
By the argument above, $\at|_Y = a e^{S^1}(N_Y^Q)$
for some $a \in \Z$
and $\bt|_X = b
e^{S^1}(N_X^Q)$ for some $b \in \Z$.
Then $(\bt - b \at)|_X = 0$.
Hence  the same argument as the last paragraph and the fact that
$$\deg\big(\bt - b \at \big) =  \dim(Q) - \dim(Y) < \dim(M) - \dim(Y) = \deg\big(e^{S^1}(N_Y)  \big),$$ where $N_Y$ is the normal bundle of $Y$ in $M$,
yield that $(\bt - b\at)|_Y = 0$. On the other hand, by a direct computation,   $(\bt -
b \at)|_Y = (1 - ab) e^{S^1}(N_Y^Q)$. This is only possible if
$ab=1$,  which implies that
$a = b = \pm 1$.
\end{proof}

\begin{corollary}\label{same}
Let the circle act effectively on a compact symplectic manifold
$(M,\omega)$ with moment map $\phi \colon M \to \R$.  Assume that
$M$ has exactly two fixed components, $X$ and $Y$,
and that the action is not semifree.
Then $$\Xi_X = - \Xi_Y,$$
where $\Xi_X$ and $\Xi_Y$ denote the multisets of
weights (counted with multiplicity) for the isotropy action on $N_X$
and $N_Y$, respectively.
\end{corollary}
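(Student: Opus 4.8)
The plan is to extract the statement $\Xi_X = -\Xi_Y$ directly from the second part of Lemma~\ref{i<j}, applied to a family of isotropy submanifolds. The key observation is that the multiset of weights on $N_X$ is, by definition, the disjoint union over all isotropy submanifolds $Q = M^{\Z_k}$ through $X$ (together with their ``multiplicities'') of the weights on the normal bundles $N_X^Q$ and the weights on the subbundle of $N_X$ on which $S^1$ acts with weights divisible by $k$; more precisely, each weight $k$ appearing in $\Xi_X$ corresponds to a component of $M^{\Z_k}$ containing $X$, and the rank of the weight-$k$ piece $V_k \subset N_X$ is exactly $\mathrm{rank}_\C(N_X^Q)$ for the appropriate component $Q$ of $M^{\Z_k}$ through $X$. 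So it suffices to show that for each $k \geq 2$ and each component $Q$ of $M^{\Z_k}$, the weight-$k$ pieces $V_k \subset N_X$ and $V_{-k}\subset N_Y$ (equivalently, the weight $-k$ piece) have the same rank, and symmetrically; this would give $\Xi_X = -\Xi_Y$ as multisets.

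First I would fix an isotropy submanifold $Q \subsetneq M$; since the action is not semifree and there are only two fixed components, by Lemma~\ref{i<j} we have $\dim(X) = \dim(Y)$, and $Q$ must contain both $X$ and $Y$ (it is a compact symplectic submanifold invariant under $S^1$, hence its own moment map has critical set $Q \cap M^{S^1}$, which is nonempty and a union of components of $M^{S^1}$; if it contained only one, $Q$ would be fixed, contradiction). Then Lemma~\ref{i<j} furnishes a class $\at \in H^{\dim(Q)-\dim(X)}_{S^1}(M;\Z)$ with $\at|_X = e^{S^1}(N_X^Q)$ and $\at|_Y = \pm e^{S^1}(N_Y^Q)$. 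The crucial point is a degree count: $\dim(Q) - \dim(X) = \mathrm{codim}_Q(X) \cdot$ (appropriate factor) equals the real rank of $N_X^Q$, which must match the real rank of $N_Y^Q$ since the two Euler classes live in the same cohomological degree. Hence $\mathrm{rank}(N_X^Q) = \mathrm{rank}(N_Y^Q)$.

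Next I would identify $N_X^Q$ with the weight-$k$ subbundle of $N_X$ (where $Q$ is a component of $M^{\Z_k}$): a point in the normal direction to $X$ inside $Q$ is fixed by $\Z_k$, so $S^1$ acts there with weight divisible by $k$, and by choosing $Q$ to be the full fixed set $M^{\Z_k}$ (or summing over its components through $X$) one recovers exactly the subbundle $V_k \subset N_X$ of weights that are nonzero multiples of $k$. Running this over all $k \geq 2$ shows that the weights of $\Xi_X$ that are multiples of $k$, counted with multiplicity, total the same rank as those in $\Xi_Y$; combined with the sign $\pm$ and the fact that the weights on $N_Y$ point ``the other way'' ($Y$ is the maximum if $X$ is the minimum, so the corresponding weights are negated), an induction on $|k|$ from largest to smallest pins down each multiplicity and yields $\Xi_X = -\Xi_Y$ exactly. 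The main obstacle I anticipate is the bookkeeping in this last step: matching up \emph{which} weight $k$ in $\Xi_X$ corresponds to \emph{which} weight in $\Xi_Y$, since the Euler-class equality of Lemma~\ref{i<j} only controls ranks of normal bundles to a \emph{fixed} isotropy submanifold, not the individual weights; one needs to run the argument simultaneously for all $M^{\Z_k}$ and peel off the largest $|k|$ first, using that $M^{\Z_k} \subseteq M^{\Z_{k'}}$ when $k' \mid k$, so that the rank of $V_k$ is an alternating sum of ranks of the $N_\bullet^{M^{\Z_{k'}}}$, which we have already matched.
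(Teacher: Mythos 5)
Your proposal is correct and follows essentially the same route as the paper: the paper's proof also observes that for each $k>1$ the isotropy submanifold $M^{\Z_k}$ is connected and contains both $X$ and $Y$, so that $\dim(X)=\dim(Y)$ (Lemma~\ref{i<j}) forces $k$ to divide the same number of weights in $\Xi_X$ and $\Xi_Y$, and the multisets are then recovered exactly as in your divisibility/inversion bookkeeping (using that weights at the minimum are positive and at the maximum negative). Your detour through the Euler-class part of Lemma~\ref{i<j} to compare ranks is harmless but unnecessary, since the rank equality already follows from $\dim(X)=\dim(Y)$ and the connectedness of the relevant component of $M^{\Z_k}$.
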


\begin{proof}
Consider any $k > 1$.
Since $M^{S^1}$ has only two components, if there exists any points with stabilizer
$\Z_k$, then the
isotropy submanifold  $M^{\Z_k}$ is connected and  contains $X$ and $Y$.
Moreover, since the action is not semifree, $\dim(X) = \dim(Y)$ by Lemma~\ref{i<j}.
Therefore,  $k$ divides exactly the same number of
weights in $\Xi_X$ and $\Xi_Y$.
\end{proof}

\begin{lemma}\label{I}
Let $A$ be a set of relatively prime natural numbers $a_1< \dots < a_N$.
Assume that for each $i$ and $k$ in
$\{1,\dots,N\}$, there exists $j \in \{1,\dots,N\}$ such that $a_i
+ a_j = 0 \mod a_k$. Then  $a_i = i$ for all $i$.
\end{lemma}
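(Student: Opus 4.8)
The plan is to analyze the arithmetic condition directly, working with the largest element and reducing to smaller instances. First I would observe that the hypothesis is exactly symmetric in the sense that it applies to every $a_k$, including $a_N$. So for every $i$, there is some $j$ with $a_i + a_j \equiv 0 \pmod{a_N}$. Since $0 < a_i + a_j < 2a_N$ (as all $a_\ell \le a_N$, and we need $a_i+a_j>0$), this forces $a_i + a_j = a_N$ whenever $a_i + a_j \ne 2a_N$, i.e. whenever not both equal $a_N$; taking $i = N$ itself, we need $a_N + a_j \equiv 0$, which since $a_N + a_j \le 2a_N$ forces $a_j = a_N$, consistent. The upshot is that for each $i < N$ there is a ``partner'' $j$ with $a_i + a_j = a_N$; in particular $a_N - a_i \in A$ for all $i$, so $A$ is closed under the involution $x \mapsto a_N - x$ on its elements that are strictly less than $a_N$, together with $a_N$ itself being sent to $0$ — which must be interpreted carefully since $0 \notin A$. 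Let me instead phrase it as: $\{a_1,\dots,a_{N-1}\} \cup \{a_N\}$ is symmetric under $x \mapsto a_N - x$ as a subset of $\{0,1,\dots,a_N\}$, meaning $a_N - a_i \in A$ for $1 \le i \le N-1$.

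Next I would use this symmetry to pin down the structure. The smallest element $a_1$ has partner $a_N - a_1$, which is the second-largest element $a_{N-1}$ (since the involution reverses order). More generally $a_{N-i} = a_N - a_i$ for $1 \le i \le N-1$. Now I would bring in the other instances of the hypothesis: applying it with $k$ equal to a general index and exploiting that the set must contain $1$. The key claim to establish is $a_1 = 1$: since $A$ is relatively prime, if $d = \gcd$ is... no — relatively prime means $\gcd(a_1,\dots,a_N) = 1$, not $a_1 = 1$. To get $a_1 = 1$, I would argue that the condition $a_i + a_j \equiv 0 \pmod{a_1}$ (take $k=1$) combined with the symmetry $a_N - a_i \in A$ forces the $a_i$ to be tightly packed. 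Concretely, I expect to show by induction on $N$: given the symmetry $a_{N-i} = a_N - a_i$, the set $A' = \{a_N - a_{N-1} < a_N - a_{N-2} < \dots\} $ — rather, I would consider $A'' = \{a_i - a_1 : a_i > a_1\}$ or the ``difference set from the bottom'' and check it again satisfies the hypotheses with a smaller largest element, then conclude $a_i - a_1 = $ (something), eventually forcing consecutive integers.

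The main obstacle, I expect, is organizing the descent cleanly: naively the subset $\{a_2 - a_1, a_3 - a_1, \dots, a_N - a_1\}$ need not be relatively prime nor need it literally satisfy the modular condition, so I would need to choose the right sub-configuration (likely $\{a_1, a_2, \dots, a_{N-1}\}$ after subtracting, using that $a_N - a_i \in A$ to reinterpret the condition) and verify all three hypotheses (relative primality, the $a_i + a_j \equiv 0 \pmod{a_k}$ property, and strict increase) survive the reduction. Once the induction is set up correctly the base case $N = 1$ is trivial ($a_1 = 1$ by relative primality) and $N=2$ forces $a_1 = 1, a_2 = 2$ quickly. I would also keep in mind the alternative, more computational route: show directly that $a_{i+1} - a_i = 1$ for all $i$ by assuming some gap $a_{i+1} - a_i \ge 2$, producing a residue mod $a_{i+1}$ that no sum $a_r + a_s$ can hit, contradicting the hypothesis with $k = i+1$; this may in fact be the shorter path and I would try it first.
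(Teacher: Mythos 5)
Your first step is exactly the paper's: taking $k=N$ forces $a_i + a_j = a_N$ for every $i<N$ (since $0 < a_i + a_j < 2a_N$ there), and monotonicity then gives the pairing $a_{N-i} = a_N - a_i$ for all $i \in \{1,\dots,N-1\}$. But from that point on the proposal never becomes a proof. The paper's inductive step is to apply the lemma to the set $A' = \{a_1,\dots,a_{N-1}\}$ itself --- no subtraction, no difference set --- and the entire content of that step is the verification you leave undone: (i) $A'$ is relatively prime, because any common divisor of $a_1,\dots,a_{N-1}$ also divides $a_N = a_1 + a_{N-1}$, hence divides all of $A$; and (ii) $A'$ still satisfies the modular hypothesis, because if for given $i,k \leq N-1$ the only partner supplied by the hypothesis is $j=N$, then the identity $a_N = a_k + a_{N-k}$ lets one replace $a_N$ by $a_{N-k} \in A'$ modulo $a_k$, yielding $a_i + a_{N-k} \equiv 0 \pmod{a_k}$ with $N-k \leq N-1$. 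This substitution trick is the missing idea. Your primary candidate reduction, the difference set $\{a_i - a_1\}$, is the wrong object (as you yourself suspect, it need not inherit either hypothesis), and the phrase ``$\{a_1,\dots,a_{N-1}\}$ after subtracting'' indicates you had not seen that no subtraction is needed; once $A' = \{1,\dots,N-1\}$ is known, $a_N = a_1 + a_{N-1} = N$ finishes the induction.

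Your fallback ``gap'' route also does not work as stated. The hypothesis with $k=i+1$ only demands that for each $\ell$ the residue $-a_\ell \bmod a_{i+1}$ be represented by some element of $A$, and elements larger than $a_{i+1}$ contribute residues that a purely local assumption $a_{i+1} - a_i \geq 2$ does not control; so you cannot exhibit an unhit residue without first invoking the global symmetry $a_{N-i} = a_N - a_i$ and some descent --- which is exactly what the induction on $N$ supplies. As it stands, the proposal contains the correct first half of the paper's argument but a genuine gap in the second.
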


\begin{proof}

 The claim is obvious if $N = 1$.  Assume that the
claim holds for $N-1$.

Consider any $i \in \{1,\dots,N-1\}$. By assumption, there  exists
$j \in \{1,\dots,N\}$ such that $a_i + a_j = 0 \mod a_N$. Since $a_i
< a_N$ and $a_j \leq a_N$, this implies that $a_i + a_j = a_N$.
Since $a_1 < \dots < a_{N-1}$, this immediately implies that $$a_i +
a_{N-i} = a_N \quad \forall \ i \in \{1,\dots,N-1\}.$$

Let $A' = \{a_1,\dots,a_{N-1}\}$. Since the elements of $A$  are relatively prime,
the  equation above immediately implies that the
elements of $A'$ are relatively prime.
Moreover, fix $i$ and $k$  in $\{1,\dots,N-1\}$. By assumption, there exists $j \in
\{1,\dots,N\}$ such that $a_i + a_j = 0 \mod a_k$.  Moreover, if $j
= N$, then since $a_k + a_{N-k}= a_N$ this implies that $a_i + a_k
+ a_{N-k} = 0 \mod a_k$, and hence $a_i + a_{N-k} = 0 \mod a_k$. By
the inductive hypothesis, this implies that $A' = \{1,\dots,N-1\}$.
The result follows immediately.
\end{proof}

\begin{lemma}\label{modulo}
Let the circle act on a compact symplectic manifold $(M, \omega)$.
Let $p$ and $q$ be fixed points which lie on the same component $N$
of $M^{\mathbb Z_k}$ for some $k>1$. Then the weights of the action
at $p$ and at $q$ are equal modulo $k$.
\end{lemma}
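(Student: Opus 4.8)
\textbf{Proof proposal for Lemma~\ref{modulo}.}
The plan is to reduce everything to the local structure of the $\Z_k$-action along the connected submanifold $N$, where $N$ is the component of $M^{\Z_k}$ containing both $p$ and $q$. First I would observe that $N$ is a compact $\Z_k$-invariant (indeed $S^1$-invariant, up to finite cover) symplectic submanifold of $M$, and that its normal bundle $\nu = N_N$ in $M$ carries a fibrewise linear $\Z_k$-action with no nonzero fixed vectors on any fibre (otherwise $N$ would not be a component of $M^{\Z_k}$). Since $N$ is connected and the circle (hence $\Z_k$) acts, the normal bundle splits $\Z_k$-equivariantly as a direct sum of subbundles $\nu = \bigoplus_\ell \nu_\ell$, where $\Z_k$ acts on $\nu_\ell$ by rotation by $\ell \in \{1,\dots,k-1\}$; this decomposition is locally constant in the base, hence constant on the connected $N$. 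Consequently the multiset of residues mod $k$ of the weights of the $S^1$-action on $\nu_x$ is the same for every $x \in N$ — in particular for $x = p$ and $x = q$.

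The remaining point is to pass from the normal weights along $N$ to the \emph{full} tangent weights at the fixed points $p$ and $q$. At $p$ we have $T_pM = T_pN \oplus \nu_p$ as $S^1$-representations; since $p \in N^{S^1}$ and $N$ carries a (nontrivial in general) circle action, $T_pN$ contributes some further weights. However, the $\Z_k$-action on $T_pN$ is trivial — that is precisely the statement that $N \subset M^{\Z_k}$ — so \emph{every} weight coming from $T_pN$ is divisible by $k$, i.e.\ is $0$ mod $k$. The same holds at $q$. Thus the tangent weights at $p$, reduced mod $k$, consist of the $\nu$-weights mod $k$ together with a collection of zeros, and identically for $q$; since the $\nu$-weights mod $k$ agree at $p$ and $q$ by the previous paragraph, the full weight multisets agree mod $k$.

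The one genuine subtlety — the step I expect to require the most care — is the claim that the $\Z_k$-isotypic decomposition of the normal bundle $\nu$ is constant along $N$, which uses connectedness of $N$ together with the fact that the rank of each isotypic piece is an upper semicontinuous, locally constant function on the base; one should note here that $\dim N$ is even and the decomposition is as a sum of complex line-type pieces (using an invariant compatible almost complex structure, as in \S\ref{background}), so that ``weight mod $k$'' is well defined. Everything else is routine: the splitting $T_pM = T_pN \oplus \nu_p$ as $S^1$-modules, and the observation that $\Z_k$ acting trivially on a weight-$w$ summand forces $k \mid w$. I would write the argument in roughly this order: (1) set up $N$, its normal bundle, and the invariant almost complex structure; (2) establish the constancy of the $\Z_k$-isotypic decomposition of $\nu$ along the connected $N$; (3) deduce equality mod $k$ of normal weights at $p$ and $q$; (4) split off $T_pN$ and $T_qN$, note their weights vanish mod $k$, and conclude.
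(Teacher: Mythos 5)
Your argument is correct: the paper gives no proof of Lemma~\ref{modulo} itself, deferring to Lemma 2.6 of \cite{T}, and your proof is essentially the standard argument given there --- the $\Z_k$-isotypic decomposition of the ($J$-complex, for an invariant compatible $J$) normal bundle of the connected component $N$ is constant along $N$, the $S^1$-weights on $T_pN$ and $T_qN$ are divisible by $k$ because $\Z_k$ fixes $N$ pointwise, and the two contributions together give equality of the full tangent weight multisets mod $k$. One cosmetic remark: the hedge ``$S^1$-invariant up to finite cover'' is unnecessary --- since $S^1$ is connected and $\Z_k \subset S^1$, the circle preserves $M^{\Z_k}$ and hence each of its components outright.
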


For a proof of this lemma, see Lemma 2.6 in \cite{T}.

\begin{proposition}\label{distinctweights}
Let the circle act effectively on a compact symplectic manifold
$(M,\omega)$ with moment map $\phi \colon M \to \R$.  Assume that
$M^{S^1}$ has exactly two components, $X$ and $Y$.
Then there exists $N \in \N$ so that
the set of distinct
weights for the isotropy action on $N_X$ is
$\{1,\dots,N\}$.
\end{proposition}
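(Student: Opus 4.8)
The plan is to show the set of distinct weights on $N_X$ is an initial segment $\{1,\dots,N\}$ by first establishing that it is \emph{relatively prime} and that it is \emph{symmetric under pairwise sums mod any member}, and then invoking Lemma~\ref{I}. Let $A = \{a_1 < \dots < a_N\}$ be the set of distinct weights of the isotropy representation on $N_X$; I must produce exactly the hypotheses of Lemma~\ref{I}.

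First I would argue that the $a_i$ are relatively prime. If a prime $p$ divided every weight in $\Xi_X$, then every point of $X$ would have stabilizer containing $\Z_p$, so $X \subset M^{\Z_p}$. Since $M^{S^1}$ has only two components and $M^{\Z_p}$ is a closed invariant symplectic submanifold containing a fixed component, $M^{\Z_p}$ is connected and contains both $X$ and $Y$; but then every weight on $N_X$ inside $M^{\Z_p}$... — more directly, if $p$ divides all the weights at $X$, then by Lemma~\ref{modulo} (applied along the connected $M^{\Z_p}$) $p$ divides all the weights at $Y$ as well, and then $p$ divides all weights at \emph{every} fixed point, contradicting effectiveness of the action. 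Hence $\gcd(a_1,\dots,a_N) = 1$.

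Next, the sum condition: for each $i,k$ I need $j$ with $a_i + a_j \equiv 0 \pmod{a_k}$. Here I would use the isotropy submanifold $Q = M^{\Z_{a_k}}$, which contains $X$ (since $a_k$ is a weight at $X$, so $\Z_{a_k}$ fixes part of $N_X$, hence $X \subsetneq Q$) and is connected and also contains $Y$. The normal bundle $N_X$ splits as $N_X^Q \oplus (N_X \cap TQ)$, where $N_X^Q$ carries exactly the weights of $\Xi_X$ \emph{not} divisible by $a_k$. Apply Lemma~\ref{i<j} to $Q$: there is a class $\at \in H_{S^1}^{\dim(Q)-\dim(X)}(M;\Z)$ with $\at|_X = e^{S^1}(N_X^Q)$ and $\at|_Y = \pm e^{S^1}(N_Y^Q)$. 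Now restrict $\at$ to a fixed point; using the description of equivariant Euler classes as products of $(\text{weight})\cdot t$ terms, $\at|_X$ is (up to a constant) $t^{\dim(Q)-\dim(X)}$ times the product of the weights of $\Xi_X$ not divisible by $a_k$, and similarly on $Y$. Combining this with Corollary~\ref{same} (which gives $\Xi_X = -\Xi_Y$ when the action is not semifree, and when it \emph{is} semifree there are no $a_k > 1$ at all so the statement is trivial with $N=1$), and with the mod-$a_k$ congruences forced by Lemma~\ref{modulo} along $Q$, I can extract the required combinatorial relation: the multiset of residues mod $a_k$ of the weights in $\Xi_X$ is closed under negation, so for each weight $a_i$ there is a weight $a_j$ with $a_i + a_j \equiv 0 \pmod{a_k}$.

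Finally, having verified both hypotheses, Lemma~\ref{I} gives $a_i = i$ for all $i$, so the set of distinct weights is $\{1,\dots,N\}$. I expect the main obstacle to be the middle step: extracting the clean statement ``$\Xi_X \bmod a_k$ is closed under negation'' in a way that legitimately gives \emph{every} pair $(a_i,a_k)$, since Lemma~\ref{i<j} only controls Euler classes of \emph{complementary} normal bundles and one must be careful that the constant $\pm 1$ and the $t$-powers match up so that comparing $\at|_X$ with $\at|_Y$ really yields an equality of multisets of weights mod $a_k$ rather than something weaker. Handling the weights divisible by $a_k$ (which contribute to $N_X \cap TQ$, not $N_X^Q$) versus those not divisible requires tracking the splitting carefully, and the semifree versus non-semifree dichotomy must be invoked at the right moment to make Corollary~\ref{same} available.
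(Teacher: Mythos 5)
Your proposal is correct and is essentially the paper's argument: relative primality of the weights from effectiveness, the congruence condition from Corollary~\ref{same} together with Lemma~\ref{modulo} applied along the connected isotropy submanifold containing both $X$ and $Y$ (with the semifree case dismissed as trivial, as you note), and then Lemma~\ref{I}. The middle step you flag as the main obstacle is in fact unnecessary: the paper never invokes Lemma~\ref{i<j} or compares equivariant Euler classes here, since Corollary~\ref{same} plus Lemma~\ref{modulo} already give that the weights of $\Xi_X$ and of $-\Xi_X$ agree modulo $a_k$, which (handling weights divisible by $a_k$ via $j=k$) is exactly the hypothesis of Lemma~\ref{I}.
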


\begin{proof}
Let $A = \{a_1,\dots,a_N\} \subset \N$ be the set of distinct
weights for the isotropy action on $N_X$.
By Corollary~\ref{same}
the set of distinct weights for the isotropy action on $N_Y$ is
$\{-a_1,\dots,-a_N\}$. Moreover,
by Lemma~\ref{modulo},
for each $i$ and $k$ in $\{1,\dots,N\}$,
there exists $j \in \{1,\dots,N\}$ such
that $a_i = -a_j \mod a_k$. Finally, since the action is effective,
$a_1,\dots,a_N$ are relatively prime. Therefore,  Lemma~\ref{I}
implies that  $A=\{1, 2, \cdots, N \}$ for some $N\in\N$.
\end{proof}

The remaining results depend on the cohomology of
the fixed components.

\begin{lemma}\label{r=1}
Let the circle act effectively on a compact symplectic manifold $(M, \omega)$
with moment map $\phi \colon M \to \R$.
Assume that
$M^{S^1}$ has exactly two components, $X$ and $Y$.
Assume that $b_2(X) = 1$,
and let  $Q \subsetneq M$ be an isotropy submanifold such that
$\dim(Q) - \dim(Y)  = 2.$
Then $$c_1(N_X^Q)= 0,$$
where $N_X^Q$ denotes the normal bundle to $X$ in $Q$.
\end{lemma}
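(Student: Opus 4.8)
The plan is to combine Lemma~\ref{i<j} with the hypothesis $b_2(X)=1$: the first produces an \emph{ambient} equivariant class whose restrictions to $X$ and $Y$ are, up to sign, the equivariant Euler classes of the normal bundles of $X$ and $Y$ inside $Q$, and the second lets us read off $c_1(N_X^Q)$ from that class.

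First I would record the structure of $Q$. Since $Q\subsetneq M$ is an isotropy submanifold, $Q=M^{\Z_k}$ for some $k>1$, and we may assume $\Z_k$ is exactly the kernel of the action on $Q$, so that $S^1/\Z_k$ acts effectively on $Q$. As in the proof of Corollary~\ref{same}, $Q$ is connected and contains both $X$ and $Y$, so $Q^{S^1}=X\cup Y$; since the action is not semifree, Lemma~\ref{i<j} gives $\dim(X)=\dim(Y)$, whence $\dim(Q)-\dim(X)=\dim(Q)-\dim(Y)=2$. Thus $N_X^Q$ and $N_Y^Q$ are complex line bundles and $X$, $Y$ have complex codimension one in $Q$. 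The single nonzero weight of the circle on $N_X^Q$ is divisible by $k$, and it must in fact be $\pm k$: otherwise a nontrivial subgroup of $S^1/\Z_k$ would act trivially on a tubular neighbourhood of $X$ in $Q$, hence (by connectedness) on all of $Q$, contradicting effectiveness. Since we may take $\phi(X)<\phi(Y)$, $X$ is the minimum of $\phi|_Q$, so this weight is $+k$; the same argument at the maximum $Y$ shows the weight on $N_Y^Q$ is $-k$. By Lemma~\ref{Chern}, $e^{S^1}(N_X^Q)=kt+c_1(N_X^Q)$ and $e^{S^1}(N_Y^Q)=-kt+c_1(N_Y^Q)$. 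Finally, $Q\subsetneq M$ forces $\dim(M)-\dim(Y)\ge\dim(Q)+2-\dim(Y)=4$, so $\lambda_Y\ge 2$.

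Now for the main identity. By Lemma~\ref{i<j} there is $\at\in H_{S^1}^2(M;\Z)$ with $\at|_X=e^{S^1}(N_X^Q)$ and $\at|_Y=\sigma\,e^{S^1}(N_Y^Q)$ for some $\sigma\in\{+1,-1\}$. Since $\phi$ is equivariantly perfect over $\R$ and $\lambda_Y\ge 2$, restriction is an isomorphism $H^2_{S^1}(M;\R)\cong H^2_{S^1}(X;\R)$, and because $b_2(X)=1$ we have $H^2_{S^1}(X;\R)=\R\,u\oplus\R\,t$ with $u:=[\omega|_X]$; write $c_1(N_X^Q)=\alpha u$. Let $\ut\in H^2_{S^1}(M;\R)$ be the class of Lemma~\ref{uvt} for the component $X$, so $\ut|_X=u$ and $\ut|_Y=[\omega|_Y]+(\phi(X)-\phi(Y))\,t=:v+mt$ with $m\ne 0$. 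The isomorphism above forces $\at=\alpha\ut+kt$, hence $\at|_Y=\alpha v+(\alpha m+k)\,t$. Comparing the coefficients of $t$ in $\at|_Y$ and in $\sigma\,e^{S^1}(N_Y^Q)=-\sigma k\,t+\sigma c_1(N_Y^Q)$ yields $\alpha m+k=-\sigma k$. If $\sigma=-1$ this says $\alpha m=0$, hence $\alpha=0$ and $c_1(N_X^Q)=0$, as claimed.

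So the work is in the case $\sigma=+1$, where instead $\alpha m=-2k$; here I would argue that $\alpha=0$ anyway. Reduce $Q$ at a regular value $c\in(\phi(X),\phi(Y))$: the reduced space $Q_c$ is the projectivization of the line bundle $N_X^Q$, so its underlying space is $X$, and it is equally $\P(N_Y^Q)$, with underlying space $Y$. In particular $X$ and $Y$ are diffeomorphic, so $b_2(Y)=1$ and $c_1(N_Y^Q)=\alpha'v$; comparing the $H^2(Y)$-parts of the identity above gives $\alpha=\alpha'$. One then identifies the principal circle bundle $\phi|_Q^{-1}(c)\to Q_c$ in two ways --- as the unit frame bundle of $N_X^Q$ over $X$, and, since $Y$ is the maximum, as that of the conjugate bundle $\overline{N_Y^Q}$ over $Y$ --- and feeds this into the Duistermaat--Heckman description of the affine family $c\mapsto[\omega_c]$, which by Lemma~\ref{uvt} equals $\kappa_c(\ut)-(\phi(X)-c)\,\kappa_c(t)$ and tends to $[\omega|_X]$, resp. $[\omega|_Y]$, as $c\to\phi(X)$, resp. $c\to\phi(Y)$. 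This produces one more linear relation among $\alpha,\alpha',m$ and $k$; substituting $\alpha=\alpha'$ and $\alpha m=-2k$ and using $k\ge 2$ then forces $\alpha=0$. I expect this last step to be the main obstacle: it requires carrying the weight and orientation bookkeeping through both identifications $Q_c\cong X$ and $Q_c\cong Y$ carefully enough to extract the extra relation with the correct signs and to see that $\sigma=+1$ with $\alpha\ne 0$ is incompatible with $k\ge 2$ (roughly, it would force the induced orientation-preserving diffeomorphism $X\cong Y$ to carry $[\omega|_X]$ to $-[\omega|_Y]$).
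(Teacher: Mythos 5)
Your argument runs parallel to the paper's (which also works by applying Corollary~\ref{TW'} to $Q$), and the half you complete is fine: getting the sign dichotomy from the integral class of Lemma~\ref{i<j} and the restriction isomorphism $H^2_{S^1}(M;\R)\cong H^2_{S^1}(X;\R)$ is a legitimate variant of the paper's step ``$f(v)=\pm u$ by primitivity.'' But there is a genuine gap: the case $\sigma=+1$, i.e.\ $\alpha m=-2k$ (equivalently, in the paper's notation for the induced semifree action on $Q$, the possibility $f([\omega|_Y])=-[\omega|_X]$, i.e.\ $c_1(N_X^Q)=\frac{2}{\phi'(Y)-\phi'(X)}[\omega|_X]$), is exactly the configuration that cannot be excluded by topology alone, and you leave it unproved -- you yourself flag it as ``the main obstacle'' and offer only a plan (two identifications of the circle bundle over $Q_c$, Duistermaat--Heckman, ``one more linear relation among $\alpha,\alpha',m,k$'') whose output is never derived and whose sign bookkeeping is precisely the unsettled point. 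This is the step for which the paper invokes the last clause of Corollary~\ref{TW'}: $s\,f([\omega|_X])+(1-s)[\omega|_Y]\neq 0$ for all $s\in(0,1)$, because these classes are the (nonzero) reduced symplectic classes of $Q$; that positivity is what rules out the minus sign, and without it your proof is incomplete.

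The good news is that the missing case closes quickly with tools already in the paper, along the lines you gesture at but more simply: restrict to $Q$ and reduce at a regular value $c$ strictly between $\phi(X)$ and $\phi(Y)$ (such $c$ exist and $Q_c\neq\emptyset$ since $Q$ is connected). By Proposition~\ref{TW} applied to $Q$ on the $X$ side, $H^2(Q_c;\R)\cong H^2_{S^1}(X;\R)/\big(e^{S^1}(N_X^Q)\big)$, where $e^{S^1}(N_X^Q)=kt+\alpha u$, so $kt\equiv-\alpha u$ there; and by Lemma~\ref{uvt} the class $\big(\ut-t(\phi(X)-c)\big)\big|_Q$ maps to $[\omega_c]$. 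Hence $[\omega_c]=\big(1+\tfrac{\alpha}{k}(\phi(X)-c)\big)u$, which must be nonzero for every such $c$ because $\omega_c$ is symplectic. If $\alpha m=-2k$ (with $m=\phi(X)-\phi(Y)$), this vanishes at the midpoint $c=\phi(X)-\tfrac{m}{2}$, a contradiction; so $\alpha=0$ in all cases. (Two smaller points: the reduction ``we may take $\phi(X)<\phi(Y)$'' is not a symmetry of the hypotheses, since only $b_2(X)=1$ is assumed -- you should say the other case follows by replacing $\phi$ with $-\phi$, which changes neither hypotheses nor conclusion; and the claim $b_2(Y)=1$, $c_1(N_Y^Q)=\alpha'v$ used in your unfinished step is not needed once the argument above is used.)
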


\begin{proof}
By Lemma~\ref{i<j}, $\dim(X) = \dim(Y)$.
Since $\dim(M) > \dim(Q)$, the fact that $\dim(Q) - \dim(Y) = 2$
implies that $\dim(M) - \dim(X) = \dim(M) - \dim(Y)  > 2$.
Hence, $ H^2(M;\R) =
H^2(X;\R) =H^2(Y; \R)= \R$. In particular, after possibly multiplying
$[\omega]$ by a constant,  we may assume that  $[\omega]$ is a
primitive integral class. The induced $S^1/\Z_q$ action on $Q =
M^{\Z_q}$ is semifree, and the moment map for this action is $\phi'
= \frac{\phi}{q}$. Let $u =[\omega|_X]$, $v =[\omega|_Y]$, and $m =
\phi'(Y) - \phi'(X)$.

Since $\dim(Q) - \dim(Y) = 2$, $e^{S^1}(N_Y^Q) = -t + e(N_Y^Q)$
and so $H^*(Y;\Z) \simeq  H^*_{S^1}(Y;\Z)/e^{S^1}(N_Y^Q)$
(see Remark~\ref{two});
similarly, $e^{S^1}(N_X^Q) = t + e(N_X^Q)$ and
$H^*(X;\Z) \simeq  H^*_{S^1}(X;\Z)/e^{S^1}(N_X^Q)$.
Therefore,
by Corollary~\ref{TW'} (applied on $Q$), there exists an
isomorphism $f \colon H^*(Y;\Z) \to H^*(X;\Z)$ so that $f(v) = u
-m\, e(N^Q_X)$
and so that $s f(v) + (1-s) u \neq 0$ for all $s \in (0,1)$.
On the one hand, by Lemma~\ref{primitive}, both $u$ and $v$ are
primitive integral classes.
Since $f$ is an isomorphism, $f(v)$ is also
primitive. Since $H^2(X;\R)  = \R$, this implies that $f(v)=\pm u$.
On the other hand, since $H^2(X;\R) = \R$, the fact that $s f(v) +
(1-s)u \neq 0$ for all $s \in (0,1)$ implies that $f(v)$ is a {\em
positive} multiple of $u$. Together, these two claims imply that
$f(v) = u$. Since $f(v) = u - m \, e(N^Q_X)$ and $m \neq 0$, this
implies that $c_1(N_X^Q) = e(N_X^Q) = 0$.
\end{proof}

\begin{lemma}\label{1stChern}
Let the circle act on a  compact  symplectic manifold $(M,
\omega)$  with moment map $\phi \colon M \to \R$. Assume that
$M^{S^1}$ has exactly two components, $X$ and $Y$.
Assume that  $b_2(X) = 1$,  and
let $Q \subsetneq M$ be an isotropy submanifold such that $\dim(Q) - \dim(Y)
> 2$.
Then
$$ c_1(N_Q|_X)=   2 \, \Gamma_Q \frac{u}{m},$$
where  $N_Q$ is the normal bundle of $Q$ in $M$,
$\Gamma_Q$ is the sum  of the weights (counted with
multiplicities) of the isotropy action on $N_Q|_X$,
$m=\phi(Y)-\phi(X)$, and $u = [\omega|_X]$.
\end{lemma}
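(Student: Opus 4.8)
The plan is to compare the equivariant first Chern class of $N_Q$ restricted to $X$ with its restriction to $Y$, using Corollary~\ref{TW'} applied to the isotropy submanifold $Q$ itself. First I would set up notation. Write $Q = M^{\Z_k}$ for some $k>1$; since $Q\subsetneq M$ is a genuine isotropy submanifold the action is not semifree, so Lemma~\ref{i<j} gives $\dim(X)=\dim(Y)$, and as in the proof of Corollary~\ref{same} the manifold $Q$ is connected and contains both $X$ and $Y$, with $Q^{S^1}=M^{S^1}=X\cup Y$. (Replacing $S^1$ by its quotient by the ineffective subgroup changes neither $\Gamma_Q/m$ nor the conclusion, so we may assume the action effective.) Assume $\phi(X)<\phi(Y)$ and set $u=[\omega|_X]$, $v=[\omega|_Y]$, $m=\phi(Y)-\phi(X)\neq 0$. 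Since $\dim(Q)-\dim(Y)$ is even and $>2$, hence $\geq 4$, and $\dim(M)-\dim(Q)\geq 2$, we get $\dim(M)-\dim(Y)\geq 6$, so $\lambda_Y\geq 3$.

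Next I would pin down $H^2_{S^1}(M;\R)$. Since $\lambda_Y\geq 2$, restriction $H^2_{S^1}(M;\R)\to H^2_{S^1}(X;\R)$ is an isomorphism, and since $b_2(X)=1$ and $X$ is a positive-dimensional symplectic manifold we have $H^2(X;\R)=\R u$ with $u\neq 0$; hence $H^2_{S^1}(M;\R)=\R\,\ut\oplus\R\,t$, where $\ut$ is the class from Lemma~\ref{uvt} (taken at $F=X$), so that $\ut|_X=u$ and $\ut|_Y=v-mt$. Write $c_1(N_Q|_X)=a\,u$ in $H^2(X;\R)$. Then $c_1^{S^1}(N_Q|_X)=a\,u+\Gamma_Q t$ in $H^2_{S^1}(X;\R)$, and this is the restriction to $X$ of the class $c_1^{S^1}(N_Q)\in H^2_{S^1}(Q;\R)$.

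Now I would apply Corollary~\ref{TW'} to $Q$ (with the restricted $S^1$-action and moment map $\phi|_Q$), whose two fixed components are $X$ and $Y$; taking $i=2$ and using $2<\dim(Q)-\dim(X)=\dim(Q)-\dim(Y)$, the equivariant Euler classes $e^{S^1}(N_X^Q)$ and $e^{S^1}(N_Y^Q)$ live in degree $>2$, so in degree $2$ the quotient groups in that statement are the full groups, and we obtain an isomorphism $f\colon H^2_{S^1}(X;\R)\to H^2_{S^1}(Y;\R)$ with $f(\at|_X)=\at|_Y$ for every $\at\in H^2_{S^1}(Q;\R)$. Feeding in the restrictions of $\ut$ and $t$ from $M$ gives $f(u)=v-mt$ and $f(t)=t$; feeding in $\at=c_1^{S^1}(N_Q)$ gives $f\big(c_1^{S^1}(N_Q|_X)\big)=c_1^{S^1}(N_Q|_Y)$. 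Therefore
\[
c_1^{S^1}(N_Q|_Y)=f(a\,u+\Gamma_Q t)=a\,v+(\Gamma_Q-a\,m)\,t .
\]
On the other hand, over the fixed component $Y$ one has $c_1^{S^1}(N_Q|_Y)=c_1(N_Q|_Y)+\Gamma_Q'\,t$, where $\Gamma_Q'$ is the sum of the weights of the isotropy action on $N_Q|_Y$; comparing $t$-coefficients forces $\Gamma_Q-a\,m=\Gamma_Q'$. Finally, $N_Q|_X$ and $N_Q|_Y$ are the subbundles of $N_X$ and $N_Y$ on which $\Z_k$ acts without nonzero fixed vectors, that is, the sums of the weight subbundles whose weight is not divisible by $k$; since this condition is invariant under negation, Corollary~\ref{same} ($\Xi_X=-\Xi_Y$) shows the weights on $N_Q|_Y$ are precisely the negatives of those on $N_Q|_X$, so $\Gamma_Q'=-\Gamma_Q$. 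Hence $a\,m=2\Gamma_Q$ and $c_1(N_Q|_X)=a\,u=\tfrac{2\Gamma_Q}{m}\,u$.

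The step I expect to be the main obstacle is the application of Corollary~\ref{TW'} to $Q$: one must check that the corollary, and the underlying Proposition~\ref{TW}, still apply to the restricted (possibly non-effective) $S^1$-action on $Q$, and that over $\R$ all normalizations are consistent -- routing everything through classes pulled back from $M$ is what keeps ``$t$'' and ``$m$'' honest. The remaining ingredients are the degree count that makes the Euler-class ideals vanish in degree $2$ (which is exactly where the hypothesis $\dim(Q)-\dim(Y)>2$ enters) and the two-dimensionality of $H^2_{S^1}(X;\R)$ (where $b_2(X)=1$ enters).
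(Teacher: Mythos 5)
Your proposal is correct and takes essentially the same route as the paper: the paper relies on exactly the same inputs ($b_2(X)=1$, the bound $\dim(Q)-\dim(Y)>2$ giving control of degree $2$, the class $\ut$ from Lemma~\ref{uvt}, and Corollary~\ref{same} for the weight sum $-\Gamma_Q$ on $N_Q|_Y$), writing $c_1^{S^1}(N_Q)=a\,\ut+b\,(\ut+mt)$ in $H^2_{S^1}(Q;\R)$ and restricting to points of $X$ and $Y$. The only cosmetic difference is that you transport the comparison through the degree-$2$ case of Corollary~\ref{TW'} and match $t$-coefficients on $Y$, whereas the paper uses directly that restriction identifies $H^2_{S^1}(Q;\R)$ with $\R u\oplus\R t$; the bookkeeping is the same.
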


\begin{proof}
By Corollary~\ref{same}, the sum of the weights
(counted with multiplicity) of the $S^1$ action on $N_Q\big|_Y$ is $-\Gamma_Q$.
Hence,
\begin{gather*}
c_1^{S^1}(N_Q)\big|_x = \Gamma_Q\, t \quad \forall \ x \in X ,\quad
\mbox{and}\quad
 c_1^{S^1}(N_Q)\big|_y = - \Gamma_Q\, t \quad \forall \ y \in Y.
\end{gather*}

By Lemma~\ref{uvt}, there exists $\ut \in H^2_{S^1}(Q;\R)$
such that $\ut|_X = u$ and $\ut|_Y = v - mt$, where
$v = [\omega|_Y]$.
Since $H^2(X;\R) = \R$ and $\dim(Q) - \dim(Y) > 2$,
there exists $a$ and $b$ in $\R$ such that
$$c_1^{S^1}(N_Q) = a \ut + b (\ut + mt) .$$
Therefore,
\begin{gather*}
c_1^{S^1}(N_Q)\big|_x  =  b m t \quad \forall \ x \in X , \quad
\mbox{and}\quad c_1^{S^1}(N_Q)\big|_y  =  - a m t \quad  \forall \ y
\in Y .
\end{gather*}
The claim follows immediately.

\end{proof}

\begin{lemma}\label{alpha}
Let the circle act effectively on a  compact  symplectic manifold $(M,
\omega)$  with moment map $\phi \colon M \to \R$. Assume that $M^{S^1}$ has
exactly two components, $X$ and $Y$.
Assume that $b_{2i}(X) = 1$
for all $i \in \left\{0,\dots,\frac{1}{2}\dim(X)\right\}$.
Finally, assume that the action is not semifree, and split
$N_X = \bigoplus_k V_k$, where
$N_X$ is the normal bundle to $X$ in $M$
and  $V_k \subset N_X$ is the subbundle on which $S^1$
acts with weight $k$.
Let $V \subset N_X$
be the direct sum of some subset of the $V_k$'s.
If $\rank_\C V > 1$, then
$$c_1(V)=  \nu \, \Gamma_V \frac{u}{m},
\quad \mbox{where}\ 0 <  \nu  < 2. $$
Here,
$\Gamma_V$ is the sum of the weights (counted with
multiplicities) of the $S^1$ action on $V$,  $m = \phi(Y) -
\phi(X)$ and $u = [\omega|_X]$.
\end{lemma}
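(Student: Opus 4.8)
The plan is to work on the symplectic reductions of $M$, which by \S5 are projectivizations of $N_X$ and of $N_Y$. First I would normalize: as the action is not semifree, Lemma~\ref{i<j} gives $\dim(X)=\dim(Y)$, and replacing the action by its inverse if necessary I may assume $\phi(X)<\phi(Y)$, so that $X$ is the minimum of $\phi$, $m>0$, and every weight on $N_X$ -- hence $\Gamma_V$ -- is positive. Since $b_2(X)=1$, the class $u=[\omega|_X]$ spans $H^2(X;\R)$ and $H^2_{S^1}(X;\R)=\R u\oplus\R t$; thus $c_1^{S^1}(V)=a\,u+\Gamma_V\,t$ for a unique $a\in\R$ (the $t$-coefficient obtained by restricting to a point of $X$) and $c_1(V)=a\,u$, so the lemma reduces to the inequalities $am>0$ and $am<2\Gamma_V$.

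For a regular value $c\in(\phi(X),\phi(Y))$ and $M_c=\phi^{-1}(c)/S^1$, Proposition~\ref{TW} identifies $H^*(M_c;\R)$ with $H^*(X;\R)[t]/(e^{S^1}(N_X))$ and, by Lemma~\ref{uvt}, $[\omega_c]=u+(c-\phi(X))\,t$; symmetrically $[\omega_c]=v+(c-\phi(Y))\,t$ with $v=[\omega|_Y]$, the two descriptions matched by $u\mapsto v-mt$ (Corollary~\ref{TW'}), so $mt=v-u$ with $u$ and $v$ pulled back from $X$ and $Y$ via the two bundle projections. Since $V\subset N_X$ is $S^1$-invariant and $\rank_\C V>1$, its fibrewise projectivization $\P(V)\subset M_c$ is a compact symplectic suborbifold with positive-dimensional fibres over $X$, with $H^*(\P(V);\R)=H^*(X;\R)[t]/(e^{S^1}(V))$ and $[\omega_c]|_{\P(V)}=u+(c-\phi(X))\,t$. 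Now I would exploit positivity of $[\omega_c]$: for every complex curve $C\subset\P(V)$ one has $\int_C[\omega_c]>0$ for all $c$ in the interval, so letting $c\to\phi(X)^+$ and $c\to\phi(Y)^-$ gives $\int_C u\ge 0$ and $\int_C v\ge 0$, whence $m\int_C t$ lies between $-\int_C u$ and $\int_C v$; applied to curves in $\P(V)$ that are sections, over a curve $\ell\cong\CP^1$ in $X$, coming from the extreme $S^1$-equivariant line sub- and quotient bundles of $V|_\ell=\bigoplus_k V_k|_\ell$, this bounds $a$ (read off from $\int_\ell c_1(V)=a\int_\ell u$) and forces $0<am<2\Gamma_V$, i.e.\ $0<\nu<2$. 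Here $b_{2i}(X)=1$ for all $i$ is used to guarantee that $c_1(V)$, and indeed all Chern classes of the $V_k$, are numerical multiples of powers of $u$, so that $H^*(\P(V);\R)$ and the intersection numbers entering the estimate are explicit.

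The step I expect to be the real obstacle is making the last estimate precise -- that is, extracting a clean two-sided bound on $c_1(V)$ while the equivariant Euler class $e^{S^1}(V)$, and hence $H^*(\P(V);\R)$, also involves the higher Chern classes $c_j(V)$. Equivalently, one must choose the test curves $C$ (or, in the alternative approach, the coefficient of the right power of $s$ in $\int_{\P(V)}(u+st)^{\frac12\dim\P(V)}$) so that only $c_1(V)$ survives in the relevant degree, and control the possible denominators coming from the weighted projectivization. A useful cross-check, available when $M$ also satisfies $\dim(X)+\dim(Y)=\dim(M)-2$: combining $c_1(V)+c_1(V^{\perp})=c_1(N_X)$ with the formula $c_1(N_X)=(2\Gamma_X/m)\,u-c_1(X)$ that follows from Lemma~\ref{qchern} and Corollary~\ref{same}, the upper bound for $c_1(V)$ becomes a lower bound for $c_1(V^{\perp})$ and conversely, so a single one-sided estimate suffices. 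In every case the genuinely new input is the positivity of the reduced symplectic form on the suborbifold $\P(V)$, and it is this that confines $\nu$ strictly to $(0,2)$.
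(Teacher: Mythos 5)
Your proposal does not follow the paper's route, and as it stands it has a genuine gap at exactly the point you flag yourself: the two-sided bound $0<\nu<2$ is never derived. The mechanism you propose -- positivity of $[\omega_c]$ on ``complex curves'' $C\subset\P(V)\subset M_c$, in particular on sections over curves $\ell\cong\CP^1$ in $X$ coming from extreme line subbundles of $V|_\ell$ -- is not available in this setting. The reduced space is only a symplectic orbifold; $\int_C[\omega_c]>0$ holds for curves that are holomorphic with respect to a compatible almost complex structure, and nothing guarantees the existence of such curves in the homology classes you need (nor even of an embedded two-sphere $\ell\subset X$ dual to $u$, since $X$ is merely a symplectic manifold with $b_{2i}(X)=1$). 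The limiting argument as $c\to\phi(X)^+$ and $c\to\phi(Y)^-$ compounds this: a curve that is $J_c$-holomorphic for one value of $c$ gives positivity only for that $c$, so the inequalities $\int_C u\ge 0$ and $\int_C v\ge 0$ are not both justified. Even granting all of this, the step that isolates $c_1(V)$ from the higher Chern classes and pins $\nu$ strictly inside $(0,2)$ is exactly what you defer, so the conclusion is not reached.

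The paper's proof runs on entirely different fuel, and it is worth seeing why, because your approach never touches it. Since the action is not semifree, there is an isotropy submanifold $Q=M^{\Z_q}$, and Lemma~\ref{i<j} produces a global class $\at\in H^{2r}_{S^1}(M;\R)$ with $\at|_X=e^{S^1}(N_X^Q)$ and $\at|_Y=\pm e^{S^1}(N_Y^Q)$; together with Corollary~\ref{same} this forces the extreme coefficients of $e^{S^1}(N_X^Q)$, written in the basis $(u/m)^i(u/m+t)^{r-i}$ of $H^{\even}_{S^1}(X;\R)\simeq\R[u,t]/u^{\frac12\dim(X)+1}$, to satisfy $a_0=\pm a_r$. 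On the other hand, Lemma~\ref{euler} makes $\prod_k e^{S^1}(V_k)=e^{S^1}(N_X)$ divide (up to a correction term) a power of $u/m+t$, so after factoring over $\C$ into linear factors each $e^{S^1}(V_k)$ has constrained subleading coefficient; the symmetric constraint $a_0=\pm a_r$ forces the relevant parameter to have modulus one, which yields $e^{S^1}(V_k)=(kt)^{r_k}+\nu_k k r_k\frac{u}{m}(kt)^{r_k-1}+\lot$ with $0<\nu_k<2$ except for at most one rank-one summand at each endpoint, and Lemma~\ref{Chern} converts this into the statement about $c_1(V)$. This is a purely algebraic constraint linking the data at $X$ and at $Y$ through the isotropy submanifold; it uses the non-semifree hypothesis essentially, requires neither $\dim(X)+\dim(Y)=\dim(M)-2$ nor any holomorphic curves, and it is the ingredient your positivity scheme would have to replace before the proposal could be completed.
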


\begin{proof}
Let $Q = M^{\Z_q} \subsetneq M$ be an isotropy submanifold.
By Lemma~\ref{i<j}, $\dim(X) = \dim(Y)$, and  there exists
$\at \in H^{2r}_{S^1}(M;\R)$ so that
\begin{equation}\label{<1}
\at|_X = e^{S^1}(N_X^Q) \quad \mbox{and} \quad \at|_Y = \pm e^{S^1}(N_Y^Q).
\end{equation}
Here,
$N_X^Q$ and $N_Y^Q$
denote the normal bundles of $X$ and $Y$, respectively, in $Q$,
and $\dim(Q) - \dim(X) = \dim(Q) - \dim(Y) = 2r$.

Let $\Lambda_X^Q$ denote the product of the weights (counted with
multiplicity) of the isotropy action on $N_X^Q$. By
Corollary~\ref{same},
the product of the weights
of the isotropy action on $N_Y^Q$ is $(-1)^r\Lambda_X^Q$.
Hence,
\begin{gather}\label{<2}
e^{S^1}\big(N_X^Q\big)\big|_x =  \Lambda_X^Q t^r \ \  \forall \ x \in X,
\ \  \mbox{and}\ \
e^{S^1}\big(N_Y^Q\big)\big|_y  = (-1)^r \Lambda_X^Q t^r \ \  \forall \ y \in Y.
\end{gather}

By Lemma \ref{uvt}, there exists $\ut \in H^2_{S^1}(M;\R)$ such that
$\ut|_X = u$ and $\ut|_Y = v - mt$, where $v = [\omega|_Y]$.
Since $X$ is symplectic and
$b_{2i}(X) = 1$ for
all $i \in \left\{0,\dots,\frac{1}{2}\dim(X) \right\}$,
$$H^{\even}(X;\R) = \R[u]/u^{\frac{1}{2}\dim(X) + 1}.$$
Hence, since $\dim(M) - \dim(Y) >
\dim(Q) - \dim(Y) = 2r$,
we can write
\begin{gather}\label{<3}
\at = \sum a_i \left(
\frac{\ut}{m}\right)^{i} \left(\frac{\ut}{m} + t \right)^{r-i},
\quad \mbox{where} \\
\label{<4}
\at|_x =  a_0 t^r \ \  \forall \ x \in X,
 \quad \mbox{and}\quad
\at|_y =  a_r (-t)^r \ \  \forall \ y \in Y.
\end{gather}
Combining equations \eqref{<1}, \eqref{<2},
and \eqref{<4}, we see that $a_0 = \pm a_r$.
Therefore, \eqref{<3} implies that
\begin{gather}
\label{NXQ}
e^{S^1}(N_X^Q) =  \sum a_i \left(
\frac{u}{m}\right)^i \left(\frac{u}{m} + t \right)^{r-i},
\quad \mbox{where }
a_0 = \pm a_r.
\end{gather}

On the other hand,
by Lemma~\ref{euler},
$\left(\frac{u}{m} + t \right)^{\frac{1}{2}\dim(X) +1}$
is a multiple of $e^{S^1}(N_X)$.
Since $N_X = \bigoplus_k V_k$,  and hence
$$e^{S^1}(N_X) = \prod e^{S^1}(V_k)  \quad \in
H_{S^1}^\even(X;\R) \simeq \R[u,t]/ u^{\frac{1}{2}\dim(X) + 1},$$ this
implies that the $e^{S^1}(V_k)$'s can be identified
with polynomials in $\C[u,t]$
whose product divides
$\left(  \frac{u}{m} +t \right)^{\frac{1}{2}\dim(X)+1}  +
\left( \lambda  \frac{u}{m}\right)^{\frac{1}{2}\dim(X) +1}$
for some $\lambda  \in \C$.
Write $$e^{S^1}(V_k) =
  k^{r_k} \sum \alpha_{k,i} \left( \frac{u}{m} \right)^i
\left( \frac{u}{m} + t \right)^{r_k - i},$$
where $r_k = \rank_\C V_k$; note that $\alpha_{k,0} = 1$.
Since
$$\left( \frac{u}{m} + t  \right)^{\frac{1}{2}\dim(X)+1}
+  \left( \lambda  \frac{u}{m} \right)^{\frac{1}{2}\dim(X) +1} =
\prod_{i=0}^{\frac{1}{2}\dim(X)}
\left ( \frac{u}{m} + t +   e^{\frac{i 4 \pi \sqrt{-1}}{\dim (X) + 2}} \,
\lambda \frac{u}{m}
\right),$$
this implies that for all $k$,
$$\left| {\alpha_{k,r_k}} \right| = \left| \lambda \right|^{r_k}
\quad \mbox{and} \quad \left| {\alpha_{k,1}} \right|
\leq  r_k |\lambda|.$$
Moreover,  if $r_k > 1$ then $|\alpha_{k,1}| < r_k |\lambda|$,
while if $r_k = r_{k'} = 1$, then
$\alpha_{k,1} \neq \alpha_{k',1}$ unless $k  = k'$.
Since $N_X^Q = \bigoplus_n V_{nq}$,
the  fact that $a_0 = \pm a_r$
in \eqref{NXQ}
implies that $|\lambda| = 1$.
Therefore, (since $e^{S^1}(V_k)$ is real)
$$ e^{S^1}(V_k)= (kt)^{r_k} + \nu_k  k r_k \frac{u}{m} (k t)^{r_k-1}  +
\lot,$$ where
$0 < \nu_k < 2$   for all $k$
except possibly:
\begin{itemize}
\item at most one $k$ such that $r_k = 1$ and
$\nu_k = 0$; and
\item
at most one $k$ such that $r_k = 1$ and $\nu_k = 2$.
\end{itemize}
By Lemma~\ref{Chern},
$$c_1(V_k )=\nu_k k r_k \frac{u}{m}.$$
The claim follows immediately.
\end{proof}

\begin{proposition}\label{case3}
Let the circle act effectively on a  compact  symplectic manifold $(M,
\omega)$  with moment map $\phi \colon M \to \R$.
Assume that $M^{S^1}$ has exactly
two components, $X$ and $Y$, and that
$b_{2i}(X) = 1$ for all $i \in \left\{0,\dots,\frac{1}{2}\dim(X)\right\}$.
\begin{itemize}
\item
No point has stabilizer $\Z_k$ for any $k > 2$.
\item
If the action is not semifree, then
$$
\dim(M^{\Z_2}) - \dim(Y) = 2
\quad \mbox{or} \quad \dim(M) - \dim(M^{\Z_2}) = 2 \quad \mbox {(or both)}.$$
\end{itemize}
\end{proposition}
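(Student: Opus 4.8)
The plan is to read off the weights of the isotropy representation on $N_X$ and then play Lemma~\ref{1stChern} against Lemma~\ref{alpha}. We may assume the action is not semifree, since otherwise there are no stabilizers $\Z_k$ with $k\ge2$ and the second assertion is vacuous. By Proposition~\ref{distinctweights} the set of distinct weights of the $S^1$-action on $N_X$ is $\{1,\dots,N\}$ for some $N$; write $N_X=\bigoplus_{k=1}^{N}V_k$ with $V_k$ the weight-$k$ subbundle and $r_k=\rank_\C V_k\ge1$. Since the action is not semifree, Lemma~\ref{i<j} gives $\dim X=\dim Y=:2d$, hence $\sum_k r_k=\rank_\C N_X=d+1$, and $N\ge2$, so $d\ge1$; thus $u:=[\omega|_X]\ne0$ and $u/m\ne0$ in $H^2(X;\R)$, where $m:=\phi(Y)-\phi(X)>0$. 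For any prime $p\le N$, the isotropy submanifold $Q_p:=M^{\Z_p}$ is connected and contains $X$ and $Y$; by Corollary~\ref{same} the isotropy weights on $N_Y$ are the negatives of those on $N_X$ with the same multiplicities, so $N_X^{Q_p}=\bigoplus_{p\mid k}V_k$ and $N_{Q_p}|_X=\bigoplus_{p\nmid k}V_k$, with $\dim Q_p-\dim Y=2\sum_{p\mid k}r_k$ and $\dim M-\dim Q_p=2\sum_{p\nmid k}r_k$.

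The engine of the argument is the following. \emph{If $p\le N$ is a prime with $\sum_{p\mid k}r_k\ge2$ and $\sum_{p\nmid k}r_k\ge2$, we obtain a contradiction.} Indeed, then $\dim Q_p-\dim Y>2$, so Lemma~\ref{1stChern} applied to $Q=Q_p$ gives $c_1(N_{Q_p}|_X)=2\Gamma\tfrac{u}{m}$, where $\Gamma=\sum_{p\nmid k}k\,r_k>0$ since the weight $1$ summand always occurs. On the other hand $N_{Q_p}|_X=\bigoplus_{p\nmid k}V_k$ is a direct sum of some of the $V_k$ of complex rank $\sum_{p\nmid k}r_k\ge2$, so Lemma~\ref{alpha} gives $c_1(N_{Q_p}|_X)=\nu\,\Gamma\tfrac{u}{m}$ with $0<\nu<2$; since $\Gamma\tfrac{u}{m}\ne0$ this is impossible. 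Call this argument $(\star)$.

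\emph{No point has stabilizer $\Z_k$ for $k>2$.} It suffices to show $N\le2$: then all isotropy weights on $N_X$ and $N_Y$ lie in $\{\pm1,\pm2\}$, and every stabilizer, being the greatest common divisor of a set of weights, is trivial or $\Z_2$. Suppose $N\ge3$. Take $p=2$: then $\sum_{2\nmid k}r_k\ge r_1+r_3\ge2$ (weight $3\le N$ occurs), and if $N\ge4$ then also $\sum_{2\mid k}r_k\ge r_2+r_4\ge2$, so $(\star)$ yields a contradiction; hence $N=3$. With $N=3$, $(\star)$ forces $\sum_{2\mid k}r_k=r_2=1$ (otherwise the even part has rank $r_2\ge2$ and the odd part rank $r_1+r_3\ge2$); hence $\dim Q_2-\dim Y=2$, and Lemma~\ref{r=1} applied to $Q=Q_2$ gives $c_1(V_2)=c_1(N_X^{Q_2})=0$. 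Symmetrically, $p=3$ forces $r_3=1$ and $c_1(V_3)=0$. But $V_2\oplus V_3$ has complex rank $r_2+r_3=2$, so Lemma~\ref{alpha} gives $c_1(V_2)+c_1(V_3)=\nu(2+3)\tfrac{u}{m}$ with $0<\nu<2$, i.e.\ $0=5\nu\tfrac{u}{m}\ne0$, a contradiction. Hence $N\le2$.

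\emph{The dimension dichotomy.} Assume the action is not semifree. By the previous step and Proposition~\ref{distinctweights}, $N=2$, so the isotropy weights on $N_X$ are exactly $\{1,2\}$ with $r_1,r_2\ge1$; here $Q_2=M^{\Z_2}$ satisfies $N_X^{Q_2}=V_2$, $N_{Q_2}|_X=V_1$, $\dim M^{\Z_2}-\dim Y=2r_2$, and $\dim M-\dim M^{\Z_2}=2r_1$. Applying $(\star)$ with $p=2$, we cannot have both $r_1\ge2$ and $r_2\ge2$; hence $r_1=1$, giving $\dim M-\dim M^{\Z_2}=2$, or $r_2=1$, giving $\dim M^{\Z_2}-\dim Y=2$. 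I expect the main obstacle to be the bookkeeping around the boundary case $\dim Q_p-\dim Y=2$: this is exactly where Lemma~\ref{1stChern} must give way to Lemma~\ref{r=1}, which is why the case $N=3$ cannot be closed by a single application of $(\star)$ but needs the extra input $c_1(V_2)=c_1(V_3)=0$ together with Lemma~\ref{alpha} applied to $V_2\oplus V_3$. One should also verify at each step that $Q_p$ is an honest isotropy submanifold containing both $X$ and $Y$, that $b_2(X)=1$ so that Lemmas~\ref{r=1} and~\ref{1stChern} apply, and that $u/m\ne0$ in $H^2(X;\R)$.
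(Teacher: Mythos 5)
Your argument is essentially the paper's: the engine $(\star)$ -- playing Lemma~\ref{1stChern} against Lemma~\ref{alpha} on $N_{Q}\big|_X$ -- is exactly how the paper derives its dichotomy \eqref{c31} for isotropy submanifolds, and your endgame (Lemma~\ref{r=1} to get $c_1(V_2)=c_1(V_3)=0$, then Lemma~\ref{alpha} on $V_2\oplus V_3$) is the paper's contradiction with $V_{N-1}\oplus V_N$; the prime-by-prime bookkeeping and the split $N\geq 4$ versus $N=3$ is only a reorganization of the paper's uniform treatment of $M^{\Z_k}$ for $k\in\{2,\dots,N\}$, and your deduction of the second bullet from $(\star)$ with $p=2$ is the paper's \eqref{c31} applied to $Q=M^{\Z_2}$.

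There is, however, one step that is wrong as written. The equality $\sum_k r_k=\rank_\C N_X=d+1$ tacitly assumes $\dim(X)+\dim(Y)=\dim(M)-2$, which is \emph{not} among the hypotheses of Proposition~\ref{case3} (the proposition assumes only two fixed components and $b_{2i}(X)=1$, and is meant to stand without the minimal-dimension condition). You use this equality only to conclude $d\geq 1$, i.e.\ $\dim(X)>0$, which is what guarantees both that $u=[\omega|_X]\neq 0$ and -- crucially -- that $b_2(X)=1$ is actually part of the hypothesis, so that Lemmas~\ref{r=1}, \ref{1stChern} and \ref{alpha} apply. The conclusion is true, but it needs a different justification, which is the one the paper gives: when the action is not semifree, the weights $1$ and $2$ both occur on $N_X$, so $\rank_\C N_X\geq 2$, and $\dim(X)=\dim(Y)$ by Lemma~\ref{i<j}; hence the maximal fixed component has index at least $4$, and since $\phi$ is a perfect Morse--Bott function and $H^2(M;\R)\neq 0$, the minimal fixed component has nonzero $H^2$, forcing $\dim(X)=\dim(Y)\geq 2$. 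With that one-line repair, the rest of your argument checks out.
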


\begin{proof}
To begin, let  $Q \subset M$
be an isotropy submanifold such that $\dim(Q) - \dim(Y) > 2$
and $\dim(M) - \dim(Q) > 2$.
Let $N_Q$ be the normal bundle of $Q$ in $M$,
$\Gamma_Q$ be the sum of the weights (counted with multiplicity)
of the isotropy action on  $N_Q|_X$,
$m = \phi(Y) - \phi(X)$,
and $u = [\omega|_X]$.
By Lemma~\ref{alpha},
$$c_1(N_Q|_X)=  \nu \,\Gamma_Q\frac{u}{m},
\quad \mbox{where}\ \nu < 2.$$
On the other hand,  the fact that $\dim(M) - \dim(Q) > 2$ implies that $\dim(M) - \dim(Y) > 2$.
Since $\phi$ is a perfect Morse-Bott function,  $\dim(X) > 0$, and  so
$b_2(X) = 1$ by assumption.
Hence, by
Lemma~\ref{1stChern},
$$c_1(N_Q|_X)=  2\, \Gamma_Q \frac{u}{m}.$$
This gives a contradiction.
Therefore, for any isotropy submanifold $Q \subsetneq M$,
\begin{equation}\label{c31}
\dim(Q) - \dim(Y) = 2
\quad \mbox{or} \quad \dim(M) - \dim(Q) = 2 \quad \mbox {(or both)}.
\end{equation}

Let $N_X$ be the normal bundle to $X$.
By Proposition~\ref{distinctweights}, there exists $N \in \N$ so that
the set of distinct weights for the isotropy action on $N_X$ is
$\{1,2,\dots,N\}$.
Split $N_X= \sum_{k=1}^N
V_k$, where $V_k$ is the subbundle of $N_X$ on which $S^1$ acts with
weight $k$.

Assume that $N > 2$.
Then it is easy to check
that $\dim(M) - \dim(M^{\Z_k}) > 2$ for all $k \in \{2,\dots,N\}$.
By \eqref{c31},  this implies that $\dim(M^{\Z_k}) - \dim(Y) = 2$ for all such $k$.
Therefore, by Lemma~\ref{r=1},
$c_1(V_{N-1})=0 $ and $c_1(V_{N})=0$,
and so $c_1(V_{N-1} \oplus V_{N}) = 0$.
This contradicts
Lemma~\ref{alpha},
which implies that $c_1(V_N \oplus V_{N-1}) \neq 0$.
\end{proof}

\section{Proof  of Theorem~\ref{B} for actions which are not
semifree}

In this section, we prove Theorem~\ref{B} in the case that the
circle action is not semifree.

\begin{proposition}\label{case2}
Let the circle act effectively on a   compact  symplectic manifold $(M,
\omega)$  with moment map $\phi \colon M \to \R$.
Assume that $M^{S^1}$ has exactly
two components, $X$ and $Y$, and that $\dim(X)+\dim(Y)=\dim(M) - 2$.
Also assume that the  action is not semifree.
Then
\begin{gather*}
H^*(X;\Z) = \Z[u]/u^{i  + 1} \ \  \mbox{and} \ \  c(X) = (1+u)^{i+1};  \\
H^*(Y;\Z) = \Z[v]/v^{i + 1} \ \  \mbox{and} \ \  c(Y) = (1+v)^{i+1};   \\
\mbox{where} \quad \dim(X) =  \dim(Y) = 2i \geq 2.
\end{gather*}
Moreover, no point has stabilizer $\Z_k$ for any  $k > 2$;
$\dim(M^{\Z_2}) = \dim(M) - 2$;
\begin{align*}
c\big(N_{M^{\Z_2}}\big)\big|_X = 1+ u &\quad \mbox{and} \quad
c\big(N_{M^{\Z_2}}\big)\big|_Y = 1 + v; \\
c\big(N_X^{M^{\Z_2}}\big) = \frac {(1+u)^{i + 1}} {1 + 2u}
&\quad \mbox{and} \quad c\big(N_Y^{M^{\Z_2}}\big) = \frac
{(1+v)^{i + 1}} {1 + 2v},
\end{align*}
where
$N_{M^{\Z_2}}$ denotes the
normal bundle of $M^{\Z_2}$ in $M$, and
$N_X^{M^{\Z_2}}$ and $N_Y^{M^{\Z_2}}$ denote the normal bundles of $X$ and $Y$,
respectively, in $M^{\Z_2}$.
\end{proposition}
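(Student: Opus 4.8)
The plan is to re-run the argument of Proposition~\ref{case1}, now carrying along the unique isotropy submanifold $Q:=M^{\Z_2}$. Assume $\phi(X)<\phi(Y)$; since $\dim X=\dim Y=0$ would force $M\cong S^2$ with the standard (hence semifree) action, we have $\dim X>0$, and Lemma~\ref{i<j} gives $\dim X=\dim Y=2i$ with $i\ge 1$, $\dim M=4i+2$, and $\rank_\C N_X=\rank_\C N_Y=i+1$. Proposition~\ref{implies}, applied to $\pm\phi$, shows $H^*(X;\R)=\R[u]/u^{i+1}$ with $u=[\omega|_X]$ and $H^*(Y;\R)=\R[v]/v^{i+1}$ with $v=[\omega|_Y]$, so all even Betti numbers of $X$ and of $Y$ equal $1$ and Proposition~\ref{case3} applies. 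Hence no point has stabilizer $\Z_k$ for $k>2$; thus (the action not being semifree) $Q$ is the unique isotropy submanifold, connected and containing both $X$ and $Y$. By Proposition~\ref{distinctweights} the distinct weights of the isotropy representation on $N_X$ form $\{1,\dots,N\}$, and Proposition~\ref{case3} forces $N=2$. Writing $N_X=V_1\oplus V_2$ with $V_k$ the weight-$k$ summand and $r_k:=\rank_\C V_k\ge 1$, $r_1+r_2=i+1$, one has $N_X^Q=V_2$, $N_Q|_X=V_1$, $\dim Q-\dim Y=2r_2$, $\dim M-\dim Q=2r_1$, and by Corollary~\ref{same} the mirror picture over $Y$ (weights $-1,-2$). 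Proposition~\ref{case3} now yields the dichotomy $r_2=1$ or $r_1=1$.

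The crux --- and the step I expect to be the main obstacle --- is to prove $r_1=1$, i.e. $\dim M^{\Z_2}=\dim M-2$. Suppose instead $r_2=1$, $r_1=i\ge 2$. Then $V_2$ is a line bundle with $c_1(V_2)=0$ (Lemma~\ref{r=1}), so $e^{S^1}(V_2)=2t$; and Lemma~\ref{euler} gives $e^{S^1}(N_X)=2(t+u/m)^{i+1}$ with $m=\phi(Y)-\phi(X)$ and $\Lambda_X=2$. Dividing, and using $u^{i+1}=0$ in $H^*_{S^1}(X;\R)$, one gets $c_k(V_1)=\binom{i+1}{k}(u/m)^k$, in particular $c_i(V_1)=(i+1)(u/m)^i$. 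Feeding $c^{S^1}(N_X)=c^{S^1}(V_1)c^{S^1}(V_2)$ (via Lemma~\ref{Chern}) and the weights of $N_Y$ (namely $i$ copies of $-1$ and one $-2$, by Corollary~\ref{same}) into Lemma~\ref{qchern}, and simplifying via the defining relation $u+mt=0$, a brief calculation gives $c_i(X)=(i+3^i)(u/m)^i$. Since $\chi(X)=i+1=\int_X c_i(X)$, this forces $\int_X(u/m)^i=\frac{i+1}{i+3^i}$, hence $\int_X c_i(V_1)=(i+1)\int_X(u/m)^i=\frac{(i+1)^2}{i+3^i}$; but $c_i(V_1)$ is an integral class, so this integral lies in $\Z$, i.e. $(i+3^i)\mid(i+1)^2$ --- impossible, since $i+3^i>(i+1)^2$ for every $i\ge 2$. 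Therefore $r_1=1$. The delicate (but routine) point inside this step is extracting $c_i(X)$ from Lemma~\ref{qchern}: that formula is a rational expression in $t$ and $u$ which only becomes the honest Chern class of $X$ after imposing $u=-mt$.

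With $r_1=1$ the remainder follows the template of Proposition~\ref{case1}. The line bundle $V_1=N_Q|_X$ has $c_1(V_1)=2u/m$: by Lemma~\ref{1stChern} when $i\ge 2$ (then $\dim Q-\dim Y>2$), and by the same Euler-class division as above when $i=1$. After scaling $[\omega]$ to be primitive, Lemma~\ref{primitive} makes $u$ a generator of $H^2(X;\Z)$, so integrality of $c_1(V_1)$ forces $m\mid 2$; re-running the $c(X)$-computation (now with $\Lambda_X=2^i$) gives $c(X)=(1+2u/m)^{i+1}$, whence $c_i(X)=(i+1)(2u/m)^i$, and $\chi(X)=i+1$ yields $\int_X u^i=(m/2)^i$, forcing $m=2$, $c_1(V_1)=u$, and $u^i$ primitive. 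The same Chern-class bookkeeping --- using $c^{S^1}(V_1)=1+u+t$ and reading $c^{S^1}(V_2)$ off the factorization of $e^{S^1}(N_X)=2^i(t+u/m)^{i+1}$ through $e^{S^1}(V_1)$ --- gives, upon setting $t=0$, $c(N_Q)|_X=c(V_1)=1+u$ and $c(N_X^Q)|_X=c(V_2)=\frac{(1+u)^{i+1}}{1+2u}$; the argument over $Y$ is verbatim.

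It remains to pass to $\Z$-coefficients. Lemma~\ref{betti0} gives $H^*(X;\Z_p)=\Z_p[u]/u^{i+1}$ for every odd prime $p$ (since $\phi(X)-\phi(Y)=-2\not\equiv 0\bmod p$), and this, together with $H^*(X;\Q)=\Q[u]/u^{i+1}$, Poincar\'e duality, and the absence of $2$-torsion in $H^*(X;\Z)$ --- the one point requiring a separate argument, which I would supply via the semifree $S^1/\Z_2$-action on $Q$ --- upgrades to $H^*(X;\Z)=\Z[u]/u^{i+1}$, hence $c(X)=(1+u)^{i+1}$; similarly for $Y$. Combined with ``no point has stabilizer $\Z_k$ for $k>2$'' (Proposition~\ref{case3}) and $\dim M^{\Z_2}=\dim M-2$ (proved in the key step above), this establishes all assertions of the proposition. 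The two subtleties I anticipate are, in order of difficulty, the elimination of the exotic branch $r_2=1$ and the vanishing of $2$-torsion in $X$ and $Y$.
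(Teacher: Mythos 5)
Your route is essentially the paper's: the dichotomy $r_2=1$ versus $r_1=1$ is the paper's case division $\dim(M^{\Z_2})-\dim(Y)=2$ versus $>2$ (Lemmas~\ref{r=s=1} and \ref{normal-bundle}); your $(i+3^i)\mid(i+1)^2$ obstruction is exactly the Euler-characteristic computation of Lemma~\ref{r=s=1}, which the paper phrases as forcing $i=1$ rather than as a contradiction (note the case $i=1$, $r_1=r_2=1$ is genuinely realized and is consistent with the stated formulas because $u^2=0$ there); and your determination of $m=2$, of $c(X)=(1+u)^{i+1}$, and of the two normal-bundle Chern classes reproduces the computations in Lemma~\ref{normal-bundle} via Lemmas~\ref{euler}, \ref{1stChern}, \ref{Chern}, and \ref{qchern}. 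All of that part is correct.

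The genuine gap is the passage to $\Z$ coefficients. You state that the absence of $2$-torsion in $H^*(X;\Z)$ is ``the one point requiring a separate argument, which I would supply via the semifree $S^1/\Z_2$-action on $Q$,'' but you do not supply it, and it is not a routine verification: it is precisely the content of Lemmas~\ref{Q} and \ref{torsion-Grass}, the most delicate part of the paper's proof. The obvious tool fails twice. On $M$ itself, Lemma~\ref{betti0} is unavailable at the prime $2$ because $\phi(Y)-\phi(X)=2$, so the class $\ut$ of Lemma~\ref{uvt} restricts to $\pm 2t\equiv 0 \bmod 2$ at points of the other fixed component. On $Q=M^{\Z_2}$ with its semifree $S^1/\Z_2$-action one does get $\phi'(Y)-\phi'(X)=1$, but now the hypothesis $\dim(X)\le 2\lambda_Y-2$ of Lemma~\ref{betti0} is violated, since inside $Q$ one has $2\lambda_Y=\dim(Q)-\dim(Y)=2i=\dim(X)$. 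The paper's Lemma~\ref{Q} replaces Lemma~\ref{betti0} by a finer Poincar\'e-duality argument whose input is the subleading coefficient of $e^{S^1}\big(N_Y^{M^{\Z_2}}\big)$ -- namely the term $(1-i)v$ read off from $c\big(N_Y^{M^{\Z_2}}\big)=(1+v)^{i+1}/(1+2v)$ in Lemma~\ref{normal-bundle} -- together with a class $\mt\in H^2_{S^1}(Q;\Z_p)$ satisfying $\mt|_Y=\mu$ and $\mt|_x=(1-i)t\neq -it$ for $x\in X$; only then does one get $H^{2k+1}(X;\Z_p)=0$ for all primes (in particular $p=2$), and torsion-freeness via Lemma~\ref{torsion}. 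As written, your proposal establishes the conclusion only modulo torsion in $H^*(X;\Z)$ and $H^*(Y;\Z)$ (except when $i=1$, where $X$ and $Y$ are spheres), so the missing $2$-torsion step is a substantive piece of the proof rather than an omitted detail.
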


\begin{proof}
This claim follows from
Lemmas~\ref{isotropy-Q}, \ref{r=s=1}, \ref{normal-bundle}, and
 ~\ref{torsion-Grass}.
\end{proof}

To begin, note that
the following lemma is an immediate
consequence of Propositions~\ref{implies} and \ref{case3}.

\begin{lemma}\label{isotropy-Q}
If the assumptions of Proposition~\ref{case2} hold, then
no point has stabilizer $\Z_k$ for any $k > 2$.
\end{lemma}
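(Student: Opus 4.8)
The plan is to check that the hypotheses of Proposition~\ref{case3} are satisfied, since the first bullet of that proposition is verbatim the statement of the present lemma (and, conveniently, that bullet does not require the action to be semifree). Under the hypotheses of Proposition~\ref{case2} we already have an effective Hamiltonian circle action on a compact symplectic manifold whose fixed set $M^{S^1}$ has exactly two components $X$ and $Y$; the only additional input Proposition~\ref{case3} demands is that $b_{2i}(X) = 1$ for all $i \in \left\{0,\dots,\tfrac12\dim(X)\right\}$. So the entire task reduces to establishing this Betti-number condition for (one of) the fixed components.

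To do that, I would first relabel if necessary so that $\phi(X) < \phi(Y)$, making $X$ the minimal fixed component (if one prefers to keep $X$ fixed, one applies the same argument to $-\phi$ when $X$ happens to be the maximal component). Next I would observe that the hypothesis $\dim(X) + \dim(Y) = \dim(M) - 2$ is exactly the minimal-dimension condition: with two fixed components, $\sum_{F \subset M^{S^1}}\left(\dim(F) + 2\right) = \dim(X) + \dim(Y) + 4 = \dim(M) + 2$, so \eqref{eqdim} holds. Hence Proposition~\ref{implies}(2) applies and yields $H^*(X;\R) = \R[u]/u^{\frac12\dim(X)+1}$, from which $b_{2i}(X) = 1$ for every $i \in \left\{0,\dots,\tfrac12\dim(X)\right\}$ is immediate.

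With the Betti-number hypothesis in hand, Proposition~\ref{case3} applies directly and its first bullet gives that no point has stabilizer $\Z_k$ for any $k > 2$, which is precisely the claim. I do not expect any genuine obstacle here: all of the real work has already been carried out in Propositions~\ref{implies} and \ref{case3}, and the only point of this lemma is to record that the hypothesis of the latter is supplied by the conclusion of the former under the standing assumptions of Proposition~\ref{case2}.
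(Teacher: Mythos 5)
Your proposal is correct and follows exactly the paper's route: the paper proves this lemma in one line as an immediate consequence of Propositions~\ref{implies} and \ref{case3}, and you have simply spelled out the (straightforward) verification that the two-component minimal-dimension hypothesis gives \eqref{eqdim}, hence $b_{2i}(X)=1$ via Proposition~\ref{implies}(2), so Proposition~\ref{case3} applies.
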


\begin{lemma}\label{r=s=1}
If the assumptions of Proposition~\ref{case2} hold and,
additionally, $\dim(M^{\Z_2})- \dim(Y) = 2$, then
\begin{align*}
\dim(M) = 6 &\quad \mbox{and} \quad \dim(X) = \dim(Y) = 2; \\
H^*(X;\Z) = \Z[u]/u^2
&\quad \mbox{and} \quad
H^*(Y;\Z) = \Z[v]/v^2 ;\\
c(X)=1+ 2u &\quad \mbox{and} \quad c(Y)=1+2v;\\
c\big(N_{M^{\Z_2}}\big)\big|_X = 1+ u &\quad \mbox{and} \quad
c\big(N_{M^{\Z_2}}\big)\big|_Y = 1 + v; \\
c\big(N_X^{M^{\Z_2}}\big) = 1
&\quad \mbox{and} \quad
c\big(N_Y^{M^{\Z_2}}\big) = 1.
\end{align*}
Moreover, if $\phi(Y) > \phi(X)$ and $[\omega]$ is a primitive integral class, then
$\phi(Y) - \phi(X) = 2$.
\end{lemma}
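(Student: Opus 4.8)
The plan is to reduce the statement to an explicit computation of the total Chern classes $c(X)$ and $c(N_X)$ in terms of the generator $u=[\omega|_X]$ and the integer $m=\phi(Y)-\phi(X)$, and then to force $\dim M=6$ using integrality together with an Euler characteristic identity; the shape of the argument parallels the proof of Proposition~\ref{case1}, where Poincar\'e duality and integrality forced $m=1$.

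First I would fix the local picture at $X$ and $Y$. Assume $\phi(X)<\phi(Y)$, rescale $\omega$ so that $[\omega]$ is primitive integral (possible since $H^{2}(M;\R)=\R$ by Proposition~\ref{implies}), and set $m=\phi(Y)-\phi(X)$, a positive integer by Lemma~\ref{uvt}, and $u=[\omega|_X]$, which is primitive integral on $X$ by Lemma~\ref{primitive}. Since the action is not semifree, $\dim X=\dim Y=:2r$ by Lemma~\ref{i<j}; the case $r=0$ forces $\dim M=2$ and hence a semifree action, so $r\ge 1$ and $H^{*}(X;\R)=\R[u]/u^{r+1}$ by Proposition~\ref{implies}. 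By Proposition~\ref{case3} no point has stabilizer $\Z_k$ for $k>2$, so $S^{1}/\Z_2$ acts semifreely on $Q:=M^{\Z_2}$, and by Proposition~\ref{distinctweights} the distinct weights on $N_X$ are exactly $\{1,2\}$; writing $N_X=V_1\oplus V_2$ for the weight splitting, the $\Z_2$-fixed summand is $V_2$, so the hypothesis $\dim Q-\dim Y=2$ gives $\rank_\C V_2=1$, hence $\rank_\C V_1=r$, $\dim M=4r+2$, $N_X^{Q}=V_2$, and $N_Q|_X=V_1$. By Corollary~\ref{same} the weights on $N_Y$ are $-1$ with multiplicity $r$ and $-2$ with multiplicity $1$.

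Next I would compute the Chern classes. Lemma~\ref{r=1} applied to $Q$ gives $c_1(V_2)=c_1(N_X^{Q})=0$, so $e^{S^1}(V_2)=2t$. Lemma~\ref{euler}(1) gives $e^{S^1}(N_X)=2\big(t+\tfrac um\big)^{r+1}$; dividing by $e^{S^1}(V_2)$ and using $(u/m)^{r+1}=0$ in $H^{*}(X;\R)$ yields $e^{S^1}(V_1)=\big(t+\tfrac um\big)^{r+1}/t$, whence by Lemma~\ref{Chern}, $c(V_1)=\big(1+\tfrac um\big)^{r+1}$ and $c^{S^1}(V_1)=\big(1+t+\tfrac um\big)^{r+1}/(1+t)$. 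Feeding $c^{S^1}(N_X)=c^{S^1}(V_1)(1+2t)$ and the weights of $N_Y$ into Lemma~\ref{qchern} and substituting $t=-u/m$ produces
\[
c(X)=\frac{\big(1+\tfrac um\big)^{r}\big(1-\tfrac um\big)\big(1+\tfrac{2u}m\big)}{1-\tfrac{2u}m}\in\R[u]/u^{r+1}.
\]

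Finally I would do the arithmetic. Since $c_1(V_1)=(r+1)\tfrac um$ and $c_1(X)=(r+3)\tfrac um$ are integral and $u$ is primitive integral, $m$ divides both $r+1$ and $r+3$, so $m\in\{1,2\}$. Reading off the top coefficient of the displayed formula gives $c_r(X)=(3^{r}+r)\tfrac{u^r}{m^r}$; integrating, and using that $\chi(X)=r+1$ (the odd Betti numbers of $X$ vanish), gives $V:=\int_X u^r=\tfrac{(r+1)m^{r}}{3^{r}+r}$, which must be a positive integer because $\omega|_X$ is symplectic. For $m=1$ this is $<1$ for every $r\ge1$; for $m=2$ (which forces $r$ odd) it equals $1$ at $r=1$, equals $\tfrac{16}{15}$ at $r=3$, and the estimate $(r+1)2^{r}<3^{r}$ handles all $r\ge5$ — so $r=1$ and $m=2$. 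Substituting back gives $c(X)=1+2u$, $c(N_{M^{\Z_2}})|_X=c(V_1)=1+u$, $c(N_X^{M^{\Z_2}})|_X=c(V_2)=1$, and $H^{*}(X;\Z)=\Z[u]/u^{2}$ since $X$ is a closed connected surface with $\chi(X)=2$; the statements for $Y$ follow by replacing $\phi$ with $-\phi$. The main obstacle is this last paragraph: one needs the explicit rational expression for $c(X)$ to be sharp enough that integrality of its degree-two part (together with that of $c_1(N_X)$) collapses $m$ to a divisor of $2$, after which the Euler characteristic identity — crucially using positivity of the symplectic volume $\int_X u^r$ — must eliminate every $r>1$. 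Assembling that expression correctly (the equivariant Euler class via Lemma~\ref{euler}, then Lemma~\ref{qchern}) is the technical heart; the combinatorics producing the coefficient $3^{r}+r$ is then routine.
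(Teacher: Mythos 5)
Your proposal is correct and follows essentially the same route as the paper: the same chain Lemma~\ref{i<j}, Proposition~\ref{implies}, Lemma~\ref{euler}, Lemma~\ref{r=1}, Lemma~\ref{Chern}, Lemma~\ref{qchern} leading to the identical expression $c(X)=\frac{(1+\frac{u}{m})^{r}(1+\frac{2u}{m})(1-\frac{u}{m})}{1-\frac{2u}{m}}$ with top coefficient $3^{r}+r$, followed by the Euler characteristic identity. The only (cosmetic) divergence is the integrality endgame: you first force $m\in\{1,2\}$ from integrality of $c_1(V_1)$ and $c_1(X)$ and then use positivity and integrality of $\int_X u^{r}=\frac{(r+1)m^{r}}{3^{r}+r}$ to get $r=1$, $m=2$, whereas the paper deduces $\frac{(i+1)^{2}}{3^{i}+i}\in\Z$ from primitivity of $\frac{3^{i}+i}{i+1}\frac{u^{i}}{m^{i}}$ together with integrality of $(i+1)\frac{u^{i}}{m^{i}}$, forcing $i=1$ first and then $m=2$ via primitivity of $u$.
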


\begin{proof} Without loss of generality, we may assume that $\phi(X)<\phi(Y)$.
By Lemma~\ref{i<j},
since the action is not semifree,
$\dim(X) = \dim(Y)$;
hence there
exists  $i\in\N$ such that
\begin{gather}\label{dims}
\dim(X) = \dim(Y) = 2i  \quad \mbox{and}\ \dim(M) = 4i + 2.
\end{gather}
By assumption,
\begin{gather} \label{ndims}
\dim\big(M^{\Z_2}\big) - \dim(Y) = 2.
\end{gather}

By Proposition~\ref{implies}
\begin{gather}\label{hx}
H^*(X;\R) = \R[u]/u^{i+1}, \quad \mbox{where} \ u = [\omega|_X].
\end{gather}
By \eqref{dims}, \eqref{ndims}, and  Lemma~\ref{euler},
\begin{equation}
 e^{S^1}(N_X) = 2 \left(t+\frac{u}{m}\right)^{i+1}, \quad \mbox{where} \ m = \phi(Y) - \phi(X).
\label{ex}
\end{equation}
Here, $N_X$ is the normal bundle to $X$ in $M$. Moreover, by
Lemma~\ref{r=1} and \eqref{ndims},
\begin{equation}\label{e2}
 e^{S^1} \big(N_X^{M^{\Z_2}}\big)= 2t.
\end{equation}
Since $ e^{S^1}(N_X) =  e^{S^1}\big(N_X^{M^{\Z_2}}\big) \, e^{S^1}\big(N_{M^{\Z_2}}\big)\big|_X,$ \eqref{ex}
and \eqref{e2} imply that
\begin{equation}\label{e1}
 e^{S^1}\big(N_{M^{\Z_2}}\big)\big|_X = \frac{1}{t} \left(t+\frac{u}{m}\right)^{i+1} =  t^i +
(i+1) \frac{u}{m} t^{i-1} + \dots + (i+1) \frac{u^i} {m^i} .
\end{equation}
By \eqref{e2}, \eqref{e1} and Lemma~\ref{Chern},
\begin{equation}\label{c12}
 c^{S^1} \big(N_{M^{\Z_2}})\big|_X = \frac{1}{1+t} \left(1+ t+ \frac{u}{m} \right)^{i+1}
\quad\mbox{and}\quad c^{S^1}\big(N_X^{M^{\Z_2}}\big) = 1 + 2 t.
\end{equation}
Therefore, since
$ c^{S^1}(N_X) =  c^{S^1}\big(N_{M^{\Z_2}})\big|_X \, c^{S^1}\big(N_X^{M^{\Z_2}}\big)$,
\begin{equation}\label{cnx}
 c^{S^1}(N_X) =  \left(1+t +
\frac{u}{m}\right )^{i+1} \frac{1+2t}{1+t}.
\end{equation}
By Lemma \ref{qchern}, \eqref{ndims} and \eqref{cnx} imply that,
\begin{equation}\label{cx}
\begin{split}
c(X)  &= \frac{
\left(1 + \frac{u}{m}\right)^i \left(1 + 2 \frac{u}{m}\right)
\left(1 - \frac{u}{m}\right)}
{1 - 2\frac{u}{m} } \\
&=
\frac{
\left(1 + \frac{u}{m}\right)^i }
{1 - 2\frac{u}{m} }  +
\frac{u}{m} \left( 1 + \frac{u}{m} \right)^i \\
& = 1 + (i+3)\frac{u}{m} + \dots + (3^i + i)\frac{u^i}{m^i}.
\end{split}
\end{equation}
By \eqref{hx} and  \eqref{cx}, the Euler characteristic of $X$ is
\begin{equation*}
i+1
= \sum_k (-1)^k\dim
(H^k(X;\R)) =
\int_X c_i(X) =
(3^i+i) \int_X \frac{u^i}{m^i}.
\end{equation*}
Therefore, $\frac{3^i + i}{i+1} \frac{u^i}{m^i} \in H^{2i}(X;\R)$
is a primitive integral class.
On the other hand, since $ e^{S^1}(N_X)$ is an integral class,
\eqref{e1} implies that
$(i+1) \frac{u^i}{m^i}$ is an integral class.
Combined, these two facts imply that
$\frac{(i+1)^2}{3^i+i}$ is an integer.
But this is impossible unless
\begin{equation}\label{i=1}
  i = 1,
\end{equation}
and so $2 \frac{u}{m} \in H^2(M;\R)$ is a primitive integral class.
By multiplying $\omega$ by a constant,
we may also assume that $[\omega]$ is a primitive integral class.
Hence, $u \in H^2(M;\R)$ is a primitive integral class by Lemma~\ref{primitive}.
Therefore,
\begin{equation}
\label{m2}
m = 2.
\end{equation}
Since nearly identical arguments can be applied to $Y$, the  claims
now follow from (\ref{dims}), (\ref{c12}), (\ref{cx}), (\ref{hx}),
(\ref{i=1}), and (\ref{m2}).
\end{proof}

\begin{lemma}\label{normal-bundle}
If the assumptions of Proposition~\ref{case2} hold and, additionally,
$\dim(M^{\Z_2})- \dim(Y) > 2$, then
\begin{align*}
\dim(X)  = \dim(Y) = 2i > 2  &\quad \mbox{and} \quad
\dim(M^{\Z_2}) = \dim(M) - 2 ;\\
H^*(X;\Z)/\torsion = \Z[u]/u^{i + 1} &\quad \mbox{and} \quad
H^*(Y;\Z)/\torsion = \Z[v]/v^{i+ 1}; \\
c(X) = (1+u)^{i+1} &\quad \mbox{and}  \quad
c(Y) = (1+v)^{i+1};  \\
c\big(N_{M^{\Z_2}}\big)\big|_X = 1+ u &\quad \mbox{and} \quad
c\big(N_{M^{\Z_2}}\big)\big|_Y = 1 + v; \\
c\big(N_X^{M^{\Z_2}}\big) = \frac {(1+u)^{i + 1}} {1 + 2u } & \ \
\mbox{and}   \ \
c\big(N_Y^{M^{\Z_2}}\big) = \frac {(1+v)^{i + 1}} {1 + 2v },
\end{align*}
where the last six equations are as elements of $H^*(X;\R)$ or of
$H^*(Y;\R)$.
Moreover, if $\phi(Y) > \phi(X)$ and $[\omega]$ is a primitive integral class, then
$\phi(Y) - \phi(X) = 2$.
\end{lemma}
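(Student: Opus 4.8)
The plan is to mirror the proof of Lemma~\ref{r=s=1}, but in the other branch of the dichotomy of Proposition~\ref{case3}. First I would pin down the structure. Assume without loss of generality that $\phi(X) < \phi(Y)$. By Lemma~\ref{i<j} the action being non-semifree forces $\dim(X) = \dim(Y) = 2i$, so $\dim(M) = 4i+2$; by Lemma~\ref{isotropy-Q} there are no stabilizers $\Z_k$ with $k>2$, so $M$ fails to be semifree only through $\Z_2$, and (as in the proof of Corollary~\ref{same}) $M^{\Z_2}$ is a connected symplectic submanifold containing both $X$ and $Y$. Since our hypothesis excludes $\dim(M^{\Z_2})-\dim(Y)=2$, Proposition~\ref{case3} forces $\dim(M^{\Z_2}) = \dim(M)-2 = 4i$; hence $\dim(M^{\Z_2})-\dim(Y) = 2i > 2$, so $i \geq 2$ and $\dim(X) > 2$. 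Consequently $N_{M^{\Z_2}}$ has complex rank $1$, and the splitting $N_X = N_X^{M^{\Z_2}} \oplus N_{M^{\Z_2}}|_X$ has complex ranks $i$ and $1$. By Proposition~\ref{distinctweights} combined with the absence of $\Z_k$-stabilizers for $k>2$, the distinct weights on $N_X$ are $\{1,2\}$; since $\Z_2$ acts nontrivially on $N_{M^{\Z_2}}|_X$ and trivially on $N_X^{M^{\Z_2}}$, the line bundle $N_{M^{\Z_2}}|_X$ carries weight $1$ and $N_X^{M^{\Z_2}}$ is the weight-$2$ summand of $N_X$; by Corollary~\ref{same} the weights of $N_Y$ are $\{-1\}$ together with $i$ copies of $-2$.

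Next I would compute the equivariant classes over $X$. Put $m = \phi(Y)-\phi(X)$ and $w = [\omega|_X]/m$, so that $H^*_{S^1}(X;\R) = \R[w,t]/w^{i+1}$ by Proposition~\ref{implies}. Lemma~\ref{euler} gives $e^{S^1}(N_X) = 2^i(t+w)^{i+1}$ (the product of the weights on $N_X$ is $2^i$). Since $\dim(M^{\Z_2})-\dim(Y) = 2i > 2$ and $b_2(X)=1$, Lemma~\ref{1stChern} applied to $Q = M^{\Z_2}$ gives $c_1(N_{M^{\Z_2}}|_X) = 2w$; hence by Lemma~\ref{Chern} $e^{S^1}(N_{M^{\Z_2}})|_X = t+2w$, and so $e^{S^1}(N_X^{M^{\Z_2}}) = 2^i(t+w)^{i+1}/(t+2w)$, which is a genuine polynomial modulo $w^{i+1}$ because $(t+w)^{i+1} \equiv (-w)^{i+1} \equiv 0$ modulo $(t+2w)$ and $w^{i+1}$. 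Passing from Euler classes to equivariant Chern classes via the weight substitutions of Lemma~\ref{Chern} ($t\mapsto t+1$ for weight $1$, $t\mapsto t+\tfrac12$ for weight $2$) yields $c^{S^1}(N_{M^{\Z_2}})|_X = 1+t+2w$ and $c^{S^1}(N_X^{M^{\Z_2}}) = (1+2t+2w)^{i+1}/(1+2t+4w)$, hence $c^{S^1}(N_X) = (1+t+2w)(1+2t+2w)^{i+1}/(1+2t+4w)$.

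Now I would apply Lemma~\ref{qchern}: since the weights of $N_Y$ are as above, $c(X)$ is represented by $(1-t)(1-2t)^i/c^{S^1}(N_X)$ in $H^*_{S^1}(X;\R)/([\omega|_X]+mt)$, i.e.\ under the substitution $t \mapsto -w$. That substitution annihilates the factor $(1+2t+2w)^{i+1}$ and collapses everything to $c(X) = (1+2w)^{i+1}$. Setting $u := 2w = c_1(N_{M^{\Z_2}}|_X)$ — an integral class — this says $c(X) = (1+u)^{i+1}$, while $c(N_{M^{\Z_2}})|_X = 1+u$ and $c(N_X^{M^{\Z_2}}) = c^{S^1}(N_X^{M^{\Z_2}})|_{t=0} = (1+u)^{i+1}/(1+2u)$. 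Because $H^*(X;\R) = \R[u]/u^{i+1}$ we have $\chi(X) = i+1$, and comparing this with $\int_X c_i(X) = (i+1)\int_X u^i$ forces $\int_X u^i = 1$; thus $u^i$ is a primitive generator of $H^{2i}(X;\Z)/\torsion \cong \Z$, which (since $b_2(X)=1$) forces $u$ — and, by Poincar\'e duality, each $u^k$ with $0\le k\le i$ — to be primitive, so $H^*(X;\Z)/\torsion = \Z[u]/u^{i+1}$. The identical argument, applied to $-\phi$ so that $Y$ becomes the minimal component, yields the corresponding statements for $Y$ with $v := 2[\omega|_Y]/m$. Finally, if $\phi(Y)>\phi(X)$ and $[\omega]$ is primitive integral, then $2\lambda_Y = \dim(M)-\dim(Y) = 2i+2 > 2$, so Lemma~\ref{primitive} shows $[\omega|_X]$ is primitive integral; since $u = (2/m)[\omega|_X]$ is a positive multiple of $[\omega|_X]$ and is itself primitive integral, necessarily $u = [\omega|_X]$ and $m = \phi(Y)-\phi(X) = 2$.

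I expect the main obstacle to be this last integrality bookkeeping. Unlike in Lemma~\ref{r=s=1}, the relevant set of integers $\{\phi(X)-\phi(Y)\}$ turns out to be $\{-2\}$, which is not relatively prime, so Proposition~\ref{implies}(3) is unavailable; the structure of the ring $H^*(X;\Z)/\torsion$ and the primitivity of $u$ must be extracted directly from the single identity $\int_X u^i = 1$ together with $b_2(X)=1$. A secondary, purely algebraic care point is confirming that $e^{S^1}(N_X^{M^{\Z_2}})$ is polynomial and that the seemingly $i$-dependent rational expression for $c(X)$ really collapses, for every $i$, to $(1+u)^{i+1}$; the substitution $t\mapsto -w$ is what makes this transparent.
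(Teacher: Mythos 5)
Your proposal is correct and takes essentially the same route as the paper's proof: the same chain of results (Lemma~\ref{i<j}, Proposition~\ref{case3}, Proposition~\ref{implies}, Lemmas~\ref{euler}, \ref{1stChern}, \ref{Chern}, and \ref{qchern}) gives $e^{S^1}(N_X)$, $e^{S^1}(N_{M^{\Z_2}})|_X$, hence $c^{S^1}(N_X^{M^{\Z_2}})$ and $c(X)=(1+2u/m)^{i+1}$, followed by the identical Euler-characteristic/primitivity computation. The only difference is bookkeeping: you set $u := c_1\big(N_{M^{\Z_2}}\big)\big|_X = 2[\omega|_X]/m$ at the outset (so it is integral by fiat) and deduce $m=2$ at the end from Lemma~\ref{primitive} and positivity, whereas the paper works with $u=[\omega|_X]$, normalizes $[\omega]$ to be primitive, and extracts $m=2$ from the integrality of $c^{S^1}\big(N_{M^{\Z_2}}\big)\big|_X$ together with the primitivity of $2^iu^i/m^i$ — the content is the same.
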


\begin{proof}
Without loss of generality, we may assume that $\phi(X) < \phi(Y)$.
By Lemma~\ref{i<j}, since the action is not semifree, $\dim(X) =
\dim(Y)$; hence there
exists $i\in\N$ such that
\begin{gather}\label{bdims}
\dim(X) = \dim(Y) = 2i  \quad \mbox{and}\ \dim(M) = 4i + 2.
\end{gather}
Since $\dim\big(M^{\Z_2}\big)  - \dim(Y) > 2$,
Proposition~\ref{case3} implies that
\begin{equation} \label{bdims2}
\dim (M)-\dim \big(M^{\Z_2}\big)=2.
\end{equation}

By Proposition~\ref{implies},
\begin{gather}\label{bhx}
H^*(X;\R) = \R[u]/u^{i+1}, \quad\mbox{where}\,\, u = [\omega|_X].
\end{gather}
By \eqref{bdims} and \eqref{bdims2} and Lemma \ref{euler},
\begin{gather}
\label{bex}
 e^{S^1}(N_X) = 2^i\left(t + \frac{u}{m}\right)^{i+1},
\quad \mbox{where} \  m = \phi(Y) - \phi(X).
\end{gather}
Here, $N_X$ is the normal bundle to $X$ in $M$. Moreover, since
$\dim\big(M^{\Z_2}\big) - \dim(Y) > 2$, Lemma~\ref{1stChern} implies
that
\begin{equation}\label{be2}
 e^{S^1}\big( N_{M^{\Z_2}} \big)\big|_X =  t+ 2 \frac{u}{m}.
\end{equation}
Since $e^{S^1}(N_X) =
e^{S^1}\big( N_X^{M^{\Z_2}}\big) \,
e^{S^1}\big(N_{M^{\Z_2}}\big)\big|_X
$,
\eqref{bex} and \eqref{be2} imply
that
\begin{equation}\label{be1}
 e^{S^1}\big(N_X^{M^{\Z_2}}\big) = \frac{\left(2 t + 2 \frac{u}{m} \right)^{i+1}}{2t + 4
\frac{u}{m}}.
\end{equation}
By Lemma~\ref{Chern}, \eqref{be2} and \eqref{be1} imply  that
\begin{equation}\label{be2'}
 c^{S^1}\big(N_{M^{\Z_2}}\big) \big|_X= 1 + t+ 2 \frac{u}{m}
\quad \mbox{and} \quad
c^{S^1}\big(N_X^{M^{\Z_2}}\big)= \frac{(1+2t+2 \frac{u}{m})^{i+1}}{1+ 2t+ 4
\frac{u}{m}}.
\end{equation}
Therefore, since
$ c^{S^1}(N_X) =  c^{S^1}\big(N_X^{M^{\Z_2}}\big)  \,
c^{S^1}\big(N_{M^{\Z_2}}\big)\big|_X$,
\begin{equation}\label{bchern}
 c^{S^1}(N_X)
= \frac{(1+2t+2 \frac{u}{m})^{i+1}}{1+ 2t+ 4 \frac{u}{m}} (1 + t + 2
\frac{u}{m}).
\end{equation}
By  Lemma~\ref{qchern}, \eqref{bdims}, \eqref{bdims2}, and \eqref{bchern} imply that
\begin{equation}\label{C(X)}
c(X)=\left(1+2 \frac{u}{m} \right)^{i+1} = 1 + (i+1) 2 \frac{u}{m} + \dots +
(i+1) \left(2\frac{u}{m} \right) ^i.
\end{equation}
By \eqref{bhx} and \eqref{C(X)}  the Euler characteristic of $X$ is
$$ (i+1) 2^i \int_X \frac{u^i}{m^i} = \int_X c_i(X)=\sum_k
(-1)^k\dim H^k(X)=i+1.$$ So $ 2^i \frac{u^i}{m^i} \in H^{2i}(X, \Z)$ is a
primitive integral class.
By multiplying $\omega$ by a constant, we may assume that $[\omega]$
is a primitive integral class.
Hence,
$u$ is a
primitive integral class by Lemma~\ref{primitive}. On the one hand,
since $c^{S^1}\big(N_{M^{\Z_2}} \big) \big|_X$ is an integral class,
\eqref{be2'} implies that  $\frac{2}{m} \in \Z$. On
the other hand, since $u^i$ is an integral class and
$\frac{2^i}{m^i} u^i$ is a primitive integral class,
$\frac{m^i}{2^i} \in \Z$. Together, these imply that
\begin{equation}
\label{bm} m = 2 \quad \mbox{and}\quad u^i \,\,\mbox{is a primitive
integral class}.
\end{equation}
Since nearly identical arguments can be applied to $Y$,
claim now follows from
(\ref{bhx}),
(\ref{be2'}),
(\ref{C(X)}),
and
(\ref{bm}).
\end{proof}

\begin{lemma}\label{Q}
Let the circle act on a compact symplectic manifold $(Q, \omega)$
with moment map $\phi \colon Q \to \R$. Assume that there are
exactly two fixed components, $X$ and $Y$. Assume that
$\dim(X)=\dim(Y)=2i$ and $\dim(Q) = 4i$ for some $i > 1$. Let $\F$
be $\R$ or $\Z_p$, and when $\F=\Z_p$, we assume that the action is
semifree. Let
$$e^{S^1}(N_Y^Q) = \Lambda_Y \left( t^i  + \mu t^{i-1} + \lot \right)
\in H^{2i}_{S^1}(Y;\F),$$
where $N_Y^Q$ is the normal bundle to $Y$ in $Q$.
Assume also that there exist classes $\ut$ and $\mt$ in $H^2_{S^1}(Q;\F)$
such that
\begin{enumerate}
\item $\ut|_x = 0$ for all $x \in X$;
\item $\ut|_y \neq 0$ for all  $y \in Y$;
\item $\mt|_Y = \mu$; and
\item $\mt|_x \neq -it$ for all $x \in X$.
\end{enumerate}
Then $H^{2k+1}(X;\F) = 0$ for all $k$.
\end{lemma}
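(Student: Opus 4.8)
The plan is to argue by contradiction: assume that $X$ carries a nonzero odd cohomology class over $\F$, extend the one of lowest degree to an equivariant class on $Q$, and use the data attached to $\ut$, $\mt$ and $e^{S^1}(N_Y^Q)$ to force it to vanish. Normalize so that $\phi(X) < \phi(Y)$; then $\lambda_X = 0$ and $2\lambda_Y = \dim(Q) - \dim(Y) = 2i$. Under the standing hypotheses ($\F = \R$, or $\F = \Z_p$ with the action semifree), $\phi$ is equivariantly perfect over $\F$, the sequences \eqref{short} are short exact in every degree, the restriction $H^*_{S^1}(Q;\F) \to H^*_{S^1}(X;\F)$ is onto, and $H^j(Q;\F) = H^j(X;\F) \oplus H^{j-2i}(Y;\F)$ for all $j$; feeding the last identity into Poincar\'e duality on $Q$ and on $X$ gives $b_k(X;\F) = b_k(Y;\F)$ for all $k$. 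In particular, every odd degree carrying cohomology of $X$ is at most $2i-1 < 2\lambda_Y$, so such classes extend equivariantly.

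Suppose then that $H^{2k+1}(X;\F) \neq 0$ for some $k$; let $d$ be the least odd integer with $H^d(X;\F) \neq 0$ and pick $0 \neq \alpha \in H^d(X;\F)$. By the previous paragraph $H^{d'}(X;\F) = H^{d'}(Y;\F) = 0$ for all odd $d' < d$, and there is $\at \in H^d_{S^1}(Q;\F)$ with $\at|_X = \alpha$. Minimality of $d$ kills the $t$-positive part of $\at|_Y$, so $\at|_Y =: \alpha_Y$ lies in $H^d(Y;\F)$; in particular $\at|_x = 0$ for $x \in X$ and $\at|_y = 0$ for $y \in Y$. Write $\ut|_Y = v + ct$, where $c = \ut|_y \neq 0$.

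The key step is to multiply by $\ut^{\,i}$. Since $\ut|_X \in H^2(X;\F)$ (this is exactly what $\ut|_x = 0$ means) and $d + 2i > \dim(X)$, we get $(\at\,\ut^{\,i})|_X = \alpha\,(\ut|_X)^i = 0$; hence $(\at\,\ut^{\,i})|_Y = \alpha_Y\,(v+ct)^i$ lies in the ideal generated by $e^{S^1}(N_Y^Q) = \Lambda_Y\bigl(t^i + \mu t^{i-1} + \lot\bigr)$. (One sees this by pushing $\at\,\ut^{\,i}$ through the Kirwan map to the reduced space $Q_c = \P(N_Y^Q) = \P(N_X)$, where $H^*(Q_c;\F) \cong H^*_{S^1}(Y;\F)/\bigl(e^{S^1}(N_Y^Q)\bigr)$ by Proposition~\ref{TW} while the image of $\at\,\ut^{\,i}$ is already zero since its restriction to $X$ vanishes; alternatively one invokes Proposition~\ref{multiple-Euler} at $Y$.) Matching the coefficients of $t^i$ and $t^{i-1}$ in $\alpha_Y(v+ct)^i = \rho\,e^{S^1}(N_Y^Q)$ — with $\rho \in H^d_{S^1}(Y;\F) = H^d(Y;\F)$, again $t$-free — forces $\rho = \tfrac{c^i}{\Lambda_Y}\,\alpha_Y$ and then $\alpha_Y\,(iv - c\mu) = 0$ in $H^{d+2}(Y;\F)$. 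The class $iv - c\mu$ is globally defined: it is the restriction to $Y$ of $\psi := i\,\ut - c\,\mt - ic\,t \in H^2_{S^1}(Q;\F)$, for which $\psi|_Y = iv - c\mu$ (so $\psi|_y = 0$), while the $t$-coefficient of $\psi|_X$ equals $-c\,(i + d')$, where $d'$ is the $t$-coefficient of $\mt|_X$; since $\mt|_x \neq -it$ we have $d' \neq -i$, so $\psi|_x \neq 0$ for $x \in X$. Thus $\psi$ is for $X$ what $\ut$ is for $Y$, and repeating the construction with $\ut$ replaced by $\psi$ (and $-\phi$ in place of $\phi$) produces, symmetrically, a relation $\alpha\,\zeta_X = 0$ in $H^{d+2}(X;\F)$ for an explicit degree-two class $\zeta_X$.

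The main obstacle is closing the loop. The construction above produces — ping-ponging between $X$ and $Y$ — a chain of vanishing relations of the form ``(odd class) $\cup$ (explicit degree-two class) $= 0$'', and one must show that these, together with the Poincar\'e-dual pairing between the degrees $d$ and $2i - d$, are incompatible with $\alpha \neq 0$. I expect the argument to finish by induction on $i$: peeling off the leading factor of $e^{S^1}(N_Y^Q)$ (and of $e^{S^1}(N_X)$) should realize $Q_c$, or a suitable isotropy submanifold of it, as a case of the lemma with smaller $i$, the hypothesis $i > 1$ being what leaves room for the induction. Equivalently, one would like to show that $\ut$, $\mt$ and $t$ generate enough of $H^*_{S^1}(Q;\F)$ that the odd-degree class $\at$, having degree $d < 2i$, must be divisible by $t$ — whence, since $\at|_X = \alpha$ is $t$-free and nonzero, $\at = 0$, the desired contradiction. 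Making one of these routes precise, and treating the characteristic-two case (where $\at^2$ need not vanish) separately, is where the real work lies.
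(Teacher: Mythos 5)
Your computation up to the relation $\alpha_Y\,(iv - c\mu) = 0$ in $H^{d+2}(Y;\F)$ is correct (the reduction $b_k(X)=b_k(Y)$, the $t$-freeness of $\at|_Y$ and of the multiplier $\rho$ via minimality of $d$, and the coefficient matching all check out), but at that point the proof stops: the relation is vacuous when $\alpha_Y=\at|_Y=0$, which nothing rules out, and even when $\alpha_Y\neq 0$ it is merely a cup-product relation, not a contradiction. The routes you sketch for ``closing the loop'' are not carried out and are doubtful as stated: the symmetric construction at $X$ would need analogues of hypotheses (3)--(4) for the subleading coefficient of $e^{S^1}(N_X^Q)$, which are not part of the data, and the induction on $i$ / divisibility-by-$t$ ideas are speculative. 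So there is a genuine gap, and it comes from the choice of what to multiply $\at$ by: multiplying by $\ut^{\,i}$ throws away the information that $\alpha\neq 0$, since $(\at\,\ut^{\,i})|_X=\alpha u^i$ dies for degree reasons whether or not $\alpha$ vanishes.

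The missing idea (and the paper's route) is to pair $\alpha$ with a Poincar\'e-dual \emph{odd} class instead: choose $\beta\in H^{2i-2k-1}(X;\F)$ with $\alpha\cup\beta=u^i$, lift both to $\at,\bt$ on $Q$, and compare $\at\cup\bt$ with $\ut^{\,i}$, so that $(\at\cup\bt)|_Y=\ut^{\,i}|_Y+c\,e^{S^1}(N_Y^Q)$ for a constant $c$. Because every $t$-coefficient of $\at|_Y$ and $\bt|_Y$ is a positive odd-degree class, the left side has no $t^i$-term, which forces $c\,\Lambda_Y=-m^i\neq 0$; the $t^{i-1}$-term then identifies the degree-two class $m^{i-1}(iv-m\mu)$ with a product $a_1\cup b_1$ of two degree-one classes on $Y$. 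Lifting $a_1,b_1$ to degree-one equivariant classes on $Q$ (possible since $1<2i$), their product restricts to $0$ at every point of $X$ because $H^1(\CP^\infty;\F)=0$; on the other hand, since the restriction $H^2_{S^1}(Q;\F)\to H^2_{S^1}(Y;\F)$ is injective in degree $2<2i$, that product must equal $m^{i-1}(i\ut-imt-m\mt)$, whose restriction to a point of $X$ is nonzero precisely by hypothesis (4). That is the contradiction; the whole point is that the obstruction class is exhibited as a product of two odd classes coming from the Poincar\'e pairing with the nonzero $\alpha$, rather than appearing multiplied by the possibly-zero class $\alpha_Y$ as in your version. (Your worry about characteristic two is moot in this argument, since no odd class is ever squared.)
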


\begin{proof}

Assume on the contrary that there exists a non-zero class $\alpha \in
H^{2k+1}(X;\F)$ for some $k$.
By assumptions (1) and (2), there exist $u$ and $v$ in $H^2(X;\F)$
and a non-zero $m \in \F$ such that
\begin{equation}\label{utt}
\ut|_X = u \quad \mbox{and} \quad \ut|_Y = v + mt.
\end{equation}
Since $\F$ is a field, Poincar\'e duality implies that there exists
$\beta \in H^{2i - 2k- 1}(X;\F)$ such that $\alpha \cup \beta
 = u^i$.
Since $2k+1$ and $2i-2k-1$ are both smaller than $2\lambda_Y =
\dim(Q) - \dim(Y)$, there exist classes $\at \in
H^{2k+1}_{S^1}(Q;\F)$ and $\bt \in H^{2i - 2k - 1}_{S^1}(Q; \F)$
such that $\at|_X = \alpha$ and $\bt|_X = \beta$.

Since $H^{*}_{S^1}(Y;\F) =
H^*(Y; \F)[t]$,
we can write $\at|_Y =
\sum a_{2j+1} t^{k -j}$ and $\bt|_Y = \sum b_{2j+1} t^{i - k - j
-1}$, where $a_{2j+1}$ and $b_{2j+1}$ lie in $H^{2j+1}(Y;\F)$
for all $j$. Moreover, since $1 < 2i$,
there exist classes $\widetilde a_1\in
H^1_{S^1}(Q; \F)$ and $\widetilde b_1\in H^1_{S^1}(Q; \F)$ such that
$\widetilde a_1|_Y=a_1$ and $\widetilde b_1|_Y=b_1$. Finally, since
$H^1(\CP^\infty; \F)=0$  for all
$x\in X$, $\widetilde a_1|_x = \widetilde b_1|_x = 0$. Therefore,
\begin{gather}
\label{left1}
\big( \at \cup \bt \big)\big|_Y = \left( a_1 \cup b_1 \right)t^{i-1} +
\lot,  \\
\label{left2}
\big( \widetilde a_1 \cup  \widetilde b_1 \big) \big|_Y = a_1 \cup b_1,
\quad \mbox{and} \\
\label{left3}
\big(\widetilde a_1 \cup \widetilde b_1 \big)\big |_x = 0.
\end{gather}

Since the action is semifree when $\F=\Z_p$, we have  the short
exact sequence (\ref{short}) for both $\F=\R$ and $\F=\Z_p$. Since
$\dim(Q) - \dim(Y) = 2i$ and $ \ut^i\big|_X = \big( \at \cup \bt
\big)\big|_X$, by (\ref{short}), there exists $c \in \F$ so that
\begin{multline}
\label{right}
\big( \at \cup \bt \big)\big|_Y  =  \ut^i\big|_Y + c\, e^{S^1}(N_Y^Q) \\
= \left( m^i t^i  + i v m^{i-1} t^{i - 1}\right) +  \Lambda_Y \left(
c t^i + c \mu t^{i - 1} \right) + \lot .
\end{multline}
Comparing the highest order terms of \eqref{left1} and
\eqref{right}, we see that $$c\,
 \Lambda_Y = - m^i.$$
Hence, by comparing the next highest order terms, we see that
$$ a_1 \cup b_1 = m^{i-1} \left (iv - m \mu \right).$$
By \eqref{utt}, \eqref{left2}, and assumption (3), this implies that
$$\big( \widetilde a_1 \cup \widetilde b_1 \big) \big|_Y  =
m^{i-1} \big(i \ut - i mt - m \mt \big) \big|_Y. $$
Since $2 < \dim(Q) - \dim(X) = 2i$, this implies that
\begin{equation}\label{right2}
 \widetilde a_1 \cup \widetilde b_1   =
m^{i-1} \left(i \ut - i mt - m \mt \right) .
\end{equation}
But by \eqref{utt} and assumption  (4),
\begin{equation}
\label{right3}
m^{i-1} \big(i \ut - i mt - m \mt \big) \big|_x \neq 0 \quad
\forall \ x \in X.
\end{equation}
Clearly,
\eqref{left3}, \eqref{right2}, and \eqref{right3} give a contradiction.
\end{proof}

\begin{lemma}\label{torsion-Grass}
If the assumptions of
Proposition~\ref{case2} hold,
then $H^*(M^{S^1};\Z)$ is torsion-free.
\end{lemma}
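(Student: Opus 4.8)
The plan is to apply Lemma~\ref{Q} to the isotropy submanifold $M^{\Z_2}$. Since $M^{S^1}=X\sqcup Y$, the assertion is that $H^*(X;\Z)$ and $H^*(Y;\Z)$ are torsion-free, and by Lemma~\ref{torsion} it suffices to show $H^{2k+1}(X;\Z_p)=H^{2k+1}(Y;\Z_p)=0$ for all $k$ and all primes $p$. By Lemma~\ref{isotropy-Q} and Proposition~\ref{case3}, either $\dim(M^{\Z_2})-\dim(Y)=2$ -- in which case Lemma~\ref{r=s=1} gives $H^*(X;\Z)=\Z[u]/u^2$ and $H^*(Y;\Z)=\Z[v]/v^2$, and we are done -- or $\dim(M^{\Z_2})-\dim(Y)>2$, so that Lemma~\ref{normal-bundle} applies: $\dim(X)=\dim(Y)=2i$ with $i>1$, $\dim(M^{\Z_2})=\dim(M)-2=4i$, and, after rescaling $[\omega]$ to be primitive integral (so that $\phi(Y)-\phi(X)=2$), the normal bundle of $Y$ in $M^{\Z_2}$ has rank $i$ and first Chern class $(i-1)v$, where $v=[\omega|_Y]$.

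Now set $Q=M^{\Z_2}$; this is a connected compact symplectic submanifold (as in the proof of Corollary~\ref{same}), and since no point of $M$ has stabilizer $\Z_k$ for $k>2$ (Lemma~\ref{isotropy-Q}), the induced action of the quotient circle $T=S^1/\Z_2$ on $Q$ is semifree with exactly the two fixed components $X$ and $Y$; its moment map $\psi:=\frac{1}{2}\phi|_Q$ satisfies $\psi(Y)-\psi(X)=1$, and with respect to $T$ the normal bundle $N_Y^Q$ of $Y$ in $Q$ has all weights equal to $-1$. I would apply Lemma~\ref{Q} to $(Q,\psi)$ with $\F=\Z_p$ (note $i>1$ and $\dim Q=4i$). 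By Lemma~\ref{Chern}, $e^{T}(N_Y^Q)=(-1)^i\bigl(t^i-c_1(N_Y^Q)\,t^{i-1}+\lot\bigr)$ with $t$ the generator of $H^2(BT)$, so in the notation of Lemma~\ref{Q} one has $\Lambda_Y=(-1)^i$ -- a unit modulo every $p$, which is exactly why one must pass to $T$ -- and $\mu=-c_1(N_Y^Q)=(1-i)v$. By Lemma~\ref{uvt} applied to the $T$-action on $Q$ with reference component $X$ there is $\ut\in H^2_{T}(Q;\Z_p)$ with $\ut|_x=0$ for all $x\in X$ and $\ut|_Y=v-t$, giving hypotheses (1) and (2); setting $\mt=(1-i)(\ut+t)$ gives $\mt|_Y=(1-i)(v-t+t)=(1-i)v=\mu$ and $\mt|_x=(1-i)t$ for all $x\in X$. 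Since $(1-i)\neq -i$ in $\Z_p$, this verifies (3) and (4), so Lemma~\ref{Q} yields $H^{2k+1}(X;\Z_p)=0$ for all $k$. Replacing $\phi$ by $-\phi$ interchanges $X$ and $Y$ without disturbing any hypothesis, so likewise $H^{2k+1}(Y;\Z_p)=0$ for all $k$; as $p$ and $k$ were arbitrary, Lemma~\ref{torsion} finishes the proof.

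The main obstacle is verifying hypothesis (4) of Lemma~\ref{Q} -- that $\mt|_x$ is never $-it$ -- \emph{uniformly over all primes}. Two ingredients make it work: passing to the quotient circle $T=S^1/\Z_2$, for which the action on $M^{\Z_2}$ is genuinely semifree and $N_Y^Q$ acquires invertible Euler-class leading coefficient (so that Lemma~\ref{Q} is usable with $\Z_2$-coefficients and the constant $c$ in its proof is well defined); and the precise normalization $\phi(Y)-\phi(X)=2$ from Lemma~\ref{normal-bundle}, which, together with $c_1(N_Y^Q)=(i-1)v$, forces $\mt|_x=(1-i)t$ -- differing from $-it$ by exactly the generator $t$, hence nonzero modulo any prime. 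Assembling the $\Z_p$-classes $\ut$ and $\mt$ with the stated restrictions (taking care, as elsewhere in \S\ref{background}, with the $\Z_p$-integral lifts) and checking the weight and Chern-class bookkeeping against Lemma~\ref{Chern}, Lemma~\ref{uvt}, and Lemma~\ref{normal-bundle} is then routine.
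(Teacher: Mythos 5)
Your proposal is correct and follows essentially the same route as the paper: reduce to the case $\dim(M^{\Z_2})-\dim(Y)>2$ via Lemmas~\ref{isotropy-Q}, \ref{r=s=1}, and \ref{normal-bundle}, pass to the semifree $S^1/\Z_2$ action on $M^{\Z_2}$ with $\phi(Y)-\phi(X)=2$, compute $e(N_Y^{M^{\Z_2}})$ to get $\mu=(1-i)v$, and feed $\ut$ and $\mt$ (with $\mt|_x=(1-i)t\neq -it$ mod every prime) into Lemma~\ref{Q}, concluding with Lemma~\ref{torsion}. The only cosmetic differences are that you define $\mt=(1-i)(\ut+t)$ directly rather than as the unique extension of $\mu$, and you state the $Y$-case explicitly by replacing $\phi$ with $-\phi$.
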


\begin{proof}
By Lemma~\ref{isotropy-Q}, no
point in $M$ has stabilizer $\Z_k$ for any $k > 2$.
By Lemma~\ref{r=s=1}, the
cohomology $H^*(M^{S^1};\Z)$ is torsion-free if
$\dim \big(M^{\Z_2}\big) - \dim(Y)
= 2$,  and so we may assume that $\dim\big(M^{\Z_2}\big) - \dim(Y) > 2$. By
Lemma~\ref{normal-bundle},  $\dim(X)=\dim(Y)=2i$
and $\dim(M^{\Z_2}) = 4i$ for some $i
> 1$.

By Lemma \ref{normal-bundle}, $H^2(M;\R) = \R$.
Therefore, by multiplying $\omega$ by a constant,
we may assume that $[\omega]$ is a primitive integral class.
The induced effective $S^1 = S^1/\Z_2$ action on $M^{\Z_2}$ is semifree,
and the moment map for this action is $\phi'=\phi/2$.  By Lemma
\ref{normal-bundle}, $\phi(Y)-\phi(X)=2$, and so
$\phi'(Y)-\phi'(X)=1$. Hence, by Lemma ~\ref{uvt}, there exists an integral class $\ut
\in H_{S^1}^2(M^{\Z_2};\R)$, such that
\begin{equation}
\label{ut3} \ut\big|_X = [\omega|_X] \in H_{S^1}^2(X;\R) \quad \mbox{and}
\quad \ut\big|_Y = [\omega|_Y] - t \in H_{S^1}^2(Y;\R).
\end{equation}
In particular, for any prime $p$, there exists $\ut \in
H^2_{S^1}\big(M^{\Z_2}; \Z_p\big)$ such that $\ut|_x = 0$ for all $x
\in X$ and $\ut|_y \neq 0$ for all $y \in Y$.

By Lemmas~\ref{normal-bundle} and \ref{Chern}, the equivariant Euler
class of the normal bundle of $Y$ in $M^{\Z_2}$ (for the semifree
$S^1/\Z_2$ action on $M^{\Z_2}$) is given by
\begin{equation*}
\begin{split}
e^{S^1}\big({N_Y^{M^{\Z_2}}}\big) & =\frac{(-t+v)^{i+1}}{-t+2v} \\
& = (-1)^i \frac{(t-v)^{i+1}}{t}
 \left(1+\frac{2v}{t}+ \lot \right ) \\
&=(-1)^i\left (t^i + (1-i)v t^{i-1} +  \lot \right)
\ \in H_{S^1}^{2i}\big(M^{\Z_2};\R\big),
\end{split}
\end{equation*}
where $v = [\omega|_Y] \in H^2(Y;\R)$.  Moreover, by \eqref{ut3},
\begin{align}
(1 - i) \left(\ut +t \right) \big|_Y &= (1-i) v, \quad \mbox{and}\\
(1 - i) \left(\ut +t \right) \big|_x &= (1-i) t \quad \forall \ x \in X.
\end{align}

Finally,  fix any
prime $p$, and write
$$e^{S_1}(N_Y^{M^{\Z_2}}) = (-1)^i \left( t^i + \mu t^{i-1} + \lot \right) \in H^{2i}_{S^1}(Y;\Z_p),$$
where $\mu \in H^2(Y;\Z_p)$. Since $2 < \dim(M^{\Z_2}) - \dim(X) = 2i$,
there exists  a unique $\mt \in H^2_{S^1}(M^{\Z_2};\Z_p)$ such that $\mt|_Y
= \mu$. By the preceding paragraph, $\mt|_x = (1-i)t \neq -it \in
H^2(\CP^\infty;\Z_p)$. By Lemma~\ref{Q}, this implies that
 $H^{2k+1}(X; \Z_p)=0$ for all $k$ and all primes $p$.
By Lemma~\ref{torsion},
this proves the claim.
\end{proof}

\begin{remark}
In fact, we can use Lemma~\ref{betti0} to give
a simpler proof that
$H^{2k+1}(M^{S^1}; \Z_k)=0$ for all $k > 2$.
\end{remark}

\begin{appendix}
\section{Possible stabilizer subgroups}

The goal of this appendix is to prove the following proposition.

\begin{proposition}\label{general}
Let the circle act effectively on a  compact  symplectic manifold $(M,
\omega)$  with moment map $\phi \colon M \to \R$.
Assume that $M^{S^1}$ has exactly
two components.
Then no point has stabilizer $\Z_k$ for any $k > 6$.
\end{proposition}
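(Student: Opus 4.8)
The plan is to reduce the statement to a purely combinatorial fact about the weights at $X$ and then run it for three carefully chosen moduli. First I would dispose of the semifree case, where by definition no point has stabilizer $\Z_k$ for any $k\geq 2$. Assuming henceforth that the action is not semifree and relabelling so that $\phi(X)<\phi(Y)$, Proposition~\ref{distinctweights} provides an integer $N$ such that the distinct weights of the isotropy representation on $N_X$ are exactly $\{1,\dots,N\}$, and it will suffice to prove $N\leq 6$. Indeed, if some point $p$ has stabilizer $\Z_k$, then $p\in M^{\Z_k}\smallsetminus M^{S^1}$, so, as observed in the proof of Corollary~\ref{same}, $M^{\Z_k}$ is connected and contains $X$; near $X$ it coincides with $\bigoplus_{k\mid j}V_j$, where $V_j\subset N_X$ is the weight-$j$ subbundle, and since this strictly contains $X$ some weight $j\leq N$ is divisible by $k$, whence $k\leq N$. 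Thus $k>6$ forces $N\geq 7$, and it is enough to rule this out.

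For the combinatorial input I would use Corollary~\ref{same} and Lemma~\ref{modulo}. Write $N_X=\bigoplus_{j=1}^{N}V_j$ with $r_j=\rank_\C V_j\geq 1$; by Corollary~\ref{same} the weight-$(-j)$ subbundle of $N_Y$ also has complex rank $r_j$, and by Lemma~\ref{i<j} we have $\dim(X)=\dim(Y)$. For each $k\in\{2,\dots,N\}$ the weight $k$ occurs on $N_X$, so some point has stabilizer $\Z_k$, and hence (again by the argument in the proof of Corollary~\ref{same}) $M^{\Z_k}$ is connected and contains both $X$ and $Y$. Applying Lemma~\ref{modulo} to a point of $X$ and a point of $Y$, which therefore lie on the same component of $M^{\Z_k}$, the two multisets of weights agree modulo $k$; since $\dim(X)=\dim(Y)$ the zero weights contribute equally, so comparing the multiplicity of each nonzero residue gives, for every $c\in\Z/k$, the identity $\sum_{1\leq j\leq N,\,j\equiv c}r_j=\sum_{1\leq j\leq N,\,j\equiv -c}r_j$.

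Finally I would run this identity for $k=N$, $k=N-1$, and $k=N-2$, all admissible once $N\geq 7$. For $k=N$ a residue $c$ with $1\leq c\leq N-1$ is hit only by $j=c$, so the identity reads $r_c=r_{N-c}$. For $k=N-1$ a residue $c$ with $2\leq c\leq N-2$ is hit only by $j=c$, and feeding the resulting relations into this palindrome forces $r_2=r_3=\cdots=r_{N-2}$. For $k=N-2$ the residue $2$ is hit by $j=2$ and $j=N$, while the residue $-2\equiv N-4$ is hit only by $j=N-4$ (here $N\geq 7$ guarantees $N-4\geq 3$, that $N-4$ is distinct from $0$ and $2$ modulo $N-2$, and that $2N-6>N$); the identity then gives $r_2+r_N=r_{N-4}=r_2$, so $r_N=0$, contradicting $r_N\geq 1$. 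I expect the main difficulty to be exactly this residue-class bookkeeping: one must pin down which $j\in\{1,\dots,N\}$ land in each class modulo $k$ for $k\in\{N-2,N-1,N\}$ and check that $N\geq 7$ is precisely what keeps the class $-2\bmod (N-2)$ uncontaminated, so that the spurious equation $r_N=0$ can be read off; the reduction steps, especially the connectedness of $M^{\Z_k}$ and the non-occurrence of stabilizers of order exceeding $N$, need only the care already exercised in the proof of Corollary~\ref{same}.
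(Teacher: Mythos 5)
Your argument is correct and follows essentially the same route as the paper: reduce to the multiset identity $\Xi_X\equiv-\Xi_X \bmod k$ via Corollary~\ref{same} and Lemma~\ref{modulo}, identify the distinct weights with $\{1,\dots,N\}$ (Proposition~\ref{distinctweights}, i.e.\ Lemma~\ref{I}), and then derive a contradiction for $N\geq 7$ from the congruences modulo $N$, $N-1$, and $N-2$ — which is precisely the content of parts (1) and (6) of the paper's Lemma~\ref{II}, proved there by the same residue-class bookkeeping ($m_i=m_{N-i}$, $m_2=\cdots=m_{N-2}=m_1+m_N$, and $m_2+m_N=m_{N-4}$ forcing $m_N=0$). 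Your added reduction that any stabilizer order divides some weight at $X$, and the explicit handling of the semifree case, are only making explicit steps the paper leaves implicit.
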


\begin{proof}
Let $X$ and $Y$ be the fixed components. Let $\Xi_X$  denote the
multiset of weights for the isotropy action on the normal bundle to
$X$. By Corollary~\ref{same}, if the action is not semifree, the
multiset of weights for the isotropy action on the normal bundle to
$Y$ is $-\Xi_X$. By Lemma~\ref{modulo}, $\Xi_X=-\Xi_X\mod a$ for
each $a \in \Xi_X$. Finally, since the action is effective, the
weights in $\Xi_X$ are relatively prime. The result now follows
immediately from Lemma~\ref{II}.
\end{proof}

\begin{lemma}\label{II}
Let $W$ be a multiset of natural numbers which are relatively prime.
Assume that $W$ contains exactly $N$ distinct
numbers $a_1 < \dots < a_N$ which have
(non-zero)
multiplicities $m_1,\dots,m_N$, respectively.
Let $-W$ be the multiset of negative integers which
contains $-a_1,\dots,-a_N$ with the same multiplicity.
Assume that $W = -W \mod a_i$ for all $i \in \{1,\dots,N\}$.
Then
\begin{enumerate}
\item $a_i = i$ for all $i \in \{1,\dots,N\}$.
\item If $N = 3$, then $m_2 = m_1$.
\item If $N = 4$, then $m_3 = m_1$ and $m_2 = m_1 + m_4$.
\item If $N = 5$, then $m_4 = m_1 = 2 m_5$ and $m_3 = m_2 = 3 m_5$.
\item If $N = 6$, then $m_2 = m_3 = m_4 = 2 m_1 = 2 m_5 = 2 m_6$.
\item $N \leq 6$.
\end{enumerate}
\end{lemma}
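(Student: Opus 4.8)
The plan is to establish the six parts in order: part (1) is immediate from Lemma~\ref{I}, parts (2)--(5) come from elementary residue bookkeeping, and part (6) from a degree count with cyclotomic polynomials. For part (1): given $i,k\in\{1,\dots,N\}$, the element $-a_i$ occurs in $-W$ (with multiplicity $m_i\ge 1$), so since $W\equiv -W\pmod{a_k}$ there is some $a_j$ occurring in $W$ with $a_i+a_j\equiv 0\pmod{a_k}$; as $a_1,\dots,a_N$ are relatively prime, Lemma~\ref{I} yields $a_i=i$ for every $i$.

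For parts (2)--(5), use $a_j=j$: the hypothesis $W\equiv -W\pmod d$ (for $d\in\{1,\dots,N\}$) says exactly that, for each residue $r$ modulo $d$,
\[
\sum_{\substack{1\le j\le N\\ j\equiv r\ (d)}} m_j\ =\ \sum_{\substack{1\le j\le N\\ j\equiv -r\ (d)}} m_j.
\]
This is vacuous for $d=1,2$, and for each of $N=3,4,5,6$ the equations arising from $d=3,\dots,N$ form a small linear system that one solves by hand. For example, when $N=6$ the congruence mod $6$ forces $m_1=m_5$ and $m_2=m_4$, the congruence mod $5$ forces $m_2=m_3$ and $m_1+m_6=m_4$, and the congruence mod $4$ forces $m_1+m_5=m_3$; these combine to give $m_2=m_3=m_4=2m_1=2m_5=2m_6$. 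The cases $N=3,4,5$ are entirely parallel.

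For part (6), encode $W$ by $P(q)=\sum_{j=1}^N m_j q^{j}\in\Z[q]$ and set $R(q)=q^N P(1/q)=\sum_{j=1}^N m_j q^{N-j}$, of degree $N-1$. Reducing exponents modulo $d$ means working in $\Z[q]/(q^d-1)$, where $q^{-1}=q^{d-1}$, so $W\equiv -W\pmod d$ is equivalent to $P(q)\equiv P(q^{-1})\pmod{q^d-1}$, hence (multiplying by the unit $q^N$) to $q^N P(q)\equiv R(q)\pmod{q^d-1}$. By part (1) this holds for all $d\in\{1,\dots,N\}$, so $q^N P(q)-R(q)$ is divisible by $\operatorname{lcm}_{1\le d\le N}(q^d-1)=\prod_{d=1}^N\Phi_d(q)$, the product of the cyclotomic polynomials $\Phi_1,\dots,\Phi_N$ (using $q^m-1=\prod_{e\mid m}\Phi_e(q)$ and coprimality of distinct cyclotomic polynomials). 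But $q^N P(q)-R(q)$ has leading term $m_N q^{2N}$ with $m_N\ge1$, while $\deg R=N-1<2N$, so it is nonzero of degree exactly $2N$; therefore $\sum_{d=1}^N\phi(d)=\deg\prod_{d=1}^N\Phi_d(q)\le 2N$. Since $\sum_{d=1}^7\phi(d)=18$ and $\phi(d)\ge 2$ for $d\ge3$, for $N\ge7$ we get $\sum_{d=1}^N\phi(d)\ge 18+2(N-7)=2N+4>2N$, a contradiction; hence $N\le 6$.

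The routine parts are the finite linear-algebra computations in (2)--(5) and the totient inequality in (6). The one step that needs a little care is verifying that the multiset congruence $W\equiv -W\pmod d$ is exactly the polynomial congruence $q^N P(q)\equiv R(q)\pmod{q^d-1}$ --- i.e.\ that passing to nonnegative residues is compatible with the identity $q^{-1}=q^{d-1}$ in $\Z[q]/(q^d-1)$ --- together with the evaluation $\operatorname{lcm}_{1\le d\le N}(q^d-1)=\prod_{d=1}^N\Phi_d(q)$; this is the main, though modest, obstacle.
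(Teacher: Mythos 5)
Your proposal is correct. Parts (1)--(5) are in substance the same argument as the paper's: claim (1) via Lemma~\ref{I}, and then residue-counting linear relations among the $m_j$. The paper organizes (2)--(5) slightly more uniformly -- the congruences mod $N$ and mod $N-1$ give $m_i=m_{N-i}$ and $m_2=m_3=\dots=m_{N-2}=m_1+m_N$ for all $N>3$ at once, with one extra congruence (mod $3$ for $N=5$, mod $4$ for $N=6$) finishing each case -- whereas you work each $N$ by hand and only display $N=6$; the omitted cases $N=3,4,5$ are routine and do check out. The genuine divergence is in part (6). The paper stays elementary: for $N>6$ the congruence mod $N-2$ gives $m_2+m_N=m_{N-4}$, while mod $N-1$ already forces $m_2=m_{N-4}=m_1+m_N$, so $m_N=0$, contradicting that the multiplicities are nonzero. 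Your route instead encodes $W$ in the polynomial $P(q)=\sum m_j q^j$, translates each congruence $W\equiv -W \pmod d$ into $(q^d-1)\mid q^NP(q)-R(q)$ with $R(q)=q^NP(1/q)$, and bounds $\sum_{d=1}^N\varphi(d)\le \deg\big(q^NP(q)-R(q)\big)=2N$ via $\operatorname{lcm}_{d\le N}(q^d-1)=\prod_{d\le N}\Phi_d(q)$; the translation and the lcm evaluation are both correct, and the totient estimate rules out $N\ge 7$. This costs more machinery than the paper's one-line contradiction, but it buys a cleaner explanation of why $6$ is the exact threshold (equality $\sum_{d=1}^{6}\varphi(d)=12=2\cdot 6$) and a template that would apply to similar symmetric-multiset problems.
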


\begin{proof}
The first claim follows immediately from Lemma~\ref{I}.
Now, the fact that $W = - W \mod N$ implies that
$m_i = m_{N-i}$ for all  $i \in \{1,\dots,N-1\}.$
Moreover, if $N > 3$,
the fact that $W = -W \mod (N-1)$ implies
$m_1 + m_N = m_{N-2}$ and $m_i = m_{N-i-1}$ for all $ i   \in \{2,\dots,N-3\}.$
Therefore,
\begin{gather}
\label{mod1}
m_1 = m_{N-1}, \quad \mbox{and} \\
\label{mod2}
m_2 = m_3 = \dots = m_{N-2} = m_1 + m_N \quad \forall \ N >  3.
\end{gather}

Claim (2) follows immediately from \eqref{mod1}, while claim (3)
follows immediately from \eqref{mod1} and \eqref{mod2}. If $N = 5$,
then since $W = -W \mod 3$,  $m_1 + m_4 = m_2 + m_5$.  Claim (4)
follows immediately from this fact and \eqref{mod1} and
\eqref{mod2}. Similarly, if  $N = 6$, then since   $W = - W \mod 4$,
$m_3 = m_1 + m_5$; claim (5) follows easily. Finally, if $N
> 6$, the fact that $W = -W \mod (N-2)$ implies that $m_2 + m_N =
m_{N-4}$, which contradicts \eqref{mod2}. This proves the last
claim.
\end{proof}

\end{appendix}


\begin{thebibliography}{99}
\bibitem{Ah} K. Ahara, {\em $6$-dimensional almost complex $S^1$
manifolds with $\chi(M)=4$}, J. Fac. Sci. Univ. Tokyo Sect. IA Math.
{\bf 38} (1991), no. 1, 47-72.
\bibitem{AO} K. Ahara and K. Ohba, {\em On $6$-dimensional $S^1$
symplectic Hamiltonian manifolds with Euler number $4$},  Kyoto
University Research Information Repository {\bf 793} (1992),
149-168, http://hdl.handle.net/2433/82715.
\bibitem{De} T. Delzant, {\em Hamiltoniens p$\acute e$riodique et images convexes de
      lapplication moment}, Bull. Soc. Math. France, {\bf 116} (1998), 315-339.
\bibitem{Dej} I. J. Dejter, {\em Smooth $S^1$ manifolds in the
homotopy type of $\CP^3$}, Michigan Math. J. {\bf 23} (1976), 83-95.
\bibitem{Go} E. Gonzalez,  {\em Classifying semi-free Hamiltonian $S^1$-manifolds},
Version 1, arxiv: math/0502364v1.
\bibitem{Ha} A. Hattori, {\em $S^1$ actions on unitary manifolds and quasi-ample line bundles},
 J. Fac. Sci. Univ. Tokyo Sect. IA, Math. {\bf 31} (1984), 433-486.
\bibitem{HH} J.-C. Haussmann and T. Holm, {\em Simple Hamiltonian manifolds}, arXiv: 1012.4740v1.
\bibitem{K} F. Kirwan, {\em Cohomology of Quotients in Symplectic and Algebraic Geometry},
 Princeton University Press, 1984.
\bibitem{LOS} H. Li, M. Olbermann and D. Stanley, {\em One connectivity and finiteness of Hamiltonian $S^1$-manifolds  with minimal fixed sets}, arXiv:1010.2525.
\bibitem{Ma1} M. Masuda, {\em On smooth $S^1$-actions on cohomology
complex projective spaces. The case where the fixed point set
consists of four connected components}, J. Fac. Sci. Univ. Tokyo
{\bf 28} (1981), 127-167.
\bibitem{Ma3} M. Masuda, {\em $S^1$ actions on twisted $\CP^3$}, J.
Fac. Sci. Univ. Tokyo Sect. IA Math. {\bf 31} (1984), no. 1, 1-31.
\bibitem{Mc} D. McDuff, {\em Some $6$-dimensional Hamiltonian $S^1$ manifolds},
J. of Topology, {\bf 2} (2009), no. 3, 589-623.
\bibitem{Mo} D. Morton, {\em GKM manifolds with minimal Betti numbers}, preprint.
\bibitem{Mu} O. R. Musin, {\em Unitary actions of $S^1$ on complex
projective spaces}, Russian Math. Surveys, {\bf 33} (1978), no. 6 (204), 225-226.
\bibitem{P1} T. Petrie, {\em Smooth $S^1$ actions
on homotopy complex projective spaces and related topics},
 Bull. Amer. Math. Soc. 78 (1972), 105-153.
\bibitem{P2} T. Petrie, {\em Exotic $S^1$ actions on $\CP^3$
and related topics}, Invent. Math. {\bf 17} (1972), 317-327.
\bibitem{T} S. Tolman, {\em On a symplectic generalization of Petrie's
      conjecture}, Trans. Amer. Math. Soc., {\bf 362} (2010), 3963-3996.
\bibitem{TW} S. Tolman and J. Weitsman, {\em The cohomology rings of
     symplectic quotients}, Comm. Anal. Geom. {\bf 11} (2003), no. 4, 751-773.
\bibitem{TsWa} E. Tsukada and R. Washiyama, {\em Smooth
$S^1$-actions on cohomology complex projective spaces with three
components of the fixed point set}, Hiroshima Math. J. {\bf 9} (1979), no.
1, 41-46.
\bibitem{W} K. Wang, {\em Differential circle group actions on homotopy complex projective spaces},  Math. Ann. {\bf 214} (1975), 73-80.
\bibitem{Yo76} T. Yoshida, {\em On smooth semi-free $S^1$ actions on cohomology complex projective spaces}, Publ. Res. Inst. Math. Sci. {\bf 11} (1976), 483-496.
\bibitem{Yo77} T. Yoshida, {\em On $S^1$ actions on cohomology
complex projective spaces}, S\^{u}gaku {\bf 29} (1977), no. 2, 154-164.
 \end{thebibliography}
\end{document}